\renewcommand*\env@matrix[1][\arraystretch]{%
  \edef\arraystretch{#1}%
  \hskip -\arraycolsep
  \let\@ifnextchar\new@ifnextchar
  \array{*\c@MaxMatrixCols c}}
\let\euscr\mathscr \let\mathscr\relax
\newtheorem*{examp*}{Example}
\newtheorem*{prob*}{Problem}
\newtheorem{example}{Example}
\newtheorem*{solution*}{Solution}
\newtheorem*{sol*}{Solution}
\newtheorem*{GOCP*}{GOCP}
\newtheorem*{EOCP}{ESIHLQR}
\newtheorem{theorem}{Theorem}
\newtheorem{problem}[theorem]{Problem}
\newtheorem{corollary}[theorem]{Corollary}
\newtheorem{lemma}[theorem]{Lemma}
\newtheorem{remark}[theorem]{Remark}
\newtheorem{cop*}{Convex Optimization Problem}
\newtheorem*{ihlqr*}{IHLQR}
\newtheorem{definition}[theorem]{Definition}
\newtheorem{proposition}[theorem]{Proposition}
\begin{document}
\title{
Suboptimal Consensus Protocol Design for a Class of Multiagent Systems 
}
\author{Avinash Kumar and Tushar Jain 
\IEEEmembership{Senior Member, IEEE  } 
\thanks{
Avinash Kumar and Tushar Jain are with Indian Institute of Technology Mandi, School of Computing and Electrical Engineering, Kamand, Himachal Pradesh - 175075, India (e-mail: d16005@students.iitmandi.ac.in, tushar@iitmandi.ac.in).}}
\maketitle
\begin{abstract}
This article presents a new technique for suboptimal consensus protocol design for a class of multiagent systems. The technique is based upon the extension of newly developed sufficient conditions for suboptimal linear-quadratic optimal control design, which are derived in this paper by an explication of a noniterative solution technique of the infinite-horizon linear quadratic regulation problem in the Krotov framework. 
%
For suboptimal consensus protocol design, the structural requirements on the overall feedback gain matrix, which are inherently imposed by agents dynamics and their interaction topology, are recast on a specific matrix introduced in a suitably formulated convex optimization problem. As a result, preassigning the identical feedback gain matrices to a network of homogeneous agents, which acts 
on the relative state variables with respect to their neighbors is not required. The suboptimality of the computed control laws 
is quantified by implicitly deriving an 
upper bound on the cost in terms of the solution of 
a convex optimization problem and initial conditions instead of specifying it \textit{a priori}.
Numerical examples are provided to demonstrate the implementation of  proposed approaches and their comparison with existing methods in the literature.
\end{abstract}
\begin{IEEEkeywords}
 Krotov framework, Suboptimal control, Linear time-invariant systems, Linear matrix inequalities (LMIs), multiagent systems, Suboptimal consensus protocol, Linear-quadratic regulation (LQR).
\end{IEEEkeywords}


\section{Introduction}
\label{sec1_intro}

\IEEEPARstart{R}{ecently}, control design for multiagent systems has gained utmost interest amongst theorists and practitioners in different forms of achieving the application-dependent control objectives. One of the most important problems in coordination control of multiagent systems is the consensus protocol design problem, also known as the agreement problem. This problem has been widely addressed in the literature see, for instance, \cite{ren2005survey,borrelli2008dist, mesbahi2010graph, cao2010optimal, zhang2015design}. The reason for this exploration is that this problem serves as the background for developing control algorithms for more involved problems like formation control, synchronization, vehicle platooning, and flocking. The consensus problem requires the agents to appear at a common value (agreement) via exchange of relative and/or local information. In an optimal consensus design problem, the agents require to achieve agreement while optimizing a given objective function. Since only the relative and/or local information is shared among the agents, the control input becomes restrictive due to additional  
structural requirements on the overall feedback gain matrix dictated by the topology of agent interaction and individual agent dynamics. 
These structural conditions render the underlying control optimization problem as nonconvex. It is, therefore difficult, if not impossible, to find a closed-form solution for the optimal controller, or it may not even exist \cite{jiao2019distributed, jiao2019suboptimality}. 
The generic optimal consensus protocol design (GOCPD) problem is as stated below for linear agents with a quadratic cost.

\begin{problem}[GOCPD]\label{sec2_ocnp}
Consider a group of $N$ agents with the individual dynamics
$
\mathbf{\dot{x}}_i= A_i \mathbf{x}_i + B_i \mathbf{u}_i, i = 1,2, \hdots, N,
$
$\mathbf{x}_i(t_0)=\mathbf{x}_i{_0}, \mathbf{x}_i \in \mathbb{R}^{n}$, $\mathbf{u}_i \in \mathbb{R}^{m}$ with the pair $(A_i, B_i)$ denoting their distribution matrix pair of appropriate dimensions, which
communicates over a given bidirectional network topology. Let $\mathcal{N}_i$ denote the neighborhood of agent $i$. 
Compute a distributed diffusive control law of the form
\begin{equation}\label{sec2_ocn_ip}
\mathbf{u}_i= f_i (\mathbf{x}_i- \mathbf{x}_j) ,
\end{equation}
where $j \in \mathcal{N}_i$, and $f_i(\bullet)$ denotes a vector-valued linear function of relative states,
such that the cost 
\begin{multline}\label{sec2_ocn_cost}
\hat{J}= \bigintss_{t_0}^{\infty}\left( \frac{1}{2}\sum_{i =1}^{N}\sum_{j \in \mathcal{N}_i} \left( \mathbf{x}_i-\mathbf{x}_j\right) ^T Q_{ij} \left( \mathbf{x}_i-\mathbf{x}_j \right)\right.\\ 
 + \left.\sum_{i=1}^N  \mathbf{u}_i^T R_i \mathbf{u}_i \right) dt, 
 \end{multline}%
where $Q_{ij} = Q_{ij}^T = \underline{Q} \succeq \mathbf{0}, \forall  i,j, R_i =R_i^T=\underline{R} \succ \mathbf{0}, \forall  i$
is minimized, and the consensus is achieved, i.e., $$ \lim_{t \rightarrow \infty} ||\mathbf{x}_i(t)-\mathbf{x}_j(t)|| \rightarrow {0}, \forall i,j \in \{1,2, \ldots, N\} \text{ with } i \neq j.$$ 
\end{problem}
Due to the inherent nature of the required control law \eqref{sec2_ocn_ip}, Problem \ref{sec2_ocnp} is not tractable, except for the case of single integrator agents communicating over a complete graph topology for which the closed-form expression for the optimal controller can be computed \cite{cao2010optimal, kumar2020alternative}.
A suboptimal version of Problem \ref{sec2_ocnp} is usually tackled in the literature where consensus protocols are computed so that the cost $\hat{J}$ is bounded and for an \textit{a priori} given parameter $\gamma>0$ satisfies $\hat{J}< \gamma$. The techniques employed in the literature are based upon the suboptimal linear-quadratic control design \cite{jiao2019suboptimality, nguyen2015sub, zeng2017structured}. Typically,  an identical feedback gain matrix for a network of homogeneous agents is computed, and subsequently, the control law \eqref{sec2_ocn_ip} is determined using its Kronecker product with the Laplacian matrix satisfying the network-imposed structural requirement. 
In \cite{jiao2019suboptimality}, this methodology is used to develop sufficient conditions for homogeneous multiagent systems. In \cite{borrelli2008dist}, the relationship between the stability of a large-scale system and the sparsity pattern of the feedback matrix for the considered problem of identical agents is studies, while the identical local feedback matrices are computed using the LQR-based approach for solving the GOCPD tackling the problem.
Numerous other approaches have been proposed in the literature to address the consensus problem. One of the approaches is to decompose the overall control signal in local and global control signals. In \cite{nguyen2015sub}, a hierarchical LQR based approach is developed to design a consensus protocol with two terms- local and global. In \cite{qiu2016distributed}, this technique was used to design a control input containing three terms: local information, local projection, and local sub-gradient with the assumption of a uniformly jointly connected communication network and bounded time-varying edge weights. An LMI based approach was proposed in \cite{semsar2009optimal} after proving that solution of the Riccati equation cannot be used to compute a solution that achieves consensus based upon the partition of the state space into consensus space and orthonormal subspace. In \cite{xie2019global}, global consensus protocols are designed where the agents are integrators and have individual objective functions known only to themselves under the assumption of strongly connected topology. In \cite{wang2021accelerated}, an iterative accelerated optimal consensus algorithm is developed by decomposing the problem into two independent sub-problems. An iterative procedure for computing the distributed suboptimal controller for discrete-time multiagent systems with the subsystems having identical decoupled linear dynamics is also developed in \cite{zhang2015design}. Using the receding-horizon control technique, the suboptimal controllers for second-order linear and nonlinear multiagent systems are computed in \cite{zhang2017predictive}. With a certain level of $\mathcal{H}_2$ or $\mathcal{H}_\infty$ performance, the consensus protocols have also been synthesized in the literature, for example, in \cite{li2011h, deshpande2012sub, jiao2021h2}.

As seen above that majorly the suboptimal consensus protocol design problem for a multiagent system is addressed by solving the suboptimal LQR problem for a single agent, which implicitly requires solving the algebraic Riccati equation either using some iterative procedure or noniteratively. The  standard  results  on  suboptimal  control  design  for  linear  systems  with  the quadratic cost can be found in \cite{trentelman2012control} and \cite{iwasaki1994linear}. In the noniterative procedure, it also requires an \textit{a priori} knowledge of the upper bound $\gamma$, which is not trivial to specify given that the optimal cost is not known. Moreover, for a network of homogeneous agents, the overall feedback matrix is computed as the Kronecker product of the Laplacian matrix and an identical feedback gain for all agents. Since the optimal cost is unknown at the outset and under the given initial condition of agents, the optimal control problem  may become infeasible. Also, specifying the identical feedback gain for all agents may negatively affect the degree of suboptimality.

In this work, firstly, we formulate a \textit{joint problem}, which is defined as to compute the suboptimal consensus protocol for a class of multiagent systems and the upper bound $\gamma$ on the cost $\hat{J}$ \eqref{sec2_ocn_cost}, where the state dynamics of each agent differs in the sense that the state matrix of all agents is identical, but the input distribution matrix. For the case of the homogeneous agents, the mandatory restriction of identical feedback gain for all agents is also removed. The solution to the above problem is derived from the solution of the second joint problem, introduced for a single agent, where the objective is to compute the $\epsilon-$suboptimal control law and the parameter $\epsilon$. As a consequence of the solution to the first problem, we also provide a solution to the problem of determining an upper bound on the cost for any given consensus protocol. 
The solution to aforesaid problems is based upon the explication of a noniterative solution technique for optimal control problems  
in the rather less-explored Krotov framework. Within this framework, the paradigm of a noniterative solution technique for the LQR, linear-quadratic tracking, and bilinear optimal control problems is firstly developed in our previous work \cite{kumar2019some}. In this framework, the optimal control problem is translated into another equivalent optimization problem by a selection of a Krotov function. This equivalent optimization problem is then solved using the so-called Krotov method \cite{Kro95}, which is an iterative algorithm and may not be a practical control approach considering the real-time implementation constraints. 
Nevertheless, the selection of the Krotov function affects the above-mentioned iterative procedure.  
In this work, we propose a new method to obtain a direct
(or noniterative) \textit{suboptimal} solution of the resulting equivalent
optimization problem for multiagent and single-agent systems using Krotov sufficient conditions for global optimality. 
Furthermore, for both the problems, the upper bound on the cost is also computed in comparison to the existing results in the literature, where it is specified \textit{a priori}. 
The contributions of this paper are many folds:
\begin{enumerate}[(i)]
\item New sufficient conditions are derived for obtaining the control laws for the $\epsilon-$suboptimal LQR problem in the Krotov framework. The parameter $\epsilon$ is also computed.
\item A new method is derived to obtain the suboptimal consensus protocol for a class of multiagent systems with linear dynamics in the Krotov framework. The upper bound on the cost functional is also computed implicitly instead of specifying it \textit{a priori}.
\item An upper bound on the cost functional is explicitly computed for any consensus protocol of a multiagent system. This contribution shall be useful for control practitioners.
\item A comparison of proposed methods with state-of-the-art methods is also presented through numerical simulations. 
\end{enumerate}

The rest of the paper is organized as follows. In section \ref{sec2_pmb}, the mathematical background of the Krotov framework is briefly discussed along with the considered problems in this work. Section \ref{sec3_olmi_slqr} presents the new results for suboptimal linear-quadratic control design by proposing suitable Krotov functions. Section \ref{sec4_scpd} presents the new method to compute the 
suboptimal consensus protocol for a class of multiagent systems. In section \ref{sec5_ne}, numerical results are presented. With respect to the existing results in the literature, a comparative analysis is also presented in this section, followed by an application of the proposed method to a practical problem of roller consensus in a paper drying machine with a nonidentical input matrix for all agents. Finally, the concluding remarks and future work  are discussed in section \ref{sec6_conc}.

\section{Preliminaries and Problem formulation}
\label{sec2_pmb}
This section presents a brief overview and important results within the Krotov framework, which are required to solve the problems formulated in the subsequent subsections.
\subsection{Krotov Framework}
\label{sec2_pmb_sub_kf}
The Krotov framework is  based upon the application of the extension principle to optimal control problems. 
\subsubsection{Extension Principle}
\label{sec2_pmb_sub_kf_sub_ep}
The idea of the extension principle is to reformulate the constrained optimization problem as an unconstrained problem with (possibly) a bigger solution space in such a way that the
solution to the latter problem is the same as that of the former problem 
\cite{Kro95,gurman2016certain}.
Consider  a scalar-valued functional $L(\mathbf{v})$ defined over a set $\mathbb{D}$ (i.e. $\mathbf{v} \in \mathbb{D}$), and the optimization problem to be solved is\\
\textit{(Original) Problem:}
 Find $\bar{\mathbf{v}}_d$ such that $d=\inf_{\mathbf{v} \in \mathbb{D}}L(\mathbf{v})$, where $d \triangleq L(\bar{\mathbf{v}}_d) $, and $L(\bullet): \mathbb{D} \rightarrow \mathbb{R}$.
  
 \noindent{Instead of directly solving this problem, another  optimization problem, which is equivalent to the original, is formulated by using the extension principle as} 
 
 \noindent \textit{(Equivalent) Problem :} 
 Find $\bar{\mathbf{v}}_{\tilde{d}}$ such that $\tilde{d}= \inf_{\mathbf{v} \in \tilde{\mathbb{D}}}\tilde{L}(\mathbf{v})$, where $\tilde{d} \triangleq \tilde{L}(\bar{\mathbf{v}}_{\tilde{d}}) $, $\tilde{L}(\bullet): \tilde{\mathbb{D}} \rightarrow \mathbb{R}$, $\tilde{\mathbb{D}} \supseteq \mathbb{D}$ and $\tilde{L}(\mathbf{v})=L (\mathbf{v}) \forall  \mathbf{v}  \in \mathbb{D}$. 

The essence of the extension principle is that the equivalent problem can be easier to solve than the original problem by selecting the non-unique functional $\tilde{L}$. However, its  selection 
and the characterization of the set $\tilde{\mathbb{D}}\supseteq \mathbb{D}$ remain open problems in the literature \cite{kumar2019some}. Applying the above extension principle to a generic optimal control problem, a specific functional is defined, and subsequently, the sufficient conditions of global optimality for solving that generic problem are provided next. 

 \subsubsection{Krotov Sufficient Conditions}
 \label{sec2_pmb_sub_kf_sub_ksc}
Consider the generic optimal control problem stated below.
\begin{problem}
\label{sec2_prob_ocp}
Compute an optimal feedback control law $\mathbf{u}^*(t)$ which minimizes the performance index
\begin{equation}  \label{sec2_gocp_cost}
\begin{gathered}
J(\mathbf{x}(t),\mathbf{u}(t))= l_f(\mathbf{x}(t_f)) + \int_{t_0}^{ t_f} l(\mathbf{x}(t),\mathbf{u}(t),t) dt \\
\text{s.t. } \dot{\mathbf{x}}(t) = f(\mathbf{x}(t),\mathbf{u}(t),t) \ \text{ with } {\mathbf{x}}(t_0) =\mathbf{x}_0, 
\end{gathered}
\end{equation}
where $l(\mathbf{x}(t),\mathbf{u}(t),t)$ is the running cost, $l_f(\mathbf{x}(t_f))$ is the terminal cost, $\mathbf{x}(t) \in  \mathbb{X} \subset \mathbb{R}^n$ is the state vector and $\mathbf{u}(t) \in \mathbb{U} \subset \mathbb{R}^m$ is the control input vector. Also, $l_f : \mathbb{R}^n \rightarrow \mathbb{R} $ and $l : \mathbb{R}^n \times \mathbb{R}^m \times [t_0, t_f] \rightarrow \mathbb{R}$ are continuous, and $\mathbf{x}(t_f) \in \mathbb{X}_f$ with $\mathbb{X}_f$ denote the terminal set. 
\end{problem}
\begin{theorem}[Krotov Theorem] \label{sec2_thm1} 
Let $q(\mathbf{x}(t),t)$ be a piecewise smooth function, denoted as Krotov function. Then, \eqref{sec2_gocp_cost} can be equivalently expressed as 
\begin{equation*}
J_{eq}(\mathbf{x},\mathbf{u};q)= s_f(\mathbf{x}(t_f)) +q(\mathbf{x}_0,t_0)
+ \int_{t_0}^{ t_f} s(\mathbf{x}(t),\mathbf{u}(t),t;q) dt 
\end{equation*}
where
\begin{align*}
s(\mathbf{x},\mathbf{u}(t),t;q) & \triangleq  \frac{\partial q}{\partial t} + l(\mathbf{x}(t),\mathbf{u} (t),t)+\frac{\partial q}{\partial \mathbf{x}} {f} (\mathbf{x}(t), \mathbf{u}(t),t), \\
s_f(\mathbf{x}(t_f);q) &\triangleq l_f(\mathbf{x}(t_f)) -q(\mathbf{x}(t_f),t_f).
\end{align*}
\end{theorem}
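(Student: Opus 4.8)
The plan is to exploit the observation that $J_{eq}$ and $J$ differ only by the time-integral of the total derivative of the Krotov function $q$ along the system trajectory, a term that telescopes into boundary values which exactly cancel the added $q(\mathbf{x}_0,t_0)$ and $-q(\mathbf{x}(t_f),t_f)$ contributions. Concretely, for any admissible pair $(\mathbf{x}(t),\mathbf{u}(t))$ satisfying $\dot{\mathbf{x}}=f(\mathbf{x},\mathbf{u},t)$, I would first apply the chain rule along the trajectory,
\[
\frac{d}{dt}q(\mathbf{x}(t),t) = \frac{\partial q}{\partial t} + \frac{\partial q}{\partial \mathbf{x}}\dot{\mathbf{x}}(t) = \frac{\partial q}{\partial t} + \frac{\partial q}{\partial \mathbf{x}}f(\mathbf{x}(t),\mathbf{u}(t),t),
\]
where the second equality invokes the dynamic constraint. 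Recognizing the right-hand side as precisely $s-l$ identifies $s = l + \tfrac{d}{dt}q$ on trajectories.

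Next I would substitute the definitions of $s$ and $s_f$ into $J_{eq}$ and integrate the total-derivative term by the fundamental theorem of calculus,
\[
\int_{t_0}^{t_f}\Big(\frac{\partial q}{\partial t} + \frac{\partial q}{\partial \mathbf{x}}f\Big)\,dt = \int_{t_0}^{t_f}\frac{d}{dt}q(\mathbf{x}(t),t)\,dt = q(\mathbf{x}(t_f),t_f) - q(\mathbf{x}_0,t_0).
\]
Inserting this into $J_{eq} = l_f(\mathbf{x}(t_f)) - q(\mathbf{x}(t_f),t_f) + q(\mathbf{x}_0,t_0) + \int_{t_0}^{t_f} l\,dt + \big(q(\mathbf{x}(t_f),t_f) - q(\mathbf{x}_0,t_0)\big)$, the four $q$-terms cancel in pairs, leaving $J_{eq} = l_f(\mathbf{x}(t_f)) + \int_{t_0}^{t_f} l\,dt = J$. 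This establishes $J_{eq}(\mathbf{x},\mathbf{u};q)=J(\mathbf{x},\mathbf{u})$ for every choice of $q$ and every admissible trajectory, which is exactly the asserted equivalence.

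The one point requiring care — rather than a genuine obstacle — is the regularity of $q$. Since $q$ is only assumed piecewise smooth, the map $t\mapsto q(\mathbf{x}(t),t)$ need not be continuously differentiable everywhere; however, piecewise smoothness together with continuity makes it absolutely continuous along the continuous trajectory, so the fundamental theorem of calculus still applies after summing the identity over the finitely many smooth pieces, the interior boundary values cancelling in the telescoping sum. I would also emphasize the logical role of the constraint: $\dot{\mathbf{x}}=f$ is used only to replace $\tfrac{\partial q}{\partial \mathbf{x}}\dot{\mathbf{x}}$ by $\tfrac{\partial q}{\partial \mathbf{x}}f$, so the equality $J_{eq}=J$ holds exactly on the admissible set $\mathbb{D}$, whereas $J_{eq}$ remains well defined on the enlarged domain $\tilde{\mathbb{D}}$ where the dynamics are not imposed. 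This is precisely the structure required by the extension principle, namely $\tilde{L}\big|_{\mathbb{D}}=L$, and it is what legitimizes minimizing $J_{eq}$ over the larger space as a surrogate for the original constrained problem.
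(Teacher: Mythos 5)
Your proof is correct and is essentially the standard argument: the paper itself does not reproduce a proof but defers to \cite[Section 2.3]{Kro95}, where the identity $J_{eq}=J$ on admissible processes is established in exactly this way, by recognizing $s-l$ as the total derivative $\tfrac{d}{dt}q(\mathbf{x}(t),t)$ along trajectories and letting the resulting boundary terms cancel against $q(\mathbf{x}_0,t_0)$ and $-q(\mathbf{x}(t_f),t_f)$. Your additional remarks on the piecewise-smooth regularity of $q$ and on the role of the dynamic constraint (equality only on $\mathbb{D}$, well-definedness on $\tilde{\mathbb{D}}$) are accurate and consistent with the extension-principle framing in the paper.
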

Since the function $q$ is non-unique, the representations given in Theorem \ref{sec2_thm1} are also not unique, see \cite[Remark 1]{kumar2019some}.
\begin{definition}[Admissible process]
A process (or pair) $(\mathbf{x}(t), \mathbf{u}(t))$ is admissible whenever it satisfies the dynamical equation $\dot{\mathbf{x}}= f (\mathbf{x},\mathbf{u},t)$ and the state and input constraints.
\end{definition}
\begin{theorem}[Krotov Sufficient Conditions] \label{sec2_thm2} 
If $\left({\mathbf{x}^*(t)},{\mathbf{u}^*(t)}\right)$ is an \textit{admissible process}  such that
\begin{align*}
s(\mathbf{x}^*,\mathbf{u}^*,t;q) &=   \min_{\mathbf{x} \in \mathbb{X}, \mathbf{u} \in \mathbb{U} }  s(\mathbf{x},\mathbf{u},t;q),  \forall t  \in   [t_0,t_f),\\
s_f(\mathbf{x}^*(t_f);q) &= \min_{\mathbf{x} \in \mathbb{X}_f}  s_f(\mathbf{x};q)
\end{align*}
then $(\mathbf{x}^*(t), \mathbf{u}^*(t))$ is an optimal process.
\end{theorem}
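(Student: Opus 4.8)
The plan is to reduce the dynamic optimality claim to a pointwise comparison by exploiting the equivalent representation supplied by Theorem~\ref{sec2_thm1}. First I would fix an arbitrary competing admissible process $(\mathbf{x}(t),\mathbf{u}(t))$ and apply Theorem~\ref{sec2_thm1} to rewrite its cost as
\[
J(\mathbf{x},\mathbf{u}) = J_{eq}(\mathbf{x},\mathbf{u};q) = s_f(\mathbf{x}(t_f);q) + q(\mathbf{x}_0,t_0) + \int_{t_0}^{t_f} s(\mathbf{x},\mathbf{u},t;q)\,dt,
\]
and do likewise for the candidate process $(\mathbf{x}^*,\mathbf{u}^*)$. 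The essential observation is that the middle term $q(\mathbf{x}_0,t_0)$ is identical for both processes, since every admissible process emanates from the fixed initial condition $\mathbf{x}_0$; this constant therefore drops out of the comparison. This is precisely the leverage the extension principle provides: the total-derivative contribution absorbed into $q$ telescopes the dynamics constraint into boundary data, so the integrand $s$ at time $t$ depends only on the instantaneous values $(\mathbf{x}(t),\mathbf{u}(t),t)$ rather than on the trajectory globally, which decouples the optimization across time.

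Next I would apply the two hypotheses termwise. Because $(\mathbf{x}(t),\mathbf{u}(t)) \in \mathbb{X} \times \mathbb{U}$ for every $t$, the first hypothesis yields the pointwise bound $s(\mathbf{x}^*,\mathbf{u}^*,t;q) \le s(\mathbf{x},\mathbf{u},t;q)$ on $[t_0,t_f)$, which integrates to $\int_{t_0}^{t_f} s(\mathbf{x}^*,\mathbf{u}^*,t;q)\,dt \le \int_{t_0}^{t_f} s(\mathbf{x},\mathbf{u},t;q)\,dt$. Similarly, as $\mathbf{x}(t_f),\mathbf{x}^*(t_f) \in \mathbb{X}_f$, the second hypothesis gives $s_f(\mathbf{x}^*(t_f);q) \le s_f(\mathbf{x}(t_f);q)$. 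Adding these two inequalities together with the common constant $q(\mathbf{x}_0,t_0)$ produces $J_{eq}(\mathbf{x}^*,\mathbf{u}^*;q) \le J_{eq}(\mathbf{x},\mathbf{u};q)$, and invoking the equivalence $J = J_{eq}$ on admissible processes once more converts this into $J(\mathbf{x}^*,\mathbf{u}^*) \le J(\mathbf{x},\mathbf{u})$. Since $(\mathbf{x}^*,\mathbf{u}^*)$ is itself admissible and the competing process was arbitrary, this establishes that $(\mathbf{x}^*,\mathbf{u}^*)$ is optimal.

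The step I expect to need the most care is conceptual rather than computational: the minima in the hypotheses are taken over the full sets $\mathbb{X}$, $\mathbb{U}$, and $\mathbb{X}_f$, not merely over values realized along admissible trajectories. This is exactly what makes the termwise comparison legitimate, since the instantaneous state-input pair of any competing admissible process necessarily lies inside those sets and is therefore dominated by the starred values. I would spell this out so that no hidden assumption is smuggled in about the pointwise minimizer lying on an admissible trajectory; admissibility of $(\mathbf{x}^*,\mathbf{u}^*)$ is assumed \emph{separately} in the statement, and it is the conjunction of that admissibility with the pointwise domination that closes the argument. Everything else reduces to monotonicity of the integral and is routine.
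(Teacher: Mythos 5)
Your argument is correct and is essentially the standard proof of Krotov's sufficient conditions: the paper itself does not reproduce a proof but defers to \cite[Section 2.3]{Kro95}, and the argument given there is exactly your decomposition of $J_{eq}$ into the common constant $q(\mathbf{x}_0,t_0)$, the pointwise-minimized integrand $s$, and the terminal term $s_f$, followed by termwise comparison against an arbitrary admissible competitor. Your emphasis on the fact that the minima are taken over all of $\mathbb{X}\times\mathbb{U}$ and $\mathbb{X}_f$ (so that any admissible trajectory's instantaneous values are dominated) together with the separately assumed admissibility of $(\mathbf{x}^*,\mathbf{u}^*)$ is precisely the point on which the sufficiency rests, so nothing is missing.
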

The proof of Theorems \ref{sec2_thm1} and \ref{sec2_thm2} can be found in \cite[Section 2.3]{Kro95}. 
In the literature, the optimal process for linear and nonlinear systems is computed using the so-called Krotov iterative methods, see, e.g., \cite{maximov2010smoothing, Raf18}, yielding a sequence of improving admissible processes because the equivalent optimization problem may be nonconvex, see \cite{kumar2019some} for finite-time LQR problem. Moreover, although it is well-known that the global optimal consensus protocol may not exist for multiagent systems, the Krotov optimal control framework has not been explored for computing even the suboptimal solutions to the best of our knowledge. In this paper, the main objective is to obtain the suboptimal control solutions using Krotov sufficient conditions and via a suitable selection of the Krotov functions for single-agent and multiagent systems.
\begin{definition}[Solving function]
Given a Krotov function $q(\mathbf{x}, t)$, whenever an optimal or suboptimal process is directly (or noniteratively) obtained, then
$q(\mathbf{x}, t)$ is defined as the solving function.
\end{definition}

\subsection{Problem Formulation}
In this paper, the following suboptimal consensus protocol design problem constituting of $N$ agents communicating over a given bidirectional topology is considered.
\begin{problem}[$\gamma-$Suboptimal Consensus Protocol Design]
\label{sec2_scdp}
Given a group of agents with state dynamics of the $i-$th agent as
\begin{equation}\label{eq:agentdynamicsdiffB}
{\dot{\mathbf{x}}}_i= A \mathbf{x}_i+ B_i \mathbf{u}_i , \ i \in \{1,2, \hdots, N\}, \mathbf{x}_i(t_0)=\mathbf{x}_{i0},
\end{equation}
where $\mathbf{x}_i\in\mathbb{R}^n, \mathbf{u}_i\in\mathbb{R}^m$.
Design a $\gamma-$suboptimal consensus control law of the form 
$
\mathbf{u}_i= f_i (\mathbf{x}_{i}-\mathbf{x}_{j}), \forall j\in\mathcal{N}_i. 
$
Moreover, compute the upper bound $\gamma>0$ on the cost functional $\hat{J}$ in \eqref{sec2_ocn_cost}. 
\end{problem}
Numerous works in the literature, typically, consider identical dynamics for all the agents \cite{borrelli2008dist, cao2010optimal,gupta2005sub,jiao2018suboptimality}, unlike the dynamics considered in Problem \ref{sec2_scdp}, which is practically more relevant than the former. For example, any two DC motors of identical ratings may have the same time constant in the first-order transfer function but different rotor/stator gains \cite{golnaraghi2017automatic}. Secondly, for $\gamma-$suboptimal control design procedures in the literature, the information of $\gamma$ is assumed to be known \textit{a priori}. 
In this work, these assumptions are not considered at the outset. In fact, distinct feedback gain for all agents offers inherent flexibility to the degree of suboptimality in the sense that the computed cost may be less than the cost computed considering the identical feedback gain for all agents. Moreover, prespecifying injudiciously the upper bound on the cost functional may yield no solution to the suboptimal control problem for some given initial conditions of the state variables.  
Otherwise, in order to obtain the solution for a given upper bound, it may restrict the initial conditions of agents since in the existing literature, once such a feedback gain is computed ensuring the consensus, there lies a strong dependency between the \textit{a priori} given upper bound and a ball containing the initial conditions of agents. In this work, the former benefit of not adhering to identical feedback gains for a network of homogeneous agents  and the latter issue of strong dependency are numerically demonstrated 
in the simulation section with regards to the existing state-of-the-art approaches.


The solution to the above problem shall be computed by 
reformulating the overall problem from an LQR perspective, where the cost function to be minimized is reformulated in terms of the error dynamics between the states of interconnected agents 
subject to the new admissible process. 
In this context, to investigate the suboptimal solutions for a multiagent system, it requires firstly to determine the suboptimal solutions to the classical infinite-horizon (IH) LQR control problem for linear systems or single-agent systems. Several results on the latter problem are broadly gathered in \cite{trentelman2012control,kirk2004optimal}. In this paper, new conditions are derived for computing the suboptimal solutions to the IHLQR problem using the Krotov sufficient conditions, which is another significant contribution of this work. These conditions shall then be used to address Problem \ref{sec2_scdp}.
\begin{definition}[$\epsilon- $suboptimality]
For the IHLQR problem, an admissible process  $(\mathbf{x}(t),\mathbf{u}(t))$ is defined as an $\epsilon$-suboptimal process and subsequently, a control law of the form
\begin{align}\label{eq:controllaw}
\mathbf{u}= -\frac{1}{2} R^{-1}B^T \left(P+P^T \right) \mathbf{x} 
\end{align}
is called $\epsilon$-suboptimal whenever $P \in \mathcal{P}$ where 
$
    \mathcal{P} = \left\lbrace P |J(\mathbf{u})< J^*+\epsilon\right\rbrace,
    $
with 
\begin{align}\label{eq:ihlqrcost}
     J(\mathbf{u})= \int_{t_0}^{\infty} \left(\mathbf{x}^TQ \mathbf{x}+\mathbf{u}^T R \mathbf{u}  \right) dt,
\end{align}
and $J^*$ denoting the optimal cost.
\end{definition}
\begin{problem}[$\epsilon$-suboptimal LQR Design]
\label{sec2_p3}
Consider the system 
\begin{align}\label{eq:singleagent}
    \mathbf{\dot{x}}=A \mathbf{x}+B \mathbf{u}, \mathbf{x}(t_0)=\mathbf{x}_0
\end{align}
    with cost functional \eqref{eq:ihlqrcost}. Compute 
\begin{enumerate}
    \item an $\epsilon-$suboptimal control law \eqref{eq:controllaw}, and 
    \item the parameter $\epsilon$.
\end{enumerate}
\end{problem}
Note that in the literature, the solution to the Problem \ref{sec2_p3} is majorly presented by prespecifying the upper bound on the cost, which may be infeasible whenever the upper bound is injudiciously selected. Nevertheless, the optimal cost is typically known in the Problem \ref{sec2_p3} unlike in the Problem \ref{sec2_scdp}.  In this work, we explicitly characterize this upper bound while computing the suboptimal controller.



\section{Suboptimal LQR Design}
\label{sec3_olmi_slqr}
Using Krotov sufficient conditions, 
solving functions are proposed in \cite{kumar2019some} to compute the optimal process for the finite- and infinite-horizon LQR problem. In this section, we shall compute the new suboptimal control law and an upper bound on the cost functional for the IHLQR problem. Prior to that, we will derive the results for computing the $\epsilon-$suboptimal process, given the parameter $\epsilon>0$.
The equivalent optimization problem of the suboptimal IHLQR (SIHLQR) problem, as per Theorems \ref{sec2_thm1} and \ref{sec2_thm2}, is stated as 
\begin{EOCP}[Equivalent SIHLQR problem] 
\label{sec3_eihlqr}
Given $\epsilon>0$. Find an $\epsilon-$suboptimal process 
which
\begin{enumerate}
    \item $\min\limits_{\mathbf{(x,u)} \in \mathbb{R}^n\times  \mathbb{R}^m }  s(\mathbf{x}(t),\mathbf{u}(t),t;q)$, where
    \begin{equation}\label{eq:funcs}
s=\frac{\partial q}{\partial t}+\dfrac{\partial q}{\partial \mathbf{x}} \left(A\mathbf{x}+B \mathbf{u} \right) +  \left(\mathbf{x}^T Q \mathbf{x} + \mathbf{u}^T R \mathbf{u} \right),
\end{equation}
\item $\min\limits_{\lim\limits_{t_f \rightarrow \infty}\mathbf{x}(t_f)} s_f(\mathbf{x}(t_f))$, where $s_f = -q(\mathbf{x}(t_f),t_f)$.
\end{enumerate}
\end{EOCP}
\begin{proposition}\label{prop:convexs}
For the ESIHLQR, let the Krotov function be chosen as $q(\mathbf{x},t)=\mathbf{x}^TP\mathbf{x}$. The function $s(\mathbf{x},\mathbf{u},t)$ is a strictly convex function in $(\mathbf{x},\mathbf{u})$ if and only if the matrix $P$ satisfies
\begin{align}\label{sec3_l1_p_lmi}
\begin{bmatrix}
 \Gamma(P) &  \left(P +P^T\right)B \\
 B^T\left( P+P^T\right)  & 4R 
\end{bmatrix}   \succ \mathbf{0},
\end{align}
where $\Gamma(P)= A^TP+PA+Q$.
\end{proposition}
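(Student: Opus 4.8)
The plan is to reduce the claim to a standard fact: a quadratic form is strictly convex exactly when the symmetric matrix representing it is positive definite. First I would substitute the candidate Krotov function $q(\mathbf{x},t)=\mathbf{x}^TP\mathbf{x}$ into the definition \eqref{eq:funcs} of $s$. Since $q$ carries no explicit time dependence, $\partial q/\partial t=0$, and its gradient is $\partial q/\partial\mathbf{x}=\mathbf{x}^T(P+P^T)$ (the factor $P+P^T$ appearing because $P$ is not assumed symmetric). This gives
\[
s(\mathbf{x},\mathbf{u})=\mathbf{x}^T(P+P^T)(A\mathbf{x}+B\mathbf{u})+\mathbf{x}^TQ\mathbf{x}+\mathbf{u}^TR\mathbf{u},
\]
a homogeneous quadratic form in $(\mathbf{x},\mathbf{u})$, so that $s=\mathbf{z}^T\mathcal{H}\mathbf{z}$ with $\mathbf{z}=(\mathbf{x},\mathbf{u})$ for a unique symmetric matrix $\mathcal{H}$.

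Next I would read off the blocks of $\mathcal{H}$. The $\mathbf{u}\mathbf{u}$-block is $R$, the $\mathbf{x}\mathbf{u}$-block is $\tfrac12(P+P^T)B$, and the $\mathbf{x}\mathbf{x}$-block is the symmetric part of $(P+P^T)A+Q$. The only nonroutine algebraic step is simplifying this last block through the identity $(P+P^T)A+A^T(P+P^T)+2Q=\Gamma(P)+\Gamma(P)^T$, which makes the $\mathbf{x}\mathbf{x}$-block equal to $\tfrac12\bigl(\Gamma(P)+\Gamma(P)^T\bigr)$. By the standard fact, strict convexity of $s$ is then equivalent to $\mathcal{H}\succ\mathbf{0}$.

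Finally I would connect $\mathcal{H}\succ\mathbf{0}$ to the LMI \eqref{sec3_l1_p_lmi}. Because $P$ need not be symmetric, $\Gamma(P)$ need not be symmetric either, so the matrix in \eqref{sec3_l1_p_lmi} is read with $\succ\mathbf{0}$ meaning that its symmetric part is positive definite; writing $\Gamma$ for $\Gamma(P)$, this symmetric part is
\[
\begin{bmatrix}\tfrac12(\Gamma+\Gamma^T)&(P+P^T)B\\ B^T(P+P^T)&4R\end{bmatrix}.
\]
I would then exhibit the invertible congruence $D=\mathrm{diag}(I_n,2I_m)$ and verify that $D^T\mathcal{H}D$ equals this symmetric part. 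Since a congruence preserves inertia, $\mathcal{H}\succ\mathbf{0}$ holds if and only if \eqref{sec3_l1_p_lmi} does, which yields both directions of the equivalence at once.

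The step needing the most care is the bookkeeping of symmetrizations and factors of $2$: that $\partial q/\partial\mathbf{x}=\mathbf{x}^T(P+P^T)$ rather than $2\mathbf{x}^TP$, that the Hessian equals $2\mathcal{H}$, and that the entry $4R$ (rather than $R$) is exactly what forces the scaling $\mathrm{diag}(I_n,2I_m)$ in the congruence. The one conceptual subtlety is the nonsymmetric $P$: it explains why $\Gamma(P)$ appears unsymmetrized in \eqref{sec3_l1_p_lmi} and why $\succ\mathbf{0}$ there must be interpreted via the symmetric part. Everything else is routine quadratic-form algebra.
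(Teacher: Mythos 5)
Your proposal is correct, and it carries out exactly the computation the paper delegates to the citation of \cite[Section II.B]{kumar2019some}: substitute $q=\mathbf{x}^TP\mathbf{x}$ into $s$, identify the symmetric matrix of the resulting quadratic form in $(\mathbf{x},\mathbf{u})$, and match it to \eqref{sec3_l1_p_lmi} by the congruence $\mathrm{diag}(I_n,2I_m)$, which correctly accounts for the $4R$ block and the unsymmetrized $\Gamma(P)$. Since the paper's own proof is a one-line instantiation of the cited result, your write-up supplies the details it omits rather than taking a different route.
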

\begin{proof}
By instantiation of the results in \cite[Section II.B]{kumar2019some} for infinite-horizon.
\end{proof}
\begin{theorem}
Let $\Upsilon(\Psi,\Phi)=(\Psi+\Psi^T ) \Xi ( \Phi+\Phi^T )$, where $\Xi=\Xi^T\succeq \mathbf{0}$ is symmetric. The following statements are true.
\begin{enumerate}
    \item $\Upsilon(\Psi,\Phi)^T = \Upsilon(\Phi,\Psi)$.
     \item In addition, if $\Psi = \Phi$, then $\Upsilon(\Phi,\Phi)\succeq \mathbf{0}$. 
\end{enumerate}
\end{theorem}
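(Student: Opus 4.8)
The plan is to handle the two claims separately; both are elementary consequences of transpose algebra together with a congruence argument, so I would not expect either to require more than a few lines. I would start with the transpose identity in statement~(1). Applying the transpose to the defining product and using that $(XYZ)^T = Z^T Y^T X^T$, I get $\Upsilon(\Psi,\Phi)^T = (\Phi+\Phi^T)^T\,\Xi^T\,(\Psi+\Psi^T)^T$. Each of the three factors then simplifies individually: the outer factors are symmetric by construction, since $(\Phi+\Phi^T)^T = \Phi+\Phi^T$ and $(\Psi+\Psi^T)^T = \Psi+\Psi^T$, while $\Xi^T=\Xi$ by hypothesis. Substituting these back reproduces $(\Phi+\Phi^T)\Xi(\Psi+\Psi^T)$, which is exactly $\Upsilon(\Phi,\Psi)$, establishing (1).

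For statement~(2), I would specialise $\Psi=\Phi$ and abbreviate $M \triangleq \Phi+\Phi^T$, a symmetric matrix. The expression becomes $\Upsilon(\Phi,\Phi)=M\Xi M$, and two properties remain to be verified: symmetry and positive semidefiniteness. Symmetry is immediate from (1) by setting $\Psi=\Phi$, which yields $\Upsilon(\Phi,\Phi)^T=\Upsilon(\Phi,\Phi)$. For semidefiniteness I would apply the quadratic-form test: for an arbitrary vector $\mathbf{v}$, write $\mathbf{v}^T M \Xi M \mathbf{v} = (M\mathbf{v})^T\,\Xi\,(M\mathbf{v})$, where I have used $M=M^T$ to absorb the left-hand $M$ into the vector $M\mathbf{v}$. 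Since $\Xi\succeq\mathbf{0}$, this quantity is nonnegative for every $\mathbf{v}$, and hence $\Upsilon(\Phi,\Phi)\succeq\mathbf{0}$.

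The calculations are routine, so I do not anticipate a genuine obstacle; the only point meriting care is recognising $M\Xi M$ as a congruence transformation $M\Xi M^T$ of $\Xi$ by the symmetric matrix $M$, which is precisely the operation that preserves positive semidefiniteness. I read this result as a preparatory algebraic lemma whose payoff appears later: statement~(1) lets cross terms of the form $(\Psi+\Psi^T)\Xi(\Phi+\Phi^T)$ be paired with their transposes, and statement~(2) certifies sign-definiteness of the diagonal blocks, which is the kind of structure needed to handle objects such as the off-diagonal coupling $(P+P^T)B$ and the LMI~\eqref{sec3_l1_p_lmi} appearing in Proposition~\ref{prop:convexs}.
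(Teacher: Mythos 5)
Your proposal is correct and follows essentially the same route as the paper: part (1) by reversing the transpose of the triple product and using symmetry of the outer factors and of $\Xi$, and part (2) by recognising $(\Phi+\Phi^T)\Xi(\Phi+\Phi^T)$ as a congruence of $\Xi\succeq\mathbf{0}$. The paper's own proof is simply a terser version of the same argument, leaving the quadratic-form step implicit.
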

\begin{proof}
$\Upsilon(\Psi,\Phi)^T=\left((\Psi+\Psi^T ) \Xi ( \Phi+\Phi^T ) \right)^T= (\Phi+\Phi^T) \Xi ( \Psi+ \Psi^T )= \Upsilon(\Phi, \Psi)$. This completes the proof of $1)$.

With $\Psi= \Phi$, $\Upsilon(\Phi,\Phi)=(\Phi+\Phi^T) \Xi (\Phi+ \Phi^T)$ and since $\Xi \succeq \mathbf{0}$, $\Upsilon \succeq \mathbf{0}$.
\end{proof}   
\begin{proposition}
The function $q=\mathbf{x}^TP\mathbf{x}$ is a solving function for ESIHLQR. 
Moreover, the closed-loop system with the control law \eqref{eq:controllaw}
is stable and the law is $\epsilon-$suboptimal if the following conditions are satisfied for some $P$
\begin{enumerate}
    \item $s(\mathbf{x},\mathbf{u},t;q)$ in \eqref{eq:funcs} is a strictly convex function in $(\mathbf{x},\mathbf{u})$,
\item $A-\frac{1}{2}\Upsilon(I,P)
$ is Hurwitz, where 
$\Xi = BR^{-1}B^T$
with $I$ denoting the identity matrix of appropriate dimension,
    \item $\mathbf{x}_0^T (P+\check{P}) \mathbf{x}_0<J^*+\epsilon$, where $\int_{t_0}^{\infty}sdt = \mathbf{x}_0^T\check{P}\mathbf{x}_0$.
\end{enumerate}
\end{proposition}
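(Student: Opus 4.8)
The plan is to lean on the \emph{exact} cost identity supplied by the Krotov equivalence (Theorem~\ref{sec2_thm1}) and then extract the solving-function claim, stability, and the suboptimality bound from the three hypotheses in order. First I would insert the time-independent choice $q(\mathbf{x},t)=\mathbf{x}^TP\mathbf{x}$ into the definitions of $s$ and $s_f$, so that $\partial q/\partial t=0$ and $\partial q/\partial \mathbf{x}=\mathbf{x}^T(P+P^T)$. Writing the equivalent functional over a finite horizon $t_f$ and letting $t_f\to\infty$ produces the identity $J(\mathbf{u})=q(\mathbf{x}_0)-q(\mathbf{x}(\infty))+\int_{t_0}^{\infty}s\,dt$. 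Crucially, this holds for \emph{every} admissible process, since Theorem~\ref{sec2_thm1} is an equality rather than an optimality statement, so it applies verbatim to the closed-loop trajectory generated below.

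Next I would produce the candidate process directly from $q$. By Proposition~\ref{prop:convexs}, condition~$1)$ renders $s$ strictly convex in $(\mathbf{x},\mathbf{u})$, hence for each fixed $\mathbf{x}$ the stationarity condition $\partial s/\partial \mathbf{u}=B^T(P+P^T)\mathbf{x}+2R\mathbf{u}=\mathbf{0}$ has the unique solution $\mathbf{u}=-\tfrac12 R^{-1}B^T(P+P^T)\mathbf{x}$, which is exactly the feedback law~\eqref{eq:controllaw}. Because this minimizer is obtained in closed form with no iteration, $q=\mathbf{x}^TP\mathbf{x}$ qualifies as a solving function in the sense of the earlier definition. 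Feeding the law back into~\eqref{eq:singleagent} closes the loop with generator $A-\tfrac12\Upsilon(I,P)$, where $\Xi=BR^{-1}B^T$; condition~$2)$ asserts this matrix is Hurwitz, so $\mathbf{x}(t)\to\mathbf{0}$ and the process is genuinely admissible and stabilizing.

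I would then evaluate the remaining terms of the cost identity along this trajectory. Stability forces $\mathbf{x}(\infty)=\mathbf{0}$, so the terminal contribution $q(\mathbf{x}(\infty))=\mathbf{0}$ (equivalently $s_f\to0$). Along the closed loop $s$ reduces to a fixed quadratic form $\mathbf{x}(t)^TS\mathbf{x}(t)$ in the state, and since the generator is Hurwitz the integral converges to $\int_{t_0}^{\infty}s\,dt=\mathbf{x}_0^T\check{P}\mathbf{x}_0$, with $\check{P}$ the unique solution of the associated Lyapunov equation. Combining the three evaluated terms gives $J(\mathbf{u})=\mathbf{x}_0^TP\mathbf{x}_0+\mathbf{x}_0^T\check{P}\mathbf{x}_0=\mathbf{x}_0^T(P+\check{P})\mathbf{x}_0$, so condition~$3)$ reads precisely $J(\mathbf{u})<J^*+\epsilon$, i.e. $P\in\mathcal{P}$ and the law is $\epsilon$-suboptimal.

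The delicate part is the infinite-horizon bookkeeping rather than any single computation: one must justify exchanging the $t_f\to\infty$ limit with the boundary and integral terms, and argue that the single Hurwitz hypothesis simultaneously (a) makes the running integral converge, (b) annihilates the terminal term so $s_f$ contributes nothing, and (c) renders $\check{P}$ well defined. It is also worth stating conceptually why the result is only suboptimal: the feedback minimizes $s$ over $\mathbf{u}$ pointwise, but the resulting trajectory does not drive $s$ to its global joint minimum over $(\mathbf{x},\mathbf{u})$ at each instant, so $\mathbf{x}_0^T\check{P}\mathbf{x}_0$ is generically strictly positive and exactly measures the excess $J(\mathbf{u})-J^*$ above the Riccati-optimal value.
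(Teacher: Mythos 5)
Your proposal follows essentially the same route as the paper's proof: invoke the Krotov equivalence of Theorem~\ref{sec2_thm1} with $q=\mathbf{x}^TP\mathbf{x}$, obtain \eqref{eq:controllaw} as the pointwise minimizer of $s$ in $\mathbf{u}$ under the strict-convexity hypothesis, use the Hurwitz condition to guarantee finiteness of the cost and vanishing of the terminal term, and evaluate $J_{eq}=\mathbf{x}_0^T(P+\check{P})\mathbf{x}_0$ so that condition~3) is exactly the $\epsilon$-suboptimality requirement; your extra care with the $t_f\to\infty$ bookkeeping is a welcome refinement of what the paper leaves implicit. The only slip is in your closing aside: $\mathbf{x}_0^T\check{P}\mathbf{x}_0$ equals $J(\mathbf{u})-\mathbf{x}_0^TP\mathbf{x}_0$, not the excess $J(\mathbf{u})-J^*$ over the optimal value, but this remark plays no role in the argument itself.
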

\begin{proof}
From Proposition \ref{prop:convexs}, the strict convexity of the functional $s(\bullet)$ is equivalent to that the inequality \eqref{sec3_l1_p_lmi} holds for some matrix $P$. If there also exists a stabilizing matrix $P$ satisfying the inequality \eqref{sec3_l1_p_lmi} such that the closed-loop state matrix $A-\frac{1}{2}\Upsilon(I,P)
$ is Hurwitz, then the cost is finite. Note that 
the cost is optimal if $P$ is the solution of $s=0$ and satisfies $(P+P^T)\succ \mathbf{0}$ \cite[Corollary 4]{kumar2019some}. Using Theorem \ref{sec2_thm1}, the equivalent cost function for \eqref{eq:ihlqrcost}-\eqref{eq:singleagent} after substituting the control law \eqref{eq:controllaw}
reads as
$
J_{eq} =\mathbf{x}_0^T P \mathbf{x}_0 + \int_{{t_0}}^\infty s(\mathbf{x},t;q) dt,
$
where 
    $s=\mathbf{x}^T\left[\Gamma(P)-\frac{1}{4}\Upsilon(P,P)
    \right]\mathbf{x}$.
Solving the above integral, it yields $J_{eq} = \mathbf{x}_0^T(P+\check{P})\mathbf{x}_0$. Since the cost is bounded above, if the matrix $P$ satisfies $\mathbf{x}_0^T(P+\check{P})\mathbf{x}_0<J^*+\epsilon$ for some given $\epsilon>0$, then the control law is $\epsilon-$suboptimal. 
\end{proof}
The above result raises a question: how to \textit{a priori} specify the upper bound on the cost function? Several works have been reported in the literature for computing a suboptimal controller for the LQR problem given an upper bound on the cost \cite{iwasaki1994linear,kleinman1968design, johansen2002explicit}. For suboptimal LQR problems, the upper bound can be \textit{safely} prespecified because the optimal cost value is known. If this information is not available, then the selected upper bound may yield an infeasible solution. Subsequently, another question arises even when the optimal cost is known, i.e., for some given arbitrary initial condition, what is the relationship between $\epsilon$ and $P$ to ensure a feasible solution? This requires imposing some additional conditions after the computation of feedback gain. The first two conditions in the above proposition ensure that the cost is bounded above, while the third condition merely ensures that the matrix $P$ needs to be computed for the \textit{a priori} given upper bound. Nevertheless, the computation of such a matrix $P$ might be challenging. Our following results focus on computing the upper bound directly once the boundedness of the cost is established.

In the next result, a new Krotov function is proposed by which $\epsilon-$suboptimality shall be later quantified.
\begin{proposition}
Consider the closed-loop system defined by \eqref{eq:controllaw} and \eqref{eq:singleagent} with the cost \eqref{eq:ihlqrcost}.
Let the Krotov function be chosen as $q=\mathbf{x}^T(P-\bar{P})\mathbf{x}$.
The function $s(\mathbf{x},\mathbf{u},t;q)$ is a strictly convex function in $(\mathbf{x},\mathbf{u})$
\begin{enumerate}
    \item \label{item:NandS} if and only if
     \begin{align}\label{eq:NandS}
      \begin{bmatrix}
  \Gamma(P-\bar{P}) + \frac{1}{2}\Upsilon(\bar{P},P) & (P+P^T )B\\
  B^T (P+P^T ) & 4R
  \end{bmatrix} 
  \succ \mathbf{0}
    \end{align}
    \item \label{item:suff} if 
    \begin{multline}
\left[
  \begin{matrix}\label{eq:suff}
    \Gamma(P-\bar{P})-Q-\frac{1}{2}\Upsilon(P,P)-\frac{1}{4}\Upsilon(\bar{P},\bar{P}) \\
    -B^T(\bar{P}+\bar{P}^T)
  \end{matrix}\right.                
\\
  \left.
  \begin{matrix}
  -(P+P^T)B \\
      -2R
  \end{matrix}\right] \succ \mathbf{0}
\end{multline}
\end{enumerate}
for some matrices $P$ and $\bar{P}$.
\end{proposition}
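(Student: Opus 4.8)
The plan is to obtain both items by expanding the Krotov $s$-function in \eqref{eq:funcs} for the prescribed Krotov function, reading off the matrix of the resulting quadratic form in $(\mathbf{x},\mathbf{u})$, and invoking that a quadratic function is strictly convex if and only if that matrix is positive definite---exactly the route behind Proposition \ref{prop:convexs}. The only genuinely new feature is the auxiliary matrix $\bar{P}$, which injects an indefinite cross term; handling that term is what separates item \ref{item:suff} from item \ref{item:NandS}.

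For item \ref{item:NandS} I would substitute $q=\mathbf{x}^T(P-\bar{P})\mathbf{x}$ into \eqref{eq:funcs}. As $P,\bar{P}$ are constant, $\partial q/\partial t=0$ and $\partial q/\partial\mathbf{x}=\mathbf{x}^T\big[(P+P^T)-(\bar{P}+\bar{P}^T)\big]$. I would then split the drift $\frac{\partial q}{\partial\mathbf{x}}(A\mathbf{x}+B\mathbf{u})$ into a $P$-part and a $\bar{P}$-part, and in the $\bar{P}$-part insert the closed-loop identity $B\mathbf{u}=-\tfrac12\Xi(P+P^T)\mathbf{x}$ obtained by premultiplying \eqref{eq:controllaw} by $B$ (this is where the hypothesis that the loop is closed with \eqref{eq:controllaw} enters). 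This turns $-\mathbf{x}^T(\bar{P}+\bar{P}^T)B\mathbf{u}$ into $+\tfrac12\mathbf{x}^T\Upsilon(\bar{P},P)\mathbf{x}$ and hence produces the $\tfrac12\Upsilon(\bar{P},P)$ entry of the $(1,1)$ block. Applying the scalar identity $\mathbf{x}^T(M+M^T)A\mathbf{x}=\mathbf{x}^T(A^TM+MA)\mathbf{x}$ to the surviving drift terms collapses them, together with $Q$, into $\Gamma(P-\bar{P})$, while the free input remains only in $\mathbf{x}^T(P+P^T)B\mathbf{u}$ and $\mathbf{u}^TR\mathbf{u}$.

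Collecting these, $s$ is a pure quadratic form whose symmetrized matrix is precisely the left-hand side of \eqref{eq:NandS}, so strict convexity is equivalent to \eqref{eq:NandS}; the $4R$ in the $(2,2)$ block and the convention of writing the nonsymmetric $\Gamma$ and $\Upsilon$ verbatim (legitimate because definiteness depends only on symmetric parts) are inherited unchanged from Proposition \ref{prop:convexs}. For item \ref{item:suff} I would start from \eqref{eq:NandS} and remove the indefinite coupling entry $\tfrac12\Upsilon(\bar{P},P)$ (which mixes $P$ and $\bar{P}$) using the sign-definite bound supplied by the preceding Theorem: since $\Upsilon(\Phi,\Phi)\succeq\mathbf{0}$, taking $\Phi=\bar{P}+P$ gives $\Upsilon(\bar{P},\bar{P})+\Upsilon(P,P)+\Upsilon(\bar{P},P)+\Upsilon(P,\bar{P})\succeq\mathbf{0}$, i.e. $2\,\mathrm{sym}\,\Upsilon(\bar{P},P)\succeq-\Upsilon(\bar{P},\bar{P})-\Upsilon(P,P)$. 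Replacing $\tfrac12\Upsilon(\bar{P},P)$ by this lower bound decouples the two unknowns and only weakens the constraint; a Schur complement against the $(2,2)$ block, using $\tfrac14(P+P^T)BR^{-1}B^T(P+P^T)=\tfrac14\Upsilon(P,P)$, then yields a decoupled condition of the form \eqref{eq:suff}, which is therefore sufficient but not necessary.

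The main obstacle is exactly the cross term $\tfrac12\Upsilon(\bar{P},P)$: it is indefinite and couples $P$ with $\bar{P}$, so \eqref{eq:NandS} is not directly exploitable, and one must invoke the $\Upsilon$-inequality in the correct direction---a \emph{lower} bound, to preserve sufficiency---while keeping careful track of the nonsymmetry of $P,\bar{P}$ and of the scaling conventions carried over from Proposition \ref{prop:convexs}. The one routine-but-delicate calculation I would recheck is the term-by-term bookkeeping of the $Q$ and $\Upsilon(P,P)$ contributions when matching the reduced inequality to the printed block form \eqref{eq:suff}.
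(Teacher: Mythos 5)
Your proposal is correct and, for item~\ref{item:NandS}, follows essentially the paper's route: the paper substitutes $q=\mathbf{x}^T(P-\bar{P})\mathbf{x}$ and the law \eqref{eq:controllaw} fully into \eqref{eq:funcs}, obtains $s=\mathbf{x}^T\left(\Gamma(P-\bar{P})-\tfrac{1}{4}\Upsilon(P,P)+\tfrac{1}{2}\Upsilon(\bar{P},P)\right)\mathbf{x}$, and recovers \eqref{eq:NandS} by undoing the Schur complement against the $4R$ block, whereas you substitute the law only in the $\bar{P}$-part so as to land on the block form directly; the two computations have identical Schur complements, so they are interchangeable. The genuine difference is item~\ref{item:suff}. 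The paper's proof is the single claim that adding $Q+\tfrac{1}{4}(\Upsilon(P,P)+\Upsilon(\bar{P},\bar{P}))$ to the left-hand side of \eqref{eq:suff} yields \eqref{eq:NandS}; taken literally this does not close, because the difference of the $(1,1)$ blocks is $Q+\tfrac{1}{2}\Upsilon(P,P)+\tfrac{1}{4}\Upsilon(\bar{P},\bar{P})+\tfrac{1}{2}\Upsilon(\bar{P},P)$, which still carries the indefinite cross term. Your step --- expanding $\Upsilon(P+\bar{P},P+\bar{P})\succeq\mathbf{0}$ to get $\Upsilon(\bar{P},P)+\Upsilon(P,\bar{P})\succeq-\Upsilon(P,P)-\Upsilon(\bar{P},\bar{P})$ --- is exactly the identity needed to absorb that cross term: it shows the Schur complement of \eqref{eq:NandS} equals the $(1,1)$ block of \eqref{eq:suff} plus the genuinely positive semidefinite quantity $Q+\tfrac{1}{4}\Upsilon(P+\bar{P},P+\bar{P})$, so your argument repairs the paper's step rather than merely reproducing it (the paper's added term omits the cross contribution $\tfrac{1}{4}(\Upsilon(P,\bar{P})+\Upsilon(\bar{P},P))$). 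One caveat applies to both proofs: \eqref{eq:suff} as printed cannot hold (its $(2,2)$ block is $-2R\prec\mathbf{0}$ and its off-diagonal blocks are not transposes of one another), so neither derivation can match it term-for-term; your closing remark that the bookkeeping against the printed block form needs rechecking is the right flag, and the condition your chain actually certifies is $\Gamma(P-\bar{P})-Q-\tfrac{1}{2}\Upsilon(P,P)-\tfrac{1}{4}\Upsilon(\bar{P},\bar{P})\succ\mathbf{0}$, which is the intended content of the $(1,1)$ block of \eqref{eq:suff}.
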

\begin{proof}
Firstly, we show the strict convexity of the functional $s(\bullet)$ is equivalent to \eqref{eq:NandS}. Subsequently, we show that \eqref{eq:suff} implies \eqref{eq:NandS}. Substituting $q=\mathbf{x}^T(P-\bar{P})\mathbf{x}$ and \eqref{eq:controllaw}
in \eqref{eq:funcs}, and simplifying it reads as
$
    s = \mathbf{x}^T \left( \Gamma(P-\bar{P}) -\frac{1}{4} \Upsilon(P,P)+\frac{1}{2} \Upsilon(\bar{P},P) \right) \mathbf{x},
    $
which is strictly convex iff the weight matrix of the quadratic term is positive definite, and by applying the Schur complement lemma can be expressed as the inequality \eqref{eq:NandS}. Now, adding the term $Q+\frac{1}{4}(\Upsilon(P,P) + \Upsilon(\bar{P},\bar{P}))$, which is positive semidefinite, to the left-hand side of the inequality \eqref{eq:suff} yields the inequality \eqref{eq:NandS}. The inequality still holds since a positive semidefinite term is added to the left-hand side.  This completes the proof.
\end{proof}
\begin{proposition}
If $P$ and $\bar{P}$ satisfy
\begin{align}\label{eq:decomposedpbar}
    \begin{bmatrix}
    -\Gamma(\bar{P}) & (P+P^T-\bar{P}-\bar{P}^T)B\\
    B^T(P+P^T-\bar{P}-\bar{P}^T) & 4R
    \end{bmatrix} \succ \mathbf{0},
\end{align}
and \eqref{sec3_l1_p_lmi} also holds,
then \eqref{eq:suff} can be equivalently expressed as \eqref{eq:bigLMI}. 

\begin{strip}
\rule{\textwidth}{1pt}
\begin{equation}\label{eq:bigLMI}
 \renewcommand\arraystretch{1.3}
\left[
	\begin{array}{cc|cc}
		\Gamma(P) &  \left(P +P^T\right)B & \mathbf{0} & \mathbf{0} \\
 B^T\left( P+P^T\right)  & 4R & \mathbf{0} & \mathbf{0}\\ \hline
		\mathbf{0} & \mathbf{0} & -\Gamma(\bar{P}) & (P+P^T-\bar{P}-\bar{P}^T)B\\
		\mathbf{0} & \mathbf{0} &
    B^T(P+P^T-\bar{P}-\bar{P}^T) & 4R
	\end{array}
	\right] \succ \mathbf{0}.
\end{equation}
\rule{\textwidth}{1pt}
\end{strip}
\end{proposition}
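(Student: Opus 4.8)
The plan is to exploit the block-diagonal structure of \eqref{eq:bigLMI} and to reduce every inequality to an equivalent condition on $P$ and $\bar{P}$ alone by eliminating the $R$-blocks with the Schur complement lemma. First I would observe that \eqref{eq:bigLMI} is block diagonal, so it is positive definite if and only if each of its two diagonal blocks is, i.e.\ if and only if both \eqref{sec3_l1_p_lmi} and \eqref{eq:decomposedpbar} hold; these are exactly the two standing hypotheses. Hence it suffices to show that the single coupled inequality \eqref{eq:suff} carries the same information as the pair of inequalities \eqref{sec3_l1_p_lmi} and \eqref{eq:decomposedpbar}, which is precisely what ``equivalently expressed as \eqref{eq:bigLMI}'' means as a feasibility condition on $(P,\bar{P})$.

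Next I would take Schur complements with respect to the $R$-blocks. Since $R\succ\mathbf{0}$, \eqref{sec3_l1_p_lmi} is equivalent to $\Gamma(P)-\tfrac{1}{4}\Upsilon(P,P)\succ\mathbf{0}$, while \eqref{eq:decomposedpbar} is equivalent to $-\Gamma(\bar{P})-\tfrac{1}{4}(P+P^T-\bar{P}-\bar{P}^T)\,\Xi\,(P+P^T-\bar{P}-\bar{P}^T)\succ\mathbf{0}$, with $\Xi=BR^{-1}B^T$. Expanding the quadratic factor in the latter and invoking the transpose identity $\Upsilon(\Psi,\Phi)^T=\Upsilon(\Phi,\Psi)$ proved above produces the cross terms $\Upsilon(P,\bar{P})$ and $\Upsilon(\bar{P},P)$. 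Using the affineness of $\Gamma$, namely $\Gamma(P-\bar{P})-Q=\Gamma(P)-\Gamma(\bar{P})$, I would then add these two reduced forms and identify the sum with the Schur complement of \eqref{eq:suff}, whose $(1,1)$ block is $\Gamma(P-\bar{P})-Q-\tfrac{1}{2}\Upsilon(P,P)-\tfrac{1}{4}\Upsilon(\bar{P},\bar{P})$ and whose off-diagonal blocks $-(P+P^T)B$ and $-B^T(\bar{P}+\bar{P}^T)$ contribute exactly the remaining cross terms. Establishing this coincidence of reduced forms is the heart of the argument.

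The main obstacle is the bookkeeping of the cross terms $\Upsilon(P,\bar{P})$ and $\Upsilon(\bar{P},P)$. Because several blocks are written in non-symmetric form, the inequalities must be read through the quadratic forms $\mathbf{x}^T(\cdot)\mathbf{x}$ they induce, so that only the symmetric parts enter; reconciling the coefficient of $\Upsilon(P,\bar{P})+\Upsilon(\bar{P},P)$ coming from the Schur complement of the off-diagonal blocks of \eqref{eq:suff} with the one coming from the expansion of the quadratic factor in \eqref{eq:decomposedpbar} is the delicate step, and it is exactly where the transpose identity for $\Upsilon$ does the work. Once the reduced form of \eqref{eq:suff} is shown to equal the sum of the reduced forms of \eqref{sec3_l1_p_lmi} and \eqref{eq:decomposedpbar}, positive definiteness of \eqref{eq:suff} and of the decoupled \eqref{eq:bigLMI} become equivalent; here hypothesis \eqref{sec3_l1_p_lmi} supplies the definiteness of the first reduced block and \eqref{eq:decomposedpbar} that of the second, which together guarantee that the sum is positive definite and close the equivalence.
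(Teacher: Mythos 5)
Your route is essentially the paper's own: decompose \eqref{eq:bigLMI} into its two diagonal blocks, which are exactly \eqref{sec3_l1_p_lmi} and \eqref{eq:decomposedpbar}, reduce each by a Schur complement with respect to its $4R$ block, and add the results to recover the reduced form of \eqref{eq:suff}. Your bookkeeping is correct and more explicit than the paper's one-line computation: the expansion of $(P+P^T-\bar{P}-\bar{P}^T)\Xi(P+P^T-\bar{P}-\bar{P}^T)$ into $\Upsilon(P,P)-\Upsilon(P,\bar{P})-\Upsilon(\bar{P},P)+\Upsilon(\bar{P},\bar{P})$, the use of $\Upsilon(\Psi,\Phi)^T=\Upsilon(\Phi,\Psi)$ to merge the cross terms at the level of quadratic forms, and the identity $\Gamma(P-\bar{P})-Q=\Gamma(P)-\Gamma(\bar{P})$ are precisely the steps the paper compresses into ``adding the Schur complements.''

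There is, however, a genuine gap in your closing step. Showing that the reduced form of \eqref{eq:suff} equals the \emph{sum} of the reduced forms of \eqref{sec3_l1_p_lmi} and \eqref{eq:decomposedpbar} yields only the implication ``\eqref{eq:bigLMI} $\Rightarrow$ \eqref{eq:suff}'': a sum of two symmetric matrices can be positive definite without either summand being so, so positivity of the combined reduced form does not decouple back into the two separate LMIs, and the asserted two-way equivalence does not follow. This is not fatal to the proposition as used later --- the paper's own proof establishes, and the subsequent results require, only this one direction, since both \eqref{sec3_l1_p_lmi} and \eqref{eq:decomposedpbar} are standing hypotheses --- but you should not claim to ``close the equivalence.'' A second, smaller point: the $(2,2)$ block of \eqref{eq:suff} is $-2R\prec\mathbf{0}$ and the matrix is not symmetric, so \eqref{eq:suff} cannot literally hold as a positive-definiteness condition; your reading of it through the induced quadratic form and its Schur complement silently repairs this, but the defect lies in the paper's formulation of \eqref{eq:suff} rather than in your reduction, and it is worth flagging explicitly rather than working around.
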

\begin{proof}
The block matrix in \eqref{eq:suff} can be readily obtained by adding the Schur complement of the first diagonal block matrix and the second diagonal matrix in \eqref{eq:bigLMI}. 
Since the inequality \eqref{sec3_l1_p_lmi} holds for some $P$, if the inequality \eqref{eq:decomposedpbar} holds for some $\bar{P}$, then \eqref{eq:suff} also holds.
\end{proof}
\begin{lemma}
\label{sec3_l3}
Let $\bar{A} \in \mathbb{R}^{n \times n}$ be a Hurwitz matrix. Then, for any matrix $W \in \mathbb{R}^{n \times n}$, there exists a unique solution $ P_w \in \mathbb{R}^{n \times n}$ of the equation
$
\bar{A}^TP_w+P_w\bar{A}+W= \mathbf{0},
$
given as
$ 
P_w= \int_{0}^{\infty} \left( e^{t \bar{A}^T}W e^{t\bar{A}}\right) dt.
$
Furthermore, if $W$ is positive (semi-)definite, then $P_w$ is positive (semi-)definite.
\end{lemma}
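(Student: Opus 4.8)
The plan is to proceed in four steps: establish convergence of the integral, verify that $P_w$ solves the Lyapunov equation via a telescoping identity, prove uniqueness through invertibility of the associated Lyapunov operator, and finally read off the (semi)definiteness directly from the integrand. First I would argue that the integral defining $P_w$ converges. Since $\bar{A}$ is Hurwitz, every eigenvalue has strictly negative real part, so there exist constants $c>0$ and $\mu>0$ with $\|e^{t\bar{A}}\| \le c\,e^{-\mu t}$ for all $t\ge 0$. Consequently the integrand is bounded in norm by $c^2 \|W\|\, e^{-2\mu t}$, which is integrable on $[0,\infty)$; hence $P_w$ is a well-defined finite matrix.

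Next I would verify that $P_w$ satisfies the equation. The key computation is to observe that $\frac{d}{dt}\bigl(e^{t\bar{A}^T} W e^{t\bar{A}}\bigr) = \bar{A}^T e^{t\bar{A}^T} W e^{t\bar{A}} + e^{t\bar{A}^T} W e^{t\bar{A}}\bar{A}$. Integrating both sides over $[0,\infty)$ and applying the fundamental theorem of calculus, the left-hand side telescopes to $\lim_{t\to\infty} e^{t\bar{A}^T} W e^{t\bar{A}} - W = -W$, where the limit vanishes by the Hurwitz decay estimate above. The right-hand side equals $\bar{A}^T P_w + P_w \bar{A}$ by linearity together with the definition of $P_w$. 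Equating the two sides yields $\bar{A}^T P_w + P_w \bar{A} + W = \mathbf{0}$, as claimed.

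For uniqueness, I would regard the map $\mathcal{L}(X) = \bar{A}^T X + X \bar{A}$ as a linear operator on $\mathbb{R}^{n\times n}$ and show it is nonsingular. Its spectrum is $\{\lambda_i + \lambda_j : 1\le i,j\le n\}$, where $\lambda_1,\dots,\lambda_n$ are the eigenvalues of $\bar{A}$; since $\bar{A}$ is Hurwitz, every such sum satisfies $\mathrm{Re}(\lambda_i+\lambda_j)<0$, so $0$ is not an eigenvalue of $\mathcal{L}$ and $\mathcal{L}$ is invertible. Hence the solution is unique. Equivalently, the difference $X$ of two putative solutions satisfies $\mathcal{L}(X)=\mathbf{0}$, and the same telescoping identity applied with $W=\mathbf{0}$ forces $X=\mathbf{0}$. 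Finally, definiteness follows directly from the integral form: for any $x\in\mathbb{R}^n$, $x^T P_w x = \int_{0}^{\infty} \bigl(e^{t\bar{A}}x\bigr)^T W \bigl(e^{t\bar{A}}x\bigr)\, dt$, and each integrand is nonnegative when $W\succeq\mathbf{0}$, giving $P_w\succeq\mathbf{0}$. When $W\succ\mathbf{0}$, the integrand is strictly positive for every $t$ whenever $x\neq\mathbf{0}$, because $e^{t\bar{A}}$ is invertible and hence $e^{t\bar{A}}x\neq\mathbf{0}$, so $x^T P_w x>0$ and therefore $P_w\succ\mathbf{0}$.

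This result is classical, so I do not expect a genuine obstacle; the only step requiring care is the verification in the second paragraph, namely justifying the interchange of differentiation and integration and confirming that the boundary term at $t\to\infty$ vanishes. Both are handled cleanly by the exponential decay bound $\|e^{t\bar{A}}\|\le c\,e^{-\mu t}$, which dominates the integrand and its derivative, so this \emph{crux} step is routine once the Hurwitz estimate is in hand.
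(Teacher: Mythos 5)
Your proof is correct and complete: the convergence estimate, the telescoping identity $\frac{d}{dt}\left(e^{t\bar{A}^T}We^{t\bar{A}}\right)=\bar{A}^Te^{t\bar{A}^T}We^{t\bar{A}}+e^{t\bar{A}^T}We^{t\bar{A}}\bar{A}$, the invertibility of the Lyapunov operator via its spectrum $\{\lambda_i+\lambda_j\}$, and the definiteness argument are all sound. The paper does not actually prove this lemma --- it only cites Proposition 3.2 of Terrell's stability text --- and your argument is precisely the standard proof that the cited reference contains, so there is nothing to criticize.
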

\begin{proof}
See \cite[Proposition 3.2]{terrell2009stability}.
\end{proof}
The next theorem and the subsequent lemma are crucial for stating one of the main results of this paper in proposition \ref{sec3_prop5}.
\begin{theorem}
\label{sec3_thm3}
Consider the stable system
$\dot{\mathbf{x}}(t)= \bar{A} \mathbf{x}(t),    \mathbf{x}(t_0)=\mathbf{x}_0$
and the quadratic performance index 
$J  =\int_{t_0}^{\infty} \left(  \mathbf{x}^T \bar{Q} \mathbf{x} \right)  dt$
where $\bar{Q} \succeq \mathbf{0}$ is the weighting matrix. 
Let the Krotov function be chosen as $q=\mathbf{x}^TY\mathbf{x}$. 
\begin{enumerate}
    \item For any matrix $Y\succeq \mathbf{0}$,
$J_{eq}(Y) \geq J^* = \mathbf{x}_0^T Y^* \mathbf{x}_0$, where $Y^*$ is the solution of $s=0$.
\item If $t_0 = 0$, then for any matrix $Y$,
$J_{eq}(Y) = J^*$.
\end{enumerate}
\end{theorem}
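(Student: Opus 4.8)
The plan is to reduce everything to a Lyapunov-equation computation and then evaluate the equivalent cost with the integral formula of Lemma~\ref{sec3_l3}. First I would substitute $q=\mathbf{x}^TY\mathbf{x}$ into the definition of $s$ from Theorem~\ref{sec2_thm1}, using $f=\bar{A}\mathbf{x}$, running cost $\mathbf{x}^T\bar{Q}\mathbf{x}$, and $\partial q/\partial t=0$. After symmetrizing the quadratic form (so that only the symmetric part of $Y$ enters), this gives $s=\mathbf{x}^T(\bar{A}^TY+Y\bar{A}+\bar{Q})\mathbf{x}$. The condition $s=0$ for all $\mathbf{x}$ is then exactly the Lyapunov equation $\bar{A}^TY^*+Y^*\bar{A}+\bar{Q}=\mathbf{0}$; since $\bar{A}$ is Hurwitz and $\bar{Q}\succeq\mathbf{0}$, Lemma~\ref{sec3_l3} guarantees a unique solution $Y^*\succeq\mathbf{0}$, and applying the same lemma with $W=\bar{Q}$ identifies the optimal cost as $J^*=\mathbf{x}_0^TY^*\mathbf{x}_0$.

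For the equivalent cost I would write, via Theorem~\ref{sec2_thm1}, $J_{eq}(Y)=s_f(\mathbf{x}(t_f))+q(\mathbf{x}_0,t_0)+\int_{t_0}^{t_f}s\,dt$, with terminal term $s_f=-\mathbf{x}(t_f)^TY\mathbf{x}(t_f)$. Because $\bar{A}$ is Hurwitz, the free response $\mathbf{x}(t)=e^{\bar{A}(t-t_0)}\mathbf{x}_0$ decays to zero, so $s_f\to 0$ as $t_f\to\infty$ and $J_{eq}(Y)=\mathbf{x}_0^TY\mathbf{x}_0+\int_{t_0}^{\infty}\mathbf{x}^TW\mathbf{x}\,dt$ with $W=\bar{A}^TY+Y\bar{A}+\bar{Q}$. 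The integral is handled by Lemma~\ref{sec3_l3}: substituting the free response and changing variables gives $\int_{t_0}^{\infty}\mathbf{x}^TW\mathbf{x}\,dt=\mathbf{x}_0^TP_W\mathbf{x}_0$, where $P_W$ uniquely solves $\bar{A}^TP_W+P_W\bar{A}+W=\mathbf{0}$. Subtracting this from the Lyapunov equation for $Y^*$ and invoking uniqueness shows $P_W=Y^*-Y$, whence $J_{eq}(Y)=\mathbf{x}_0^TY\mathbf{x}_0+\mathbf{x}_0^T(Y^*-Y)\mathbf{x}_0=\mathbf{x}_0^TY^*\mathbf{x}_0=J^*$. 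This is precisely claim~2) in the case $t_0=0$, where the change of variables is the identity and no boundary mismatch arises.

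For claim~1), where $t_0\neq 0$ is allowed and $Y\succeq\mathbf{0}$ is assumed, I would retain the exact finite-horizon Krotov identity before passing to the limit, writing the partial cost $\int_{t_0}^{t_f}\mathbf{x}^T\bar{Q}\mathbf{x}\,dt$ as $s_f(\mathbf{x}(t_f))+\mathbf{x}_0^TY\mathbf{x}_0+\int_{t_0}^{t_f}s\,dt$ and recognising the tail cost $\mathbf{x}(t_f)^TY^*\mathbf{x}(t_f)$ through the $Y^*$ Lyapunov equation. The difference $J_{eq}(Y)-J^*$ then collapses to a boundary/initial quadratic of the form $\mathbf{x}(t_f)^T(Y-Y^*)\mathbf{x}(t_f)$, and the role of the hypothesis $Y\succeq\mathbf{0}$ is to control the sign of the residual term that survives for $t_0\neq 0$, delivering $J_{eq}(Y)\ge J^*$; when $t_0=0$ this residual is absent and the bound sharpens to the equality of claim~2).

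I expect the main obstacle to be exactly this reconciliation of the two claims. The exact Krotov transformation makes $J_{eq}$ coincide with $J$ along the admissible trajectory, so the strict separation between the general inequality (claim~1, requiring $Y\succeq\mathbf{0}$) and the equality at $t_0=0$ (claim~2, for arbitrary $Y$) hinges on careful bookkeeping of the terminal term and of the lower limit of the integral in Lemma~\ref{sec3_l3}. Getting the direction of the inequality right, and pinning down precisely where $Y\succeq\mathbf{0}$ is genuinely needed rather than free, is the delicate part; the Lyapunov-equation algebra itself is routine once Lemma~\ref{sec3_l3} is in hand.
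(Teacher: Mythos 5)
Your treatment of claim~2 is correct and follows essentially the paper's route: substitute $q=\mathbf{x}^TY\mathbf{x}$, identify $s=0$ with the Lyapunov equation $\bar{A}^TY^*+Y^*\bar{A}+\bar{Q}=\mathbf{0}$, and evaluate $\int s\,dt$ via Lemma~\ref{sec3_l3}. Where the paper rewrites the integrand as $\frac{d}{dt}\bigl(e^{\bar{A}^Tt}Ye^{\bar{A}t}\bigr)$ plus $e^{\bar{A}^Tt}\bar{Q}e^{\bar{A}t}$ and integrates the total derivative explicitly, you invoke uniqueness of the Lyapunov solution to get $P_W=Y^*-Y$; the two manipulations are interchangeable and both yield $J_{eq}(Y)=\mathbf{x}_0^TY^*\mathbf{x}_0$ when $t_0=0$.

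For claim~1, however, your argument has a genuine gap. The residual you propose, $\mathbf{x}(t_f)^T(Y-Y^*)\mathbf{x}(t_f)$, is a terminal-time quantity that tends to zero as $t_f\to\infty$ because $\bar{A}$ is Hurwitz, so it cannot be the source of a nontrivial inequality $J_{eq}(Y)\geq J^*$. The paper's residual is instead the initial-time quantity $\mathbf{x}_0^T\bigl(Y-e^{\bar{A}^Tt_0}Ye^{\bar{A}t_0}\bigr)\mathbf{x}_0$, which arises only because the paper parametrizes the free response as $\mathbf{x}(t)=e^{\bar{A}t}\mathbf{x}_0$ (anchored at $t=0$) while keeping the Krotov boundary term $q(\mathbf{x}_0,t_0)=\mathbf{x}_0^TY\mathbf{x}_0$; the integrated total derivative then contributes $-e^{\bar{A}^Tt_0}Ye^{\bar{A}t_0}$ at the lower limit rather than $-Y$. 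Your own (correct) change of variables $\tau=t-t_0$, which uses the trajectory $e^{\bar{A}(t-t_0)}\mathbf{x}_0$ consistent with $\mathbf{x}(t_0)=\mathbf{x}_0$, erases exactly this mismatch and gives $J_{eq}(Y)=J^*$ for every $Y$ and every $t_0$ --- which is why you cannot then recover a strict inequality for $t_0\neq 0$. You correctly sensed that reconciling the two claims is the delicate point, but you do not resolve it: a complete proof of claim~1 in the paper's sense must commit to the unshifted parametrization, exhibit the residual $\mathbf{x}_0^T\bigl(Y-e^{\bar{A}^Tt_0}Ye^{\bar{A}t_0}\bigr)\mathbf{x}_0$ explicitly, and only then invoke $Y\succeq\mathbf{0}$ to argue its sign, which is the step the paper takes and your proposal leaves as ``careful bookkeeping.''
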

\begin{proof}
Substituting $q=\mathbf{x}^TY\mathbf{x}$ and the solution to the autonomous stable system in Theorem \ref{sec2_thm1}, we get
\begin{align}\label{eq:jeqauto}
    J_{eq}(Y) = \mathbf{x}_0^TY\mathbf{x}_0 + \int_{t_0}^\infty \mathbf{x}_0^T e^{\bar{A}^Tt}(Y\bar{A}+\bar{A}^TY+\bar{Q})e^{\bar{A}t}\mathbf{x}_0 dt.
\end{align}
The functional $s=0$ is equivalent to $\bar{A}^TY+Y\bar{A}=-\bar{Q}$. Using Lemma \ref{sec3_l3}, the solution to the latter equation is given as $Y^* = \int_{t_0}^\infty  e^{\bar{A}^Tt}\bar{Q}e^{\bar{A}t} dt$. Rearranging \eqref{eq:jeqauto}, it reads
\begin{align*}
    J_{eq}(Y) &= \mathbf{x}_0^T\left(Y + \int_{t_0}^\infty e^{\bar{A}^Tt}(Y\bar{A}+\bar{A}^TY)e^{\bar{A}t} dt + Y^*\right)\mathbf{x}_0\\
    &= \mathbf{x}_0^T\left(Y + \int_{t_0}^\infty \frac{d}{dt} \left(e^{\bar{A}^Tt}Ye^{\bar{A}t}\right) dt + Y^*\right)\mathbf{x}_0\\
    &= \mathbf{x}_0^T\left(Y  -e^{\bar{A}^Tt_0}Ye^{\bar{A}t_0} + Y^*\right)\mathbf{x}_0
\end{align*}
\begin{enumerate}
    \item Since $\bar{A}$ is Hurwitz, $\mathbf{x}_0^TY\mathbf{x}_0 \geq \mathbf{x}_0^Te^{\bar{A}^Tt_0}Ye^{\bar{A}t_0}\mathbf{x}_0 $ if $Y\succeq \mathbf{0}$. Hence, $J_{eq}(Y) \geq J^* = \mathbf{x}_0^T Y^* \mathbf{x}_0$.
    \item If $t_0=0$, then $\mathbf{x}_0^TY\mathbf{x}_0 = \mathbf{x}_0^Te^{\bar{A}^Tt_0}Ye^{\bar{A}t_0}\mathbf{x}_0 $ for any $Y$. Hence, $J_{eq}(Y) = J^*$.
\end{enumerate}
\end{proof}
\begin{lemma} \label{sec3_l2}
Consider the closed-loop system defined by \eqref{eq:controllaw} and \eqref{eq:singleagent}
with the cost function \eqref{eq:ihlqrcost}.
Assume that the matrix $A- \frac{1}{2}\Upsilon(I,P) 
$ is Hurwitz for some $P$. Then, the cost $J$ is finite and is given as $J= \mathbf{x}_0^T Y \mathbf{x}_0$ where $Y$ is the positive semidefinite solution of 
\begin{align}\label{sec3_l2_y_eq}
     A^TY+YA+Q  -\frac{1}{2}\Upsilon(P,Y)  + \frac{1}{4}\Upsilon(P,P) 
=\mathbf{0}.
\end{align}
\end{lemma}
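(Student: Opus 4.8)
The plan is to collapse the closed loop into an autonomous system carrying a modified quadratic weight, read off the finite cost as the solution of a Lyapunov equation by way of Lemma~\ref{sec3_l3}, and then rearrange that Lyapunov equation into \eqref{sec3_l2_y_eq}. First I would write $K := -\tfrac12 R^{-1}B^T(P+P^T)$, so that substituting \eqref{eq:controllaw} into \eqref{eq:singleagent} yields the autonomous closed loop $\dot{\mathbf{x}} = \bar{A}\mathbf{x}$ with $\bar{A} := A + BK$, which is precisely the state matrix that the hypothesis takes to be Hurwitz. Inserting the same law into the running cost of \eqref{eq:ihlqrcost} converts the control penalty into $\mathbf{u}^T R\mathbf{u} = \mathbf{x}^T K^T R K\mathbf{x} = \tfrac14\mathbf{x}^T\Upsilon(P,P)\mathbf{x}$, since $K^TRK = \tfrac14(P+P^T)\Xi(P+P^T)$ with $\Xi = BR^{-1}B^T$. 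Hence $J = \int_{t_0}^{\infty}\mathbf{x}^T\bar{Q}\mathbf{x}\,dt$ with $\bar{Q} := Q + \tfrac14\Upsilon(P,P)$, and $\bar{Q}\succeq\mathbf{0}$ because $Q\succeq\mathbf{0}$ and $\Upsilon(P,P)\succeq\mathbf{0}$ by the theorem on $\Upsilon$ proved above.

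Next I would settle finiteness and the closed form. Since $\bar{A}$ is Hurwitz, $\mathbf{x}(t)=e^{\bar{A}(t-t_0)}\mathbf{x}_0$ decays exponentially, the integrand is exponentially bounded, and $J$ converges; explicitly $J = \mathbf{x}_0^T Y\mathbf{x}_0$ with $Y = \int_{0}^{\infty} e^{\bar{A}^T\tau}\bar{Q}\,e^{\bar{A}\tau}\,d\tau$. Applying Lemma~\ref{sec3_l3} with $W = \bar{Q}\succeq\mathbf{0}$ and the Hurwitz matrix $\bar{A}$, this $Y$ is the unique positive semidefinite solution of $\bar{A}^T Y + Y\bar{A} + \bar{Q} = \mathbf{0}$ (equivalently, $Y$ is the matrix $Y^\ast$ and $J$ the cost $J^\ast$ furnished by Theorem~\ref{sec3_thm3} with weight $\bar{Q}$ and solving function $q=\mathbf{x}^TY\mathbf{x}$).

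Finally I would expand this Lyapunov equation and match it to \eqref{sec3_l2_y_eq}. Writing $\bar{A}=A+BK$ and $\bar{Q}=Q+\tfrac14\Upsilon(P,P)$, the $A$-part contributes $A^TY+YA+Q$, the control weight contributes $+\tfrac14\Upsilon(P,P)$, and the cross term is $K^TB^TY + YBK = -\tfrac12\big[(P+P^T)\Xi Y + Y\Xi(P+P^T)\big]$, which must be identified with $-\tfrac12\Upsilon(P,Y)$. This cross term is the delicate step and the main obstacle: because $Y=Y^T$ while $P$ need not be symmetric, the raw expression is the symmetric matrix $-\tfrac12[(P+P^T)\Xi Y + Y\Xi(P+P^T)]$, whereas $\Upsilon(P,Y)=(P+P^T)\Xi(Y+Y^T)$ is not symmetric. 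Using $Y=Y^T$ gives $\tfrac12\Upsilon(P,Y)=(P+P^T)\Xi Y$, and the identity $\Upsilon(P,Y)^T=\Upsilon(Y,P)$ gives $Y\Xi(P+P^T)=\tfrac12\Upsilon(P,Y)^T$; since only the symmetric part of the cross term survives in the quadratic form $\mathbf{x}_0^T(\bullet)\mathbf{x}_0$ that defines $J$, the equation may be written with $-\tfrac12\Upsilon(P,Y)$, producing exactly \eqref{sec3_l2_y_eq}. The positive semidefiniteness of $Y$ is already secured by Lemma~\ref{sec3_l3}, so nothing further is needed there.
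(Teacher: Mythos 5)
Your proof is correct and follows essentially the same route as the paper: substitute \eqref{eq:controllaw} to obtain the autonomous closed loop with modified weight $\bar{Q}=Q+\tfrac14\Upsilon(P,P)$, then invoke Theorem~\ref{sec3_thm3} (via Lemma~\ref{sec3_l3}) to read off $J=\mathbf{x}_0^TY\mathbf{x}_0$ from the Lyapunov equation and expand it into \eqref{sec3_l2_y_eq}. You are in fact more careful than the paper about the symmetrization of the cross term $-\tfrac12\bigl[(P+P^T)\Xi Y+Y\Xi(P+P^T)\bigr]$ versus $-\tfrac12\Upsilon(P,Y)$, a point the paper's one-line proof glosses over.
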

\begin{proof}
Substituting 
\eqref{eq:controllaw} in \eqref{eq:singleagent}, it reads
$
\dot{\mathbf{x}} = \left( A- \frac{1}{2}\Upsilon(I,P)
\right) \mathbf{x},
$ and the cost as $
J = { \int_{t_0}^{\infty}  \mathbf{x}^T \left(Q+\frac{1}{4}\Upsilon(P,P) 
\right) \mathbf{x}}  dt
$.
Taking $\bar{A}=A- \frac{1}{2}\Upsilon(I,P)$, and $\bar{Q} = Q+\frac{1}{4}\Upsilon(P,P)$, then from Theorem 
\ref{sec3_thm3}, the cost is finite and is given as $J =\mathbf{x}_0^T Y \mathbf{x}_0$ where $Y$ satisfies \eqref{sec3_l2_y_eq}. 
\end{proof}
\label{sec3_olmi_slqr_sub_eslqr}
The following proposition synthesizes the $\epsilon-$suboptimal control for the IHLQR problem.
\begin{proposition}
\label{sec3_prop5}
Consider the system \eqref{eq:singleagent} with the cost \eqref{eq:ihlqrcost}. If the functional $s(\bullet)$ is strictly convex for matrices $P$ and $\bar{P}\succ 0$ satisfying \eqref{eq:bigLMI} 
with $A- \frac{1}{2}\Upsilon(I,P)$ being Hurwitz, then  
the following statements are true.
\begin{enumerate}
    \item The function $q=\mathbf{x}^T(P-\bar{P})\mathbf{x}$ is a solving function for the Problem \ref{sec2_p3}.
    \item The control law \eqref{eq:controllaw} is $\epsilon-$suboptimal 
    with $\epsilon=\mathbf{x}_0^T \bar{P} \mathbf{x}_0- J^* $.
\end{enumerate}
\end{proposition}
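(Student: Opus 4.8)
The plan is to keep the control law \eqref{eq:controllaw} fixed (it is the feedback produced by the $q=\mathbf{x}^TP\mathbf{x}$ development, with $\bar{P}$ entering only to certify how far the closed-loop cost lies above the optimum). First I would exploit that \eqref{eq:bigLMI} is block diagonal, so its positive definiteness is equivalent to the simultaneous validity of its two diagonal blocks, namely \eqref{sec3_l1_p_lmi} and \eqref{eq:decomposedpbar}. Block \eqref{sec3_l1_p_lmi} is the strict-convexity certificate of Proposition \ref{prop:convexs}, which makes the ESIHLQR minimization well posed and returns \eqref{eq:controllaw}; together with the standing assumption that $A-\tfrac{1}{2}\Upsilon(I,P)$ is Hurwitz, Lemma \ref{sec3_l2} then yields a finite cost $J=\mathbf{x}_0^T Y\mathbf{x}_0$, where $Y\succeq\mathbf{0}$ is the unique solution of \eqref{sec3_l2_y_eq}, equivalently of the closed-loop Lyapunov equation $\bar{A}^TY+Y\bar{A}+\bar{Q}=\mathbf{0}$ with $\bar{A}=A-\tfrac{1}{2}\Upsilon(I,P)$ and $\bar{Q}=Q+\tfrac{1}{4}\Upsilon(P,P)$. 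Because this process (control plus a stable trajectory) is produced with no Krotov iteration, $q=\mathbf{x}^T(P-\bar{P})\mathbf{x}$ is a solving function, establishing statement (1); the bound derived below supplies the attendant $\epsilon$ and thereby closes Problem \ref{sec2_p3}.

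For statement (2) I would reduce $\epsilon$-suboptimality to a single matrix inequality. By the definition of $\epsilon$-suboptimality, setting $\epsilon=\mathbf{x}_0^T\bar{P}\mathbf{x}_0-J^*$ makes the requirement $J<J^*+\epsilon$ identical to $\mathbf{x}_0^T Y\mathbf{x}_0<\mathbf{x}_0^T\bar{P}\mathbf{x}_0$, so it suffices to prove $\bar{P}\succ Y$; this also forces $\epsilon>0$, since then $\mathbf{x}_0^T\bar{P}\mathbf{x}_0>J\geq J^*$. To obtain $\bar{P}\succ Y$ I would compare the Lyapunov equation for $Y$ with a Lyapunov inequality for $\bar{P}$: if $\bar{A}^T\bar{P}+\bar{P}\bar{A}+\bar{Q}\prec\mathbf{0}$, then with $S\triangleq-(\bar{A}^T\bar{P}+\bar{P}\bar{A}+\bar{Q})\succ\mathbf{0}$ subtraction gives $\bar{A}^T(\bar{P}-Y)+(\bar{P}-Y)\bar{A}+S=\mathbf{0}$, and since $\bar{A}$ is Hurwitz, Lemma \ref{sec3_l3} delivers $\bar{P}-Y=\int_{0}^{\infty}e^{\bar{A}^Tt}S\,e^{\bar{A}t}\,dt$, which is positive definite because $S\succ\mathbf{0}$.

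The crux is thus the closed-loop inequality $\bar{A}^T\bar{P}+\bar{P}\bar{A}+\bar{Q}\prec\mathbf{0}$, and here the second block \eqref{eq:decomposedpbar} enters. Substituting $\bar{A}$ and $\bar{Q}$, using $\bar{P}=\bar{P}^T$ and $\Upsilon(X,X)=(X+X^T)\Xi(X+X^T)$ with $\Xi=BR^{-1}B^T$, and completing the square in the $\Xi$-terms, I expect the left-hand side to collapse to
\begin{align*}
\bar{A}^T\bar{P}+\bar{P}\bar{A}+\bar{Q}=\Big(\Gamma(\bar{P})+\tfrac{1}{4}\Upsilon(P-\bar{P},P-\bar{P})\Big)-\tfrac{1}{4}\Upsilon(\bar{P},\bar{P}).
\end{align*}
The bracketed term is exactly the Schur complement, with respect to the block $4R\succ\mathbf{0}$, of the positive-definite matrix in \eqref{eq:decomposedpbar}, hence negative definite; the leftover $-\tfrac{1}{4}\Upsilon(\bar{P},\bar{P})$ is negative semidefinite because $\Xi\succeq\mathbf{0}$. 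Their sum is negative definite, which is precisely the inequality needed. Feeding this into the comparison step yields $\mathbf{x}_0^T Y\mathbf{x}_0<\mathbf{x}_0^T\bar{P}\mathbf{x}_0$, so \eqref{eq:controllaw} is $\epsilon$-suboptimal with $\epsilon=\mathbf{x}_0^T\bar{P}\mathbf{x}_0-J^*$.

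The main obstacle I anticipate is the bookkeeping of this last rearrangement: one must expand $\bar{A}^T\bar{P}+\bar{P}\bar{A}$ consistently with the paper's $\Upsilon$ conventions, check that the cross terms together with the $\bar{P}$-quadratic contribution assemble into $\tfrac{1}{4}\Upsilon(P-\bar{P},P-\bar{P})$ minus the spare $\tfrac{1}{4}\Upsilon(\bar{P},\bar{P})$, and confirm that this completed-square form coincides with the Schur complement of \eqref{eq:decomposedpbar}. Everything else, namely the block-diagonal splitting of \eqref{eq:bigLMI}, the reduction to $\bar{P}\succ Y$, and the appeal to Lemma \ref{sec3_l3}, is routine once the identity is verified.
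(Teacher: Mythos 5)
Your proposal is correct and follows essentially the same route as the paper: split the block-diagonal LMI \eqref{eq:bigLMI} into \eqref{sec3_l1_p_lmi} (convexity/finite cost via Lemma \ref{sec3_l2}) and \eqref{eq:decomposedpbar}, reduce $\epsilon$-suboptimality to $\bar{P}\succ Y$, and obtain that from the closed-loop Lyapunov comparison using the Hurwitz assumption. The only cosmetic difference is ordering: the paper first discards the $\tfrac{1}{4}\Upsilon(\bar P,\bar P)\succeq\mathbf{0}$ term from the Schur complement of \eqref{eq:decomposedpbar} and then substitutes \eqref{sec3_l2_y_eq}, whereas you complete the square into $\bar{A}^T\bar{P}+\bar{P}\bar{A}+\bar{Q}\prec\mathbf{0}$ first and then subtract the $Y$-equation — the algebraic content is identical.
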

\begin{proof}
The inequality \eqref{eq:decomposedpbar} can be expressed as
$
    A^T\bar{P}+\bar{P}A+Q  +\frac{1}{4} \Upsilon(P,P) + \frac{1}{4} \Upsilon(\bar{P},\bar{P}) - \frac{1}{2} \Upsilon(P,\bar{P}) \prec \mathbf{0}.
    $
Since $\Upsilon(\bar{P},\bar{P})\succeq \mathbf{0}$, the following inequality holds true
\begin{align}\label{eq:decomposedpbarequiv}
    A^T\bar{P}+\bar{P}A+Q  +\frac{1}{4} \Upsilon(P,P) - \frac{1}{2} \Upsilon(P,\bar{P}) \prec \mathbf{0}.
\end{align}
Substituting for $Q+ \frac{1}{4} \Upsilon(P,P)$ in \eqref{eq:decomposedpbarequiv} using \eqref{sec3_l2_y_eq} and simplifying it reads as
$  \left( A^T- \frac{1}{2}\Upsilon(P,I)\right) \left(\bar{P}-Y \right)  + \left(\bar{P}-Y \right)\left( A- \frac{1}{2}\Upsilon(I,P)\right)  \prec \mathbf{0}.
$
Since the matrix $A-\frac{1}{2}\Upsilon(I,P)$ is Hurwitz, then using the Lyapunov stability theorem, it follows that $\bar{P}-Y \succ \mathbf{0}$, which implies that
$J= \mathbf{x}_0^T Y \mathbf{x}_0 < \mathbf{x}_0^T \bar{P} \mathbf{x}_0.
$
\end{proof}
It is worth noting that the $\epsilon-$suboptimality of the control law \eqref{eq:controllaw} in Proposition \ref{sec3_prop5} relies on the inequality \eqref{eq:decomposedpbar} for some $P$ and $\bar{P}\succ 0$. The next result provides the necessary and sufficient condition for their existence. 
\begin{theorem}\label{thm:stablesystems}
The inequality \eqref{eq:decomposedpbar} is satisfied for some $P$ and $\bar{P}\succ 0$ if and only if $A$ is Hurwitz.
\end{theorem}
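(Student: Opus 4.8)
The plan is to prove both implications of the equivalence by exploiting the block structure of \eqref{eq:decomposedpbar} together with the Lyapunov characterization of Hurwitz matrices. Throughout I write $\Gamma(\bar P)=A^T\bar P+\bar P A+Q$, recall that $Q\succeq\mathbf 0$ and $R\succ\mathbf 0$, and take $\bar P$ symmetric (consistent with $\bar P\succ\mathbf 0$ and with $\bar P+\bar P^T=2\bar P$).

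For the \emph{necessity} direction (feasibility of \eqref{eq:decomposedpbar} forces $A$ to be Hurwitz) I would argue via principal submatrices. Since every principal submatrix of a positive definite matrix is positive definite, the positive definiteness of the block matrix in \eqref{eq:decomposedpbar} immediately yields $-\Gamma(\bar P)\succ\mathbf 0$, that is, $A^T\bar P+\bar P A+Q\prec\mathbf 0$. Because $Q\succeq\mathbf 0$, this gives $A^T\bar P+\bar P A\prec -Q\preceq\mathbf 0$, hence $A^T\bar P+\bar P A\prec\mathbf 0$ with $\bar P\succ\mathbf 0$, and the Lyapunov stability theorem forces $A$ to be Hurwitz. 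No off-diagonal information is needed; the top-left block alone carries the conclusion.

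For the \emph{sufficiency} direction (a Hurwitz $A$ admits a feasible pair) the idea is to choose $P$ so that the off-diagonal coupling vanishes and the LMI decouples. Setting $P=\bar P$ makes $P+P^T-\bar P-\bar P^T=\mathbf 0$, so \eqref{eq:decomposedpbar} reduces to the block-diagonal condition that $-\Gamma(\bar P)\succ\mathbf 0$ and $4R\succ\mathbf 0$; the latter holds since $R\succ\mathbf 0$, and it remains only to produce $\bar P\succ\mathbf 0$ with $-\Gamma(\bar P)\succ\mathbf 0$. Because $A$ is Hurwitz, Lemma \ref{sec3_l3} applied with $W=Q+I\succ\mathbf 0$ yields a unique symmetric $\bar P=\int_{0}^{\infty}e^{A^T t}(Q+I)e^{At}\,dt\succ\mathbf 0$ solving $A^T\bar P+\bar P A+(Q+I)=\mathbf 0$, whence $-\Gamma(\bar P)=-(A^T\bar P+\bar P A+Q)=I\succ\mathbf 0$. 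The pair $(P,\bar P)=(\bar P,\bar P)$ then satisfies \eqref{eq:decomposedpbar}, completing the construction.

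I expect no serious obstacle: the necessity is essentially a one-line submatrix argument, and the only genuine decision in sufficiency is how to eliminate the off-diagonal term. The mild subtlety is obtaining $\bar P\succ\mathbf 0$ and the \emph{strict} inequality $-\Gamma(\bar P)\succ\mathbf 0$ simultaneously, which is exactly why I pad $Q$ by $I$; solving $A^T\bar P+\bar P A+Q=\mathbf 0$ directly would only give $-\Gamma(\bar P)=\mathbf 0$. An alternative that keeps $P$ distinct from $\bar P$ would retain the off-diagonal block and absorb it through a Schur complement of $4R$, but the choice $P=\bar P$ sidesteps that computation entirely.
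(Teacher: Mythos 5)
Your proof is correct, and it differs from the paper's in two respects worth noting. For necessity, the paper takes the Schur complement of the $4R$ block to rewrite \eqref{eq:decomposedpbar} as the single inequality $A^T\bar{P}+\bar{P}A+\check{Q}\prec\mathbf{0}$ with $\check{Q}=Q+\tfrac{1}{4}\Upsilon(P+P^T-2\bar{P},P+P^T-2\bar{P})\succeq\mathbf{0}$, and then invokes Lyapunov; you instead read off $-\Gamma(\bar{P})\succ\mathbf{0}$ directly from the leading principal block, which is more elementary and avoids the Schur-complement computation altogether (the paper's form is the exact equivalent of the LMI, but a necessary condition is all that direction needs). For sufficiency, the paper's proof is silent --- its one-line argument really only establishes the ``only if'' part --- whereas you supply an explicit feasible pair: setting $P=\bar{P}$ annihilates the off-diagonal block so the LMI decouples into $-\Gamma(\bar{P})\succ\mathbf{0}$ and $4R\succ\mathbf{0}$, and solving $A^T\bar{P}+\bar{P}A+(Q+I)=\mathbf{0}$ via Lemma \ref{sec3_l3} yields $\bar{P}\succ\mathbf{0}$ with $-\Gamma(\bar{P})=I\succ\mathbf{0}$; the padding of $Q$ by $I$ is exactly the right move to secure strictness. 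In short, your argument is a complete proof of the stated equivalence, while the paper's, as written, only fully justifies one implication.
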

\begin{proof}
Denoting $\check{Q} = Q + \frac{1}{4}
    \Upsilon(P+P^T-2\bar{P},P+P^T-2\bar{P}) $, the inequality \eqref{eq:decomposedpbar} can be equivalently written as
$    A^T\bar{P}+\bar{P}A+\check{Q}
    \prec \mathbf{0}.
    $
Note that $\check{Q}\succeq \mathbf{0}$. Since $\bar{P}\succ\mathbf{0}$, the matrix $A$ should be Hurwitz.
\end{proof}

As shown in Theorem \ref{thm:stablesystems}, the results stated in Proposition \ref{sec3_prop5} are valid only for stable systems. In the following, we shall generalize these results for unstable systems in Proposition \ref{sec3_prop6}.


\begin{lemma}
\label{sec3_l4}
Let $\bar{A} \in \mathbb{R}^{n \times n}$ be a Hurwitz matrix. Define $\tilde{P} \in \mathbb{R}^{n \times n}$ to be the solution of the equation
$
\bar{A}^T \tilde{P}+\tilde{P}\bar{A}+I =\mathbf{0}.
$
The following statements are true.
\begin{enumerate}
    \item There exists a matrix $P_{\eta}$ which satisfies
\begin{equation}
\label{sec3_l4_eq1}
\bar{A}^TP_{\eta}+P_{\eta}\bar{A} - \eta I \prec \mathbf{0},
\end{equation}
where $\eta \in \mathbb{R}$.
\item Furthermore, $P_{\eta}+\eta {\tilde{P}} \succ \mathbf{0}$.
\end{enumerate}
\end{lemma}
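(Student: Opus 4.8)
The plan is to build $P_\eta$ explicitly as a scalar multiple of the given matrix $\tilde{P}$, which collapses both claims into one-line verifications. First I would invoke Lemma \ref{sec3_l3}: since $\bar{A}$ is Hurwitz, the equation $\bar{A}^T\tilde{P}+\tilde{P}\bar{A}+I=\mathbf{0}$ has a unique solution $\tilde{P}=\int_0^\infty e^{t\bar{A}^T}e^{t\bar{A}}\,dt$, and because $I\succ\mathbf{0}$ the same lemma further guarantees $\tilde{P}\succ\mathbf{0}$. This positive definiteness is the only structural fact the argument will need.

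For statement 1, I would simply set $P_\eta=(1-\eta)\tilde{P}$. Then, using linearity of the Lyapunov operator together with the defining equation for $\tilde{P}$, I get $\bar{A}^TP_\eta+P_\eta\bar{A}=(1-\eta)(\bar{A}^T\tilde{P}+\tilde{P}\bar{A})=-(1-\eta)I$, so that $\bar{A}^TP_\eta+P_\eta\bar{A}-\eta I=-(1-\eta)I-\eta I=-I\prec\mathbf{0}$, which is exactly \eqref{sec3_l4_eq1}. Any coefficient $(\delta-\eta)$ with $\delta>0$ works identically, producing $-\delta I$ on the right-hand side; I pick $\delta=1$ for cleanliness. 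Importantly, no sign restriction on $\eta$ is required: when $\eta>1$ the matrix $P_\eta$ is merely negative definite, which is harmless because statement 1 imposes no definiteness on $P_\eta$ itself.

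For statement 2, the same choice immediately gives $P_\eta+\eta\tilde{P}=(1-\eta)\tilde{P}+\eta\tilde{P}=\tilde{P}\succ\mathbf{0}$, closing the argument by the positive definiteness of $\tilde{P}$ already established.

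There is essentially no hard obstacle here; the only conceptual step is recognizing that taking $P_\eta$ proportional to $\tilde{P}$ lets the Hurwitz Lyapunov identity absorb the $-\eta I$ term uniformly across all real $\eta$, and that $\tilde{P}\succ\mathbf{0}$ is precisely the property statement 2 demands. If one instead tried to produce $P_\eta$ by solving a fresh Lyapunov equation with an $\eta$-dependent right-hand side, one would land on the same $(\delta-\eta)\tilde{P}$, so the proportional ansatz is not only the shortest route but effectively the canonical one.
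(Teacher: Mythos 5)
Your construction $P_\eta=(1-\eta)\tilde{P}$ is internally consistent and does verify the lemma \emph{as literally worded} (an existence claim), but it misses the actual content of the lemma, which is a universal implication: \emph{every} matrix $P_\eta$ satisfying $\bar{A}^TP_\eta+P_\eta\bar{A}-\eta I\prec\mathbf{0}$ must satisfy $P_\eta+\eta\tilde{P}\succ\mathbf{0}$. This is how the lemma is invoked downstream: in Proposition \ref{sec3_prop6} (and again in Proposition \ref{sec4_thm7}) the matrix playing the role of $P_\eta$ is the prescribed quantity $\bar{P}-Y$ arising from the convex program and the Lyapunov equation for the closed-loop cost, and the whole point is to conclude $\bar{P}-Y+\eta\tilde{P}\succ\mathbf{0}$, hence $J<\mathbf{x}_0^T(\bar{P}+\eta\tilde{P})\mathbf{x}_0$. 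A witness proportional to $\tilde{P}$ gives no information about that particular $P_\eta$, so your argument cannot support the place where the lemma is actually used.

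The paper's proof establishes the universal version: given any $P_\eta$ satisfying \eqref{sec3_l4_eq1}, set $V=\eta I-\bar{A}^TP_\eta-P_\eta\bar{A}\succ\mathbf{0}$, so that $\bar{A}^TP_\eta+P_\eta\bar{A}+(V-\eta I)=\mathbf{0}$. By the uniqueness and integral representation in Lemma \ref{sec3_l3}, $P_\eta=\int_0^\infty e^{t\bar{A}^T}(V-\eta I)e^{t\bar{A}}\,dt=P_v-\eta\tilde{P}$ with $P_v=\int_0^\infty e^{t\bar{A}^T}Ve^{t\bar{A}}\,dt\succ\mathbf{0}$, whence $P_\eta+\eta\tilde{P}=P_v\succ\mathbf{0}$. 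Your observation that $\tilde{P}\succ\mathbf{0}$ and your use of linearity of the Lyapunov operator are both fine; what is missing is the step that decomposes an \emph{arbitrary} solution of the strict inequality as $P_v-\eta\tilde{P}$ rather than exhibiting one convenient solution.
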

\begin{proof}
From Lemma \ref{sec3_l3}, the matrix $\tilde{P}$ always exists.
\begin{enumerate}
    \item The inequality \eqref{sec3_l4_eq1} can be written as $
\bar{A}^TP_{\eta}+P_{\eta}\bar{A}+V-\eta I = \mathbf{0}
$ for some matrix $V\succ \mathbf{0}$. 
The solution to the last equation, as per Lemma \ref{sec3_l3}, is then given as
$
P_{\eta}= \int_{0}^{\infty} \left( e^{t\bar{A}^T} \left( V- \eta I \right) e^{t\bar{A}} \right) dt.
$
\item The solution to the equation
$
\bar{A}^TP_v+P_v\bar{A}+V=\mathbf{0}
$
is given as
$
P_v = \int_{0}^{\infty} \left( e^{t\bar{A}^T} V e^{t\bar{A}} \right) dt,
$ and since $V \succ \mathbf{0}, P_v \succ \mathbf{0}$.
Consequently, the matrix $P_\eta$ can be expressed as $P_\eta = P_v - \eta \tilde{P}$.
Since $P_v \succ \mathbf{0}, P_\eta + \eta \tilde{P} \succ \mathbf{0}$.
\end{enumerate}
\end{proof}
\begin{proposition}
\label{sec3_prop6}
Consider the system \eqref{eq:singleagent} with the cost \eqref{eq:ihlqrcost}. If the functional $s(\bullet)$ is strictly convex for some matrix $P$, satisfying \eqref{sec3_l1_p_lmi} with $A- \frac{1}{2}\Upsilon(I,P)$ being Hurwitz, and some symmetric matrix $\bar{P}\succ 0$, whose solution is obtained by  solving the following convex optimization problem 
\begin{equation}\label{sec3_cop_eq1}
    \begin{gathered}
    \min\limits_{\bar{P}\succ\mathbf{0}} \eta \\
\text{ s.t. }
\begin{bmatrix}
-\Gamma(\bar{P}) + \eta I& \left( P+P^T-2\bar{P} \right) B \\
B^T\left(P+P^T -2\bar{P} \right)^T& 4R 
\end{bmatrix}
\succ \mathbf{0},
    \end{gathered}
\end{equation}
then  
the following statements are true.
\begin{enumerate}
    \item The function $q=\mathbf{x}^T(P-\bar{P})\mathbf{x}$ is a solving function for the Problem \ref{sec2_p3}.
    \item The control law \eqref{eq:controllaw} is $\epsilon-$suboptimal 
    with $\epsilon=  \mathbf{x}_0^T \left(\bar{P}+ \eta \tilde{P} \right) \mathbf{x}_0-J^* $,
    where $\tilde{P}$ is the solution of
$
\left( A^T- \frac{1}{2}\Upsilon(P,I) \right) \tilde{P}+ \tilde{P} \left( A- \frac{1}{2}\Upsilon(I,P) \right) +I = \mathbf{0}.
$
\end{enumerate}
\end{proposition}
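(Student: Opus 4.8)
The plan is to mirror the argument of Proposition \ref{sec3_prop5}, but to carry the scalar slack variable $\eta$ through every manipulation and then to invoke Lemma \ref{sec3_l4} to recover a definiteness conclusion that the bare Lyapunov inequality can no longer provide once $A$ is unstable. The first claim is essentially immediate: once the $\epsilon$-suboptimal law \eqref{eq:controllaw} is obtained directly (noniteratively) from the strictly convex minimization of $s$, the function $q=\mathbf{x}^T(P-\bar{P})\mathbf{x}$ qualifies as a solving function by definition. Hence the substance of the proof is the bound in the second claim.

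First I would apply the Schur complement lemma to the constraint of the convex program \eqref{sec3_cop_eq1}, eliminating the $4R$ block, to obtain $\Gamma(\bar{P})-\eta I+\frac{1}{4}\Upsilon(P-\bar{P},P-\bar{P})\prec\mathbf{0}$. Expanding $\Upsilon(P-\bar{P},P-\bar{P})=\Upsilon(P,P)-\Upsilon(P,\bar{P})-\Upsilon(\bar{P},P)+\Upsilon(\bar{P},\bar{P})$ and discarding the positive semidefinite summand $\frac{1}{4}\Upsilon(\bar{P},\bar{P})$ (which only strengthens $\prec\mathbf{0}$), this reduces, exactly as in Proposition \ref{sec3_prop5}, to
\[
A^T\bar{P}+\bar{P}A+Q+\tfrac{1}{4}\Upsilon(P,P)-\tfrac{1}{2}\Upsilon(P,\bar{P})-\eta I\prec\mathbf{0}.
\]

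Next I would substitute $Q+\frac{1}{4}\Upsilon(P,P)=-A^TY-YA+\frac{1}{2}\Upsilon(P,Y)$ from the cost equation \eqref{sec3_l2_y_eq} of Lemma \ref{sec3_l2}, which is legitimate because $A-\frac{1}{2}\Upsilon(I,P)$ is Hurwitz, so $J=\mathbf{x}_0^TY\mathbf{x}_0$ is well defined with $Y\succeq\mathbf{0}$. Collecting the $(\bar{P}-Y)$ terms then gives
\[
\bar{A}^T(\bar{P}-Y)+(\bar{P}-Y)\bar{A}-\eta I\prec\mathbf{0},\qquad \bar{A}:=A-\tfrac{1}{2}\Upsilon(I,P).
\]
The decisive step is to read this against the inequality \eqref{sec3_l4_eq1} of Lemma \ref{sec3_l4} with $P_\eta:=\bar{P}-Y$. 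Since $\bar{A}$ is Hurwitz, that lemma yields $P_\eta+\eta\tilde{P}\succ\mathbf{0}$ with $\tilde{P}$ the solution of $\bar{A}^T\tilde{P}+\tilde{P}\bar{A}+I=\mathbf{0}$, i.e. $\bar{P}-Y+\eta\tilde{P}\succ\mathbf{0}$. Hence $Y\prec\bar{P}+\eta\tilde{P}$ and $J=\mathbf{x}_0^TY\mathbf{x}_0<\mathbf{x}_0^T(\bar{P}+\eta\tilde{P})\mathbf{x}_0$, delivering $\epsilon=\mathbf{x}_0^T(\bar{P}+\eta\tilde{P})\mathbf{x}_0-J^*$.

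I expect the main obstacle to be precisely this last implication. For unstable $A$ the relaxed Lyapunov inequality carrying $-\eta I$ no longer forces $\bar{P}-Y\succ\mathbf{0}$ by itself, so the proof cannot terminate as in Proposition \ref{sec3_prop5} via a direct application of the Lyapunov stability theorem; the role of minimizing $\eta$ and of Lemma \ref{sec3_l4} is exactly to supply the correction term $\eta\tilde{P}$ that restores positive definiteness. I would also take care to verify that the $\Upsilon$-bookkeeping is consistent throughout, in particular that the nonsymmetric cross terms $\Upsilon(P,\bar{P})$ and $\Upsilon(P,Y)$ are treated via their symmetric parts inside the matrix inequalities, since this symmetrization is what lets the $\frac{1}{2}\Upsilon(P,\cdot)$ factors combine cleanly into $\bar{A}$ and $\bar{A}^T$.
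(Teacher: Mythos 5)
Your proposal is correct and follows essentially the same route as the paper's proof: reduce the LMI in \eqref{sec3_cop_eq1} via the Schur complement, substitute the cost equation \eqref{sec3_l2_y_eq} to arrive at the relaxed Lyapunov inequality in $\bar{P}-Y$ with the $-\eta I$ slack, and invoke Lemma \ref{sec3_l4} to obtain $\bar{P}-Y+\eta\tilde{P}\succ\mathbf{0}$. You merely spell out the algebra that the paper compresses into ``similar to the proof of Proposition \ref{sec3_prop5}'', and your closing remarks on the role of $\eta$ and the symmetrization of the cross terms are consistent with the paper's argument.
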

\begin{proof}
Firstly, it is straightforward that for some symmetric $\bar{P}\succ \mathbf{0}$, \eqref{eq:decomposedpbar} implies \eqref{sec3_cop_eq1}. Subsequently, similar to the proof of Proposition \ref{sec3_prop5}, the inequality in \eqref{sec3_cop_eq1} 
can be simplified to 
$\left( A- \frac{1}{2}\Upsilon(P,I) \right)^T \left(\bar{P}-Y \right)
 +\left(\bar{P}-Y \right) \left( A- \frac{1}{2}\Upsilon(I,P) \right)- \eta I \prec \mathbf{0}.
 $
Taking $\bar{A} = \left( A- \frac{1}{2}\Upsilon(I,P) \right)$, and $P_\eta = \bar{P}-Y$, then from Lemma 
\ref{sec3_l4}, it follows that $\left( \bar{P}-Y + \eta \tilde{P} \right) \succ \mathbf{0}$, which implies that 
$
J= \mathbf{x}_0^T Y \mathbf{x}_0 < \mathbf{x}_0^T \left( \bar{P}+ \eta \tilde{P} \right) \mathbf{x}_0.
$
\end{proof}
The steps for computing the solution to the Problem \ref{sec2_p3} are summarized in the Algorithm \ref{alg:epsilonsuboptimal}. 
\begin{algorithm}
\SetAlgoLined
\caption{Computation of $\epsilon$-suboptimal control law}
\label{alg:epsilonsuboptimal}
\KwData{$A, B, Q \succeq \mathbf{0}, R \succ \mathbf{0}, \mathbf{x}_0$}
\KwResult{$P ,\epsilon$}
    Solve the inequality \eqref{sec3_l1_p_lmi} for some stabilizing matrix $P$, in the sense that the closed-loop system matrix $A-\frac{1}{2} \Upsilon(I,P)$ is Hurwitz.\\
  \eIf{$A$ is Hurwitz}{
   Compute the matrix $\bar{P}\succ\mathbf{0}$ by solving the inequality \eqref{eq:decomposedpbar} for the above stabilizing matrix\;
   Compute the control signal $\mathbf{u} = -\frac{1}{2}R^{-1}B^T \left( P+P^T \right) \mathbf{x}$, and $\epsilon=\mathbf{x}_0^T \bar{P} \mathbf{x}_0-J^*$.
   }{
   Compute the matrix $\bar{P}\succ\mathbf{0}$ and $\eta$ by solving the convex optimization problem \eqref{sec3_cop_eq1} for the above stabilizing matrix\;
   Solve $\left( A^T- \frac{1}{2}\Upsilon(P,I) \right) \tilde{P}+ \tilde{P} \left( A- \frac{1}{2}\Upsilon(I,P) \right) +I = \mathbf{0}$ for $\tilde{P}$\;
   Compute the control signal $\mathbf{u} = -\frac{1}{2}R^{-1}B^T \left( P+P^T \right) \mathbf{x}$, and $\epsilon=  \mathbf{x}_0^T \left(\bar{P}+ \eta \tilde{P} \right) \mathbf{x}_0-J^*$.
  }
\end{algorithm}

\section{Suboptimal Consensus Protocol Design 
}
\label{sec4_scpd}
In this section, the suboptimal solution to the consensus problem for multiagent systems is derived for agents communicating over undirected graph topologies.
The solution to the Problem \ref{sec2_scdp} is computed in two major steps
\begin{enumerate}
\item reformulate the multiagent dynamics \eqref{eq:agentdynamicsdiffB} as the standard regulation problem in terms of the neighbouring state error dynamics, and
\item obtain the (possibly) non-square \textit{structured} feedback gain matrix acting on the error signal by computing another matrix, which is square.
\end{enumerate}
Due to the structured feedback gain matrix, computing the solution to the underlying control optimization problem is an NP-hard problem \cite{borrelli2008dist}. Using the technique developed in the previous section, we shall compute the structured feedback gain matrix and the upper bound on the cost via solving a convex optimization problem.

Some terminologies related to graphs and Laplacian matrices are now briefly presented, which shall be used in later subsections \cite{fblns}.
\begin{definition}[connection topology]
A connection topology between the agents or a graph $\mathcal{G}$ is defined as $\mathcal{G} = (\mathcal{V,C})$, where $\mathcal{V}$ is the set of nodes (or vertices) $\mathcal{V}=\{1,2, \ldots,N\}$ and $\mathcal{C}\subseteq \mathcal{V}\times \mathcal{V}$ the set of edges $(i,j)$ with $i\in\mathcal{V},j\in\mathcal{V}$. 
\end{definition}
\begin{definition}[Laplacian matrix]
The Laplacian matrix of a graph $\mathcal{G}$ is defined as ${L}(\mathcal{G}) = {D}(\mathcal{G}) - {C}(\mathcal{G})$, where ${D}(\mathcal{G})$ is the diagonal matrix of vertex degrees, and ${C}(\mathcal{G})$ denotes the $(0,1)-$adjacency matrix of a graph $\mathcal{G}$.
\end{definition}
The class of $\mathcal{K}^N_{n,m}(\mathcal{G})$ matrices is defined in the following \cite{borrelli2008dist}.
\begin{definition}\label{def:classofK}
$\mathcal{K}^N_{n,m} = \{{M}\in\mathbb{R}^{nN\times mN}| {M}_{ij} = \mathbf{0} \text{ if } (i,j)\notin\mathcal{C}, {M}_{ij} = {M}[(i-1)n:in,(j-1)m:jm], i,j=1,\ldots,N \}$.
\end{definition}
\subsection{Error Dynamics}
\label{sec4_scpd_sub_ed}
Let denote the vectors $\hat{\mathbf{x}}\in\mathbb{R}^{nN}$ and $\hat{\mathbf{u}}\in\mathbb{R}^{mN}$, which collect the state and inputs of the $N$ systems \eqref{eq:agentdynamicsdiffB}, then the overall system can be compactly expressed as
\begin{subequations}\label{eq:multiagentX}
\begin{align}\label{eq:combMultiX}
    \dot{\hat{\mathbf{x}}} = \hat{{A}}\hat{\mathbf{x}} + \hat{{B}}\hat{\mathbf{u}}, \hat{\mathbf{x}}(t_0) = \hat{\mathbf{x}}_{t_0},
\end{align}
{where $\hat{{A}} = I_N\otimes {A}, \hat{{B}} = \mathtt{blkdiag}({B}_i),i=1,\ldots, N$ with $\otimes$ and $\mathtt{blkdiag}(\bullet)$ denoting the Kronecker product and block-diagonal matrices, respectively. Similarly, the corresponding cost functional \eqref{sec2_ocn_cost} can be equivalently
represented as}
\begin{align}\label{eq:combMultiJX}
\hat{J}=\int_{t_0}^\infty \left(\hat{\mathbf{x}}^T\hat{Q}\hat{\mathbf{x}}+\hat{\mathbf{u}}^T\hat{R}\hat{\mathbf{u}}\right)dt, 
\end{align}
{with $\hat{Q} = L\otimes \underline{Q} \in \mathcal{K}^N_{n,n}(\mathcal{G})$ and $\hat{R} = I_N\otimes \underline{R}$. As a consequence, Problem \ref{sec2_scdp} now reads as
to compute the control law of the form} 
\begin{align}\label{eq:combMultiUX}
\hat{\mathbf{u}}(t) = \hat{{K}}^{\hat{\mathbf{x}}}\hat{\mathbf{x}}(t), 
\end{align}
\end{subequations}
where $\hat{{K}}^{\hat{\mathbf{x}}}\in \mathcal{K}^N_{m,n}(\mathcal{G})$ and the upper bound $\gamma$ such that $\hat{J}<\gamma$. 
\begin{lemma}\label{lem:propertyKx}
The matrix $\hat{{K}}^{\hat{\mathbf{x}}}$ in \eqref{eq:combMultiUX} satisfies $\sum_{j=1}^{N}\hat{{K}}^{\hat{\mathbf{x}}}_{ij}=\mathbf{0}, i=1,\ldots,N.$
\end{lemma}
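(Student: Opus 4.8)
The plan is to exploit the \emph{diffusive} structure of the consensus protocol, namely that each $\mathbf{u}_i$ depends only on the relative states $\mathbf{x}_i - \mathbf{x}_j$ for $j \in \mathcal{N}_i$, as prescribed by \eqref{sec2_ocn_ip}. The essential point is that this relative-state dependence forces each block row of $\hat{K}^{\hat{\mathbf{x}}}$ to annihilate the consensus subspace, and the row-block-sum condition is just the algebraic encoding of that fact. I want to emphasize at the outset that the claim is strictly stronger than the membership $\hat{K}^{\hat{\mathbf{x}}} \in \mathcal{K}^N_{m,n}(\mathcal{G})$, which only fixes the sparsity pattern; the proof must therefore invoke the diffusive form of the control law and not merely the structural class.

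The cleanest route I would take is a consensus-invariance argument. Let $\mathbf{1}_N \in \mathbb{R}^N$ denote the vector of all ones and evaluate the compact law $\hat{\mathbf{u}} = \hat{K}^{\hat{\mathbf{x}}}\hat{\mathbf{x}}$ on an arbitrary consensus configuration $\hat{\mathbf{x}} = \mathbf{1}_N \otimes \bar{\mathbf{x}}$ with $\bar{\mathbf{x}} \in \mathbb{R}^n$. On such a state every relative difference $\mathbf{x}_i - \mathbf{x}_j$ vanishes, so by linearity of $f_i$ each $\mathbf{u}_i = f_i(\mathbf{0}) = \mathbf{0}$, i.e. $\hat{K}^{\hat{\mathbf{x}}}(\mathbf{1}_N \otimes \bar{\mathbf{x}}) = \mathbf{0}$. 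Reading off the $i$-th block row gives $\left(\sum_{j=1}^N \hat{K}^{\hat{\mathbf{x}}}_{ij}\right)\bar{\mathbf{x}} = \mathbf{0}$, and since $\bar{\mathbf{x}}$ is arbitrary in $\mathbb{R}^n$ the bracketed block must itself vanish, yielding $\sum_{j=1}^N \hat{K}^{\hat{\mathbf{x}}}_{ij} = \mathbf{0}$ for each $i$.

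As an alternative I would give the direct block computation, which also serves as a sanity check on the indexing. Writing the linear diffusive law explicitly as $\mathbf{u}_i = \sum_{k \in \mathcal{N}_i} F_{ik}(\mathbf{x}_i - \mathbf{x}_k)$ for gains $F_{ik} \in \mathbb{R}^{m \times n}$ and collecting into $\hat{\mathbf{u}} = \hat{K}^{\hat{\mathbf{x}}}\hat{\mathbf{x}}$ identifies the diagonal block $\hat{K}^{\hat{\mathbf{x}}}_{ii} = \sum_{k \in \mathcal{N}_i} F_{ik}$, the off-diagonal blocks $\hat{K}^{\hat{\mathbf{x}}}_{ij} = -F_{ij}$ for $j \in \mathcal{N}_i$, and zero otherwise; summing over $j$ cancels the diagonal contribution against the off-diagonal terms. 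I do not anticipate a genuine obstacle: the result is essentially a restatement of the defining property of diffusive coupling. The only step requiring care is the bookkeeping between the neighbor-indexed gains $F_{ik}$ and the global block indices of $\hat{K}^{\hat{\mathbf{x}}}$, which is exactly what the consensus-invariance argument lets me sidestep.
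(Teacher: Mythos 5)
Your proof is correct. The paper's own proof is a one-line remark that the property follows ``by explicitly writing \eqref{sec2_ocn_ip} in matrix form,'' which is exactly your second, direct block computation: identifying $\hat{K}^{\hat{\mathbf{x}}}_{ii}=\sum_{k\in\mathcal{N}_i}F_{ik}$ and $\hat{K}^{\hat{\mathbf{x}}}_{ij}=-F_{ij}$ for $j\in\mathcal{N}_i$ and observing the telescoping cancellation. Your primary consensus-invariance argument --- evaluating $\hat{\mathbf{u}}=\hat{K}^{\hat{\mathbf{x}}}(\mathbf{1}_N\otimes\bar{\mathbf{x}})$ and letting $\bar{\mathbf{x}}$ range over $\mathbb{R}^n$ --- is an equivalent but cleaner packaging of the same fact: it buys you immunity from the neighbor-versus-global index bookkeeping and makes transparent that the lemma is precisely the statement that the consensus subspace lies in the kernel of $\hat{K}^{\hat{\mathbf{x}}}$, which is the property actually used later (in the proof of statement 5 of Theorem \ref{thm:x2e}, to kill the $\hat{K}^{\hat{\mathbf{x}}}(\mathbf{1}_N\otimes I_n)\mathbf{x}_N$ term). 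Your opening observation that the claim is strictly stronger than membership in $\mathcal{K}^N_{m,n}(\mathcal{G})$ and must invoke the diffusive form \eqref{sec2_ocn_ip} is a point the paper leaves implicit and is worth making explicit.
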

\begin{proof}
The above property can be readily shown by explicitly writing \eqref{sec2_ocn_ip} in matrix form for any connected graph.
\end{proof}
In \cite{jiao2019suboptimality}, the above problem is addressed by assuming that the upper bound $\gamma$ is \textit{a priori} given,  $\hat{{K}}^{\hat{\mathbf{x}}}_{ij} \forall i,j=1,\ldots,N$ are equal, and agents are homogeneous, i.e. $B_i=B, i =1,\ldots, N$ or equivalently $\tilde{B}=I_N\otimes B$. In this work, these assumptions are relaxed while synthesizing the feedback gain matrix even for homogeneous agents. In this context, the subsequent problem addressed in this paper is
\begin{problem}
\label{multi_ag_prob}
For the state dynamics \eqref{eq:combMultiX}, given a gain matrix $\hat{{K}}^{\hat{\mathbf{x}}}\in \mathcal{K}^N_{m,n}(\mathcal{G})$ such that the consensus is achieved, compute the upper bound $\gamma$ which satisfies $\hat{J}<\gamma$.
\end{problem}
It is worth noting that without considering the constraints 
on the feedback gain matrix $\hat{K}^{\hat{\mathbf{x}}}$ in \eqref{eq:combMultiUX}, the OCP in \eqref{eq:multiagentX} can be referred to as the centralized optimal control problem \cite{borrelli2008dist}. In that case, although the structure of the optimal control problem \eqref{eq:multiagentX} appears similar to that of Problem \ref{sec2_p3}, its solution as derived in the previous section cannot be used to solve the former problem because this solution relies on the fact that as $t_f\to\infty$ the vector $\mathbf{x}(t_f)=\mathbf{0}$, see (ESIHLQR), which may not hold in the case of \eqref{eq:combMultiX}-\eqref{eq:combMultiJX}.
To use the results derived for the single agent in the previous section to synthesize suboptimal controller for multiagent systems, we define a new state vector for the $i-$th agent as 
\begin{subequations}\label{eq:errorsignal}
\begin{align}\label{eq:newvector}
    \mathbf{e}_i = \mathbf{x}_i - \mathbf{x}_{i+1} , i=1,\ldots,N-1.
\end{align}
Then, the overall dynamics of a multiagent system can be equivalently represented by
\begin{align}\label{eq:maserror}
\dot{\mathbf{e}}= \tilde{A} \mathbf{e}+ \tilde{B} \hat{\mathbf{u}}, \mathbf{e}(t_0)= \mathbf{e}_0
\end{align}
where $\mathbf{e}=\mathtt{col}(\mathbf{e}_1, \mathbf{e}_2, \ldots, \mathbf{e}_{N-1}) = \mathtt{col}(\mathbf{x}_1 - \mathbf{x}_2, \mathbf{x}_2 - \mathbf{x}_3, \ldots, \mathbf{x}_{N-1} - \mathbf{x}_N) \in \mathbb{R}^{n(N-1)}, \hat{\mathbf{u}}= \mathtt{col}(\mathbf{u}_1, \mathbf{u}_2, \ldots, \mathbf{u}_N) \in \mathbb{R}^{mN}, \tilde{A}=I_{N-1}\otimes A$ and
\begin{align}\label{eq:btilde}
    \tilde{B} =
\begin{bmatrix}
B_1 & -B_2 & \cdots &\cdots&\cdots& \mathbf{0} \\
\mathbf{0} & B_2 & -B_3  & \cdots &\cdots & \mathbf{0}\\
\vdots &\vdots &\vdots  &\vdots &\vdots &\vdots \\
\mathbf{0}& \mathbf{0} & \cdots&\cdots& B_{N-1}&-B_{N}
\end{bmatrix}. 
\end{align}
\end{subequations}
\begin{theorem}\label{thm:x2e}
Let $\Delta_{nN}\in\mathbb{R}^{nN\times nN}$ is an upper triangular matrix  with all nonzero elements equal to $I_n$, then the following statements hold
\begin{enumerate}
\item $\Delta_{nN} = \Delta_{1N}\otimes I_n$.
    \item The vector $\hat{\mathbf{x}}$ can be expressed as \begin{align}\label{eq:transformation}
    \hat{\mathbf{x}} = \begin{bmatrix}
        \Delta_{n(N-1)} \\ \mathbf{0}
    \end{bmatrix}\mathbf{e} + (\mathbf{1}_N\otimes I_n)\mathbf{x}_N
\end{align}
where $\mathbf{1}_N$ is the $N-$dimensional vector with all entries equal to 1.
\item $\Delta_{nN}^T\hat{Q}\Delta_{nN} = \begin{bmatrix}
    \tilde{Q} & \mathbf{0}\\
    \mathbf{0} & \mathbf{0}
\end{bmatrix}$, where $\tilde{Q}\in\mathbb{R}^{n(N-1)\times n(N-1)}$.
\item The cost functional in \eqref{sec2_ocn_cost} can be compactly written as
\begin{align}\label{eq:costinE}
    \hat{J} = \int_{t_0}^\infty\left(\mathbf{e}^T\tilde{Q}\mathbf{e} + \hat{\mathbf{u}}^T\hat{R} \hat{\mathbf{u}}\right)dt.
\end{align}
\item The map from ${\mathbf{e}}$ to $\hat{\mathbf{u}}$, denoted by $\hat{K}^\mathbf{e}$, i.e. 
\begin{align}\label{eq:combmultiuine}
    \hat{\mathbf{u}} = \hat{K}^\mathbf{e} {\mathbf{e}},
\end{align}
with $\hat{K}^\mathbf{e} = \hat{K}^{\hat{\mathbf{x}}}\begin{bmatrix}
    \Delta_{n(N-1)}^T & \mathbf{0}^T
\end{bmatrix}^T$ is equivalent to \eqref{eq:combMultiUX}.
\end{enumerate}
\end{theorem}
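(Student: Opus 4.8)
The plan is to prove the five items essentially in the order listed, since each later item builds on the earlier ones, and to exploit throughout the single unifying fact that the \emph{consensus subspace} spanned by the columns of $\mathbf{1}_N\otimes I_n$ is annihilated by both the cost weight $\hat{Q}$ and the gain $\hat{K}^{\hat{\mathbf{x}}}$. Item~1 is immediate from the definition of the Kronecker product: replacing each scalar entry of the $N\times N$ all-ones upper-triangular matrix $\Delta_{1N}$ by that scalar times $I_n$ reproduces exactly the block upper-triangular matrix of $I_n$ blocks, i.e. $\Delta_{nN}=\Delta_{1N}\otimes I_n$. For item~2 I would telescope the definition \eqref{eq:newvector}: summing $\mathbf{e}_k=\mathbf{x}_k-\mathbf{x}_{k+1}$ from $k=i$ to $N-1$ gives $\mathbf{x}_i=\mathbf{x}_N+\sum_{k=i}^{N-1}\mathbf{e}_k$, and stacking over $i=1,\dots,N$ shows that block row $i$ of $\Delta_{n(N-1)}\mathbf{e}$ is exactly $\sum_{k=i}^{N-1}\mathbf{e}_k$, while the trailing zero block accounts for $\mathbf{x}_N$; adding the stacked copy $(\mathbf{1}_N\otimes I_n)\mathbf{x}_N$ of $\mathbf{x}_N$ then recovers \eqref{eq:transformation}.

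The key structural observation for items~3 and~4 is that, since $\mathcal{G}$ is connected and $L$ symmetric, $L\mathbf{1}_N=\mathbf{0}$, so $\hat{Q}(\mathbf{1}_N\otimes I_n)=(L\mathbf{1}_N)\otimes\underline{Q}=\mathbf{0}$. Because the last block column of $\Delta_{nN}=\Delta_{1N}\otimes I_n$ is precisely $\mathbf{1}_N\otimes I_n$, the last block row and column of $\Delta_{nN}^T\hat{Q}\Delta_{nN}$ vanish, which gives the block-diagonal form of item~3 with $\tilde{Q}$ the surviving top-left $n(N-1)\times n(N-1)$ block. For item~4 I would write $\hat{\mathbf{x}}=T\mathbf{e}+w$ with $T=\begin{bmatrix}\Delta_{n(N-1)}\\ \mathbf{0}\end{bmatrix}$ and $w=(\mathbf{1}_N\otimes I_n)\mathbf{x}_N$ from \eqref{eq:transformation}, and note that $\hat{Q}w=\mathbf{0}$ by the same Laplacian identity; hence $\hat{\mathbf{x}}^T\hat{Q}\hat{\mathbf{x}}=\mathbf{e}^T T^T\hat{Q}T\mathbf{e}$. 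I would then identify $T$ as the first $n(N-1)$ block columns of $\Delta_{nN}$ (the trailing block row being zero by upper-triangularity), so that $T^T\hat{Q}T$ is exactly the top-left block $\tilde{Q}$ of item~3, and substitution into \eqref{sec2_ocn_cost}--\eqref{eq:combMultiJX} yields \eqref{eq:costinE}.

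For item~5 I would again use $\hat{\mathbf{x}}=T\mathbf{e}+w$, so that $\hat{\mathbf{u}}=\hat{K}^{\hat{\mathbf{x}}}\hat{\mathbf{x}}=\hat{K}^{\hat{\mathbf{x}}}T\mathbf{e}+\hat{K}^{\hat{\mathbf{x}}}w$. The second term vanishes: block row $i$ of $\hat{K}^{\hat{\mathbf{x}}}(\mathbf{1}_N\otimes I_n)$ equals $\sum_{j=1}^N\hat{K}^{\hat{\mathbf{x}}}_{ij}=\mathbf{0}$ by Lemma~\ref{lem:propertyKx}, so $\hat{K}^{\hat{\mathbf{x}}}w=\mathbf{0}$. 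Recognizing that $\begin{bmatrix}\Delta_{n(N-1)}^T & \mathbf{0}^T\end{bmatrix}^T=T$, this leaves $\hat{\mathbf{u}}=\hat{K}^{\hat{\mathbf{x}}}T\mathbf{e}=\hat{K}^\mathbf{e}\mathbf{e}$, establishing equivalence with \eqref{eq:combMultiUX}.

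I do not expect any single step to be hard; the work is bookkeeping with block-partitioned Kronecker matrices. The one point requiring care---and the natural place for an error---is the clean identification of $T$ with the leading block columns of $\Delta_{nN}$, which is what lets the congruence of item~3 be reused verbatim in item~4 and avoids recomputing $T^T\hat{Q}T$ from scratch. The conceptual crux, worth stating explicitly, is that both $\hat{Q}$ and $\hat{K}^{\hat{\mathbf{x}}}$ annihilate the consensus direction $\mathbf{1}_N\otimes I_n$ (via $L\mathbf{1}_N=\mathbf{0}$ and Lemma~\ref{lem:propertyKx}, respectively), which is exactly why the absolute reference state $\mathbf{x}_N$ drops out of both the cost and the feedback law and the whole problem collapses onto the reduced error coordinate $\mathbf{e}$.
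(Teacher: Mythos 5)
Your proposal is correct and follows essentially the same route as the paper's proof: item 1 by direct inspection of the Kronecker product, item 2 by recognizing the right-hand side of \eqref{eq:transformation} as $\Delta_{nN}$ acting on the concatenation of $\mathbf{e}$ and $\mathbf{x}_N$ (your telescoping sum is just a more explicit rendering of this), item 3 via $L\mathbf{1}_N=\mathbf{0}$ together with the Kronecker factorization $\Delta_{nN}^T\hat{Q}\Delta_{nN}=(\Delta_{1N}^TL\Delta_{1N})\otimes\underline{Q}$, item 4 by substitution, and item 5 by killing the $\mathbf{x}_N$ term with Lemma \ref{lem:propertyKx}. Your explicit framing of the common mechanism --- that both $\hat{Q}$ and $\hat{K}^{\hat{\mathbf{x}}}$ annihilate the consensus direction $\mathbf{1}_N\otimes I_n$ --- is a nice unification that the paper leaves implicit, but it is not a different argument.
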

\begin{proof}
\textit{1)} The equality can be shown by direct substitution.\\
    \textit{2)} Concatenating the two vectors on the right hand side (RHS) of \eqref{eq:transformation}, the distribution matrix of this concatenated vector becomes $\Delta_{nN}$, i.e. the RHS expression reads as $\Delta_{nN}\begin{bmatrix}
        \mathbf{e}^T & \mathbf{x}_N^T
    \end{bmatrix}^T$. Substituting \eqref{eq:newvector} in this RHS expression yields the left hand side (LHS) of \eqref{eq:transformation}. \\
    \textit{3)} Substituting $\hat{Q} = L\otimes \underline{Q}$ on the LHS, and using statement 1) and the property of product of two Kronecker products \cite{graham82}, the LHS can be written as $(\Delta_{1N}^T\otimes I_n)(L\otimes \underline{Q})(\Delta_{1N}\otimes I_n) = (\Delta_{1N}^T L\Delta_{1N})\otimes (I_n\underline{Q}I_n)$. Since every row sum and column sum of $L$ is zero, the last $n$ rows and columns of the LHS equal zero.\\
    \textit{4)} Since \eqref{eq:combMultiJX} is equivalent to \eqref{sec2_ocn_cost}, substituting  \eqref{eq:transformation} into \eqref{eq:combMultiJX} readily gives \eqref{eq:costinE}.\\
    \textit{5)} Substituting \eqref{eq:transformation} in \eqref{eq:combMultiUX} yields
    \begin{align}\label{eq:expanduwithx}
     \hat{\mathbf{u}} = \hat{K}^{\hat{\mathbf{x}}}\begin{bmatrix}
    \Delta_{n(N-1)}^T & \mathbf{0}^T
\end{bmatrix}^T\mathbf{e} + \hat{K}^{\hat{\mathbf{x}}}(\mathbf{1}_N\otimes I_n)\mathbf{x}_N   
    \end{align}
    From Lemma \ref{lem:propertyKx}, the second term in \eqref{eq:expanduwithx} equals zero.
\end{proof}
Thanks to Theorem \ref{thm:x2e}, Problem \ref{sec2_scdp} can  equivalently be stated as to determine the gain matrix $\hat{K}^\mathbf{e}$ and the upper bound $\gamma$ such that $\hat{J}<\gamma$.
\begin{remark}
\label{rem1}
It is worth noting that the gain matrix $\hat{K}^{\mathbf{e}}$ inherits the desired sparsity pattern of $\hat{K}^{\hat{\mathbf{x}}}$. Nevertheless, synthesizing the control law of the form \eqref{eq:combmultiuine} minimizing the functional \eqref{eq:costinE} subject to \eqref{eq:maserror} is an NP-hard problem because other than a sparsity pattern, which is readily available from the Definition \ref{def:classofK}, additional structural conditions on nonzero elements of the feedback gain matrix, i.e. $\hat{K}^{\hat{\mathbf{x}}}_{ij}, \forall (i,j)\in\mathcal{C}$,  due to the communication topology also need to be satisfied in order to implement the control law \eqref{sec2_ocn_ip}. \end{remark}
\subsection{Suboptimal Consensus Protocol Design}\label{se4_scpd_sub}

In this subsection, a suboptimal consensus protocol is synthesized using Krotov sufficient conditions and based on the results of the previous section. 
The feedback matrix of the closed-loop multiagent system defined by  \eqref{eq:maserror} and \eqref{eq:combmultiuine} is proposed as
\begin{equation}
\label{P_intro}
    \hat{K}^{\mathbf{e}}=-\hat{R}^{-1}\tilde{B}^T \hat{P},
\end{equation}
where $\hat{P} \in \mathbb{R}^{n(N-1) \times n(N-1)}$. It is worth noting that with the help of \eqref{P_intro}, the consensus protocol synthesis problem in terms of the matrix $\hat{K}^{\mathbf{e}}$ is now parametrized in terms of a square matrix $\hat{P}$. Since the matrix $\hat{K}^{\mathbf{e}}$ is required to satisfy the structural conditions as mentioned in Remark \ref{rem1}, the same needs to be translated onto the design matrix $\hat{P}$ such that for some structured feedback matrix $\hat{K}^{\mathbf{e}}$, \eqref{P_intro} is satisfied.

 
\begin{lemma}\label{lem:Pcond}
{\color{black}For any matrix $\hat{{K}}^{\hat{\mathbf{x}}}\in \mathcal{K}^N_{m,n}(\mathcal{G})$, 
\eqref{P_intro} admits a solution under the following conditions}
\begin{enumerate}
    \item $m \leq n$,
    \item $B_i, i=1,2,\ldots,N$ is a full rank matrix.
\end{enumerate}
\end{lemma}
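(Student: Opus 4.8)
The plan is to regard \eqref{P_intro} as a linear matrix equation in the unknown $\hat{P}$ and to characterize its solvability through the range of $\tilde{B}^T$. Since $\hat{R}=I_N\otimes\underline{R}$ with $\underline{R}\succ\mathbf{0}$ is invertible, \eqref{P_intro} is equivalent to $\tilde{B}^T\hat{P}=-\hat{R}\hat{K}^{\mathbf{e}}$, where $\hat{K}^{\mathbf{e}}$ is the gain determined from the given $\hat{K}^{\hat{\mathbf{x}}}$ through Theorem \ref{thm:x2e}. Reading this equation column by column, a solution $\hat{P}$ exists if and only if every column of $-\hat{R}\hat{K}^{\mathbf{e}}$ lies in $\mathrm{range}(\tilde{B}^T)$; equivalently, since $\mathrm{range}(\tilde{B}^T)^{\perp}=\ker\tilde{B}$, the matrix $-\hat{R}\hat{K}^{\mathbf{e}}$ must be orthogonal to $\ker\tilde{B}$. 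I would therefore first describe $\ker\tilde{B}$ and then verify this orthogonality using the structure of $\hat{K}^{\mathbf{e}}$.

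The two hypotheses enter precisely at the construction stage. Because $m\le n$ and each $B_i$ has full rank, $B_i$ has full column rank, so $B_i^TB_i$ is invertible, $B_i^{\dagger}=(B_i^TB_i)^{-1}B_i^T$ is a genuine left inverse, and $B_i^T$ is surjective onto $\mathbb{R}^m$. Exploiting the bidiagonal block pattern of $\tilde{B}^T$ implied by \eqref{eq:btilde} --- whose $i$-th block row couples only the $(i-1)$-th and $i$-th block rows of $\hat{P}$ --- I would solve for the block rows $\Pi_1,\dots,\Pi_{N-1}$ of $\hat{P}$ recursively: the first block row fixes $B_1^T\Pi_1$, each subsequent block row fixes $B_i^T(\Pi_i-\Pi_{i-1})$, and surjectivity of every $B_i^T$ guarantees that each individual block equation is solvable, with the remaining freedom living in $\ker B_i^T=\mathrm{range}(B_i)^{\perp}$ when $m<n$.

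The main obstacle is that the bidiagonal structure produces $N$ block equations for only $N-1$ block-row unknowns, so after the recursion one compatibility condition remains, namely the equation associated with the last agent's input $\mathbf{u}_N$. Showing that this residual equation is automatically satisfied is the crux, and it is exactly where the diffusive, relative-state structure must be used rather than the full-rank hypotheses. Concretely, $\ker\tilde{B}$ is isomorphic to $\bigcap_{i=1}^{N}\mathrm{range}(B_i)$ through $z=(B_1^{\dagger}w,\dots,B_N^{\dagger}w)$ with $w\in\bigcap_i\mathrm{range}(B_i)$, so the required consistency reduces to $z^{T}\hat{R}\hat{K}^{\mathbf{e}}=\mathbf{0}$ for all such $z$. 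When the input images intersect trivially (sufficiently heterogeneous $B_i$), $\ker\tilde{B}=\{\mathbf{0}\}$, $\tilde{B}^T$ is surjective, and solvability is immediate; the delicate case is the homogeneous one, where I would discharge the condition by invoking the structural properties of $\hat{K}^{\mathbf{e}}$ inherited from $\hat{K}^{\hat{\mathbf{x}}}\in\mathcal{K}^N_{m,n}(\mathcal{G})$ via Theorem \ref{thm:x2e} together with the row-sum identity of Lemma \ref{lem:propertyKx}, which force the weighted column combination of $\hat{K}^{\mathbf{e}}$ selected by $\ker\tilde{B}$ to vanish. Once this compatibility is confirmed, the recursively obtained $\Pi_1,\dots,\Pi_{N-1}$ assemble into a matrix $\hat{P}\in\mathbb{R}^{n(N-1)\times n(N-1)}$ solving \eqref{P_intro}, completing the argument.
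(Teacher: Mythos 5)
Your overall strategy---read \eqref{P_intro} column by column, observe that solvability is equivalent to the columns of $-\hat{R}\hat{K}^{\mathbf{e}}$ lying in $\mathrm{range}(\tilde{B}^T)=(\ker\tilde{B})^{\perp}$, and then solve the block rows of $\hat{P}$ by a forward recursion using surjectivity of each $B_i^T$ (which is exactly where $m\le n$ and the full-rank hypothesis enter)---is sound, and it is in fact more explicit than the paper's argument. The paper instead vectorizes each block equation as $\left(-\underline{R}^{-1}B_i^T\otimes I_n\right)$ acting on the entries of $\hat{P}_{i,j}$ (equations \eqref{eq:mappinglinear}) and concludes solvability from $\mathrm{rank}\left(\underline{R}^{-1}B_i^T\otimes I_n\right)=mn$ (full row rank when $m<n$, invertibility when $m=n$); it treats the $N$ block equations independently and never confronts the fact that there are only $N-1$ block rows of $\hat{P}$ to solve for. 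You correctly isolate that residual $N$-th equation as the crux.

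However, the step where you ``discharge'' the residual compatibility condition is a genuine gap. You claim that Lemma \ref{lem:propertyKx} together with $\hat{K}^{\hat{\mathbf{x}}}\in\mathcal{K}^N_{m,n}(\mathcal{G})$ forces the weighted combination of the block rows of $\hat{K}^{\mathbf{e}}$ selected by $\ker\tilde{B}$ to vanish. Lemma \ref{lem:propertyKx} is a statement about the block-\emph{row} sums of $\hat{K}^{\hat{\mathbf{x}}}$ (it encodes that each $\mathbf{u}_i$ depends only on relative states), whereas the compatibility condition $z^T\hat{R}\hat{K}^{\mathbf{e}}=\mathbf{0}$ for $z\in\ker\tilde{B}$ constrains a sum \emph{across} block rows, i.e.\ certain column combinations of $\hat{K}^{\hat{\mathbf{x}}}$; the two coincide only when the edge gains are symmetric. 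Concretely, take $N=2$, $n=m=1$, $B_1=B_2=\underline{R}=1$, $u_1=a(x_1-x_2)$, $u_2=b(x_2-x_1)$: then $\hat{K}^{\mathbf{e}}=\mathtt{col}(a,-b)$ while $-\hat{R}^{-1}\tilde{B}^T\hat{P}=\mathtt{col}(-p,p)$, so \eqref{P_intro} is solvable only if $a=b$, even though $\hat{K}^{\hat{\mathbf{x}}}\in\mathcal{K}^2_{1,1}(\mathcal{G})$ and Lemma \ref{lem:propertyKx} holds for every $a,b$. Hence in the homogeneous case (where $\ker\tilde{B}\cong\bigcap_i\mathrm{range}(B_i)\neq\{\mathbf{0}\}$) the residual equation can genuinely fail, and no argument of the kind you sketch can close it for an \emph{arbitrary} member of the class. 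The paper's own proof silently skips this point; your proposal has the merit of exposing it, but as written it asserts rather than proves the decisive step, and the assertion does not hold at the stated level of generality.
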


\begin{proof}
Representing the block matrix $\hat{P}$ as
\begin{align*}
    &\hat{P}=\begin{bmatrix}
        \hat{P}_{1,1} & \hat{P}_{1,2}&\cdots& \hat{P}_{1,N-1}\\
        \hat{P}_{1,2}&\hat{P}_{2,2}&\cdots& \hat{P}_{2,N-1}\\
        \vdots&\vdots&\cdots&\vdots\\
        \hat{P}_{N-1,1} & \hat{P}_{N-1,2} &\cdots& \hat{P}_{N-1,N-1}
    \end{bmatrix} 
\end{align*} 
with $\hat{P}_{i,j} \in \mathbb{R}^{n \times n}$, $\hat{P}_{i,j}(\cdot,\cdot
)\in\mathbb{R}$ and similarly for $\hat{K}^{\mathbf{e}}_{i,j} \in \mathbb{R}^{m \times n}$ and $B_{i} \in \mathbb{R}^{n \times m}$, \eqref{P_intro} can be explicitly written as 
\begin{subequations}\label{eq:mappinglinear}
	\begin{gather}
		\left( \left(- \underline{R}^{-1}B_1^T \right) \otimes I_{n \times n} \right)  \begin{bmatrix}
			\hat{P}_{1,j}(1,1)\\\hat{P}_{1,j}(1,2)\\ \vdots\\ \hat{P}_{1,j}(n,n)
		\end{bmatrix} =  \begin{bmatrix}
			\hat{K}^{\mathbf{e}}_{1,j}(1,1)\\\hat{K}^{\mathbf{e}}_{1,j}(1,2)\\ \vdots\\ \hat{K}^{\mathbf{e}}_{1,j}(m,n)
		\end{bmatrix}, \label{eq:mappinglinear_a}\\
		\begin{multlined}
			\left( \left(- \underline{R}^{-1}B_i^T \right)\otimes I_{n \times n} \right) \begin{bmatrix}
				\hat{P}_{i,j}(1,1)-\hat{P}_{i-1,j}(1,1)\\
				\hat{P}_{i,j}(1,2)-\hat{P}_{i-1,j}(1,2)\\ \vdots\\ \hat{P}_{i,j}(n,n)-\hat{P}_{i-1,j}(n,n)
			\end{bmatrix} \\=  \begin{bmatrix}
				\hat{K}^{\mathbf{e}}_{i,j}(1,1)\\\hat{K}^{\mathbf{e}}_{i,j}(1,2)\\ \vdots\\ \hat{K}^{\mathbf{e}}_{i,j}(m,n)
			\end{bmatrix}; 
			\text{ for $1<i \leq N-1$}, \label{eq:mappinglinear_b}
		\end{multlined}\\
		\left( \left(-\underline{R}^{-1}B_N^T\right) \otimes I_{n \times n}  \right) \begin{bmatrix}
			\hat{P}_{N-1,j}(1,1)\\\hat{P}_{N-1,j}(1,2)\\ \vdots\\ \hat{P}_{N-1,j}(n,n)
		\end{bmatrix} =  \begin{bmatrix}
			\hat{K}^{\mathbf{e}}_{N,j}(1,1)\\\hat{K}^{\mathbf{e}}_{N,j}(1,2)\\ \vdots\\ \hat{K}^{\mathbf{e}}_{N,j}(m,n)
		\end{bmatrix},\label{eq:mappinglinear_c}
	\end{gather}
\end{subequations}
%
where $j=1,2,\ldots ,N-1$.

\textit{Case $1$: ($m<n$).}
 Since $B_i$ is a full rank matrix, it implies
 $\mathtt{rank}(B_i)=\mathtt{rank}(B_i^T)=\min(m,n)=m$. Also, $\mathtt{rank}(\underline{R}^{-1}B_i^T)=\mathtt{rank}(B_i^T)=m$. Note that in \eqref{eq:mappinglinear}, $\underline{R}^{-1}B_i^T \otimes I_{n \times n}\in\mathbb{R}^{mn\times nn}$ and
 using the rank property of the Kronecker product,
    $\mathtt{rank}(\underline{R}^{-1}B_i^T \otimes I_{n \times n})= mn$, which implies that 
    \eqref{eq:mappinglinear} admits infinite solutions.

    \textit{Case $2$: (m=n).} Under this case, $\left(\underline{R}^{-1}B_i^T\right) \otimes I_{n \times n} \in \mathbb{R}^{n^2 \times n^2}$ is an invertible matrix. Hence, \eqref{eq:mappinglinear_a} and \eqref{eq:mappinglinear_c} admit the unique solution, whereas \eqref{eq:mappinglinear_b} admits infinite solutions.
\end{proof}

\begin{corollary}\label{cor:linetopo}
For the line topology as shown in Fig. \ref{sec4_f1_lt}, $\hat{{K}}^{\hat{\mathbf{x}}}\in \mathcal{K}^N_{m,n}(\mathcal{G})$ for any block-diagonal matrix $\hat{P}$.
\begin{figure}[ht]
\includegraphics[width= \columnwidth]{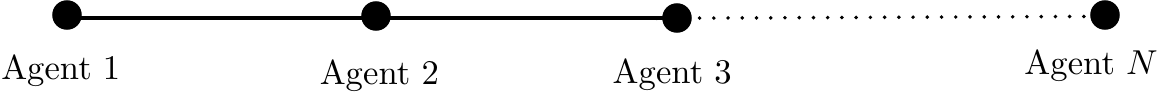}
\caption{Line Topology of agent interaction}
\label{sec4_f1_lt}
\end{figure}
\end{corollary}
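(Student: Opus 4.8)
The plan is to track how the block-sparsity of $\hat{P}$ propagates through the two linear maps that connect $\hat{P}$ to $\hat{{K}}^{\hat{\mathbf{x}}}$: first the map $\hat{P}\mapsto\hat{K}^{\mathbf{e}}=-\hat{R}^{-1}\tilde{B}^T\hat{P}$ in \eqref{P_intro}, and then the change of coordinates relating $\hat{K}^{\mathbf{e}}$ to $\hat{{K}}^{\hat{\mathbf{x}}}$ supplied by Theorem \ref{thm:x2e}. The key structural observation is that for the line topology the matrix $\tilde{B}$ in \eqref{eq:btilde} is block bidiagonal, so $\tilde{B}^T$ is lower block bidiagonal, and left-multiplication by $\hat{R}^{-1}=I_N\otimes\underline{R}^{-1}$ preserves this block pattern.

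First I would write $\hat{P}=\mathtt{blkdiag}(\hat{P}_{1,1},\ldots,\hat{P}_{N-1,N-1})$ with $\hat{P}_{i,i}\in\mathbb{R}^{n\times n}$ and multiply out $\hat{K}^{\mathbf{e}}=-\hat{R}^{-1}\tilde{B}^T\hat{P}$ block by block. Because $\hat{R}^{-1}\tilde{B}^T$ carries only the diagonal blocks $\underline{R}^{-1}B_j^T$ and the subdiagonal blocks $-\underline{R}^{-1}B_j^T$, and $\hat{P}$ is block-diagonal, the product $\hat{K}^{\mathbf{e}}$ again has nonzero entries only on its block diagonal and block subdiagonal; equivalently, each $\mathbf{u}_j$ depends only on $\mathbf{e}_{j-1}$ and $\mathbf{e}_j$. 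Substituting the definition $\mathbf{e}_i=\mathbf{x}_i-\mathbf{x}_{i+1}$ from \eqref{eq:newvector} then expresses each $\mathbf{u}_j$ in terms of $\mathbf{x}_{j-1},\mathbf{x}_j,\mathbf{x}_{j+1}$ only, so that the coefficient blocks $\hat{{K}}^{\hat{\mathbf{x}}}_{j,k}$ vanish unless $k\in\{j-1,j,j+1\}$; equivalently, the gain $\hat{{K}}^{\hat{\mathbf{x}}}$ determined through $\hat{K}^{\mathbf{e}}=\hat{{K}}^{\hat{\mathbf{x}}}\begin{bmatrix}\Delta_{n(N-1)}^T & \mathbf{0}^T\end{bmatrix}^T$ in Theorem \ref{thm:x2e} is block tridiagonal.

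The final step is to recognise that the block-tridiagonal sparsity pattern is exactly the pattern $(i,j)\in\mathcal{C}$ with $j\in\{i-1,i,i+1\}$ of the line graph in Fig. \ref{sec4_f1_lt}, so that by Definition \ref{def:classofK} the matrix $\hat{{K}}^{\hat{\mathbf{x}}}$ lies in $\mathcal{K}^N_{m,n}(\mathcal{G})$. I expect the only delicate point to be the bookkeeping of block indices at the two boundaries $j=1$ and $j=N$, where only a single neighbouring error variable is present and the combination of subdiagonal and diagonal contributions degenerates; verifying there that the block row sums $\sum_{k}\hat{{K}}^{\hat{\mathbf{x}}}_{j,k}=\mathbf{0}$ hold, as guaranteed by Lemma \ref{lem:propertyKx}, provides a convenient consistency check that the diagonal and off-diagonal blocks have been collected correctly.
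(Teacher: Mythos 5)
Your proposal is correct and follows essentially the same route as the paper: both arguments rest on the observation that $\tilde{B}^T$ in \eqref{eq:btilde} is lower block bidiagonal, so that \eqref{P_intro} with a block-diagonal $\hat{P}$ forces $\hat{K}^{\mathbf{e}}$ into the lower block bidiagonal pattern required by the line topology. Your version is merely more explicit at the last step, carrying the structure back from $\hat{K}^{\mathbf{e}}$ to $\hat{K}^{\hat{\mathbf{x}}}$ via $\mathbf{e}_i=\mathbf{x}_i-\mathbf{x}_{i+1}$ and checking the zero block-row sums of Lemma \ref{lem:propertyKx}, which the paper leaves implicit.
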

\begin{proof}
For the line topology, the structure of the $\hat{K}^{\mathbf{e}}$ matrix is given as
\begin{equation*}
\hat{K}^{\mathbf{e}}=
 \begin{bmatrix}
 \bullet &\cdots & \cdots &\cdots & \mathbf{0}\\
 \bullet & \bullet& \cdots &\cdots&\mathbf{0}  \\
 \mathbf{0}& \bullet & \bullet & \cdots & \mathbf{0}\\
 \cdots &\cdots &\cdots &\cdots & \cdots \\
 \vdots &\vdots &\vdots &\vdots &\vdots   \\
 \mathbf{0}&\mathbf{0}&\cdots &\mathbf{0}& \bullet
 \end{bmatrix}.
\end{equation*}
From \eqref{eq:btilde} and \eqref{P_intro}, 
if $\hat{P}$ is a block-diagonal matrix then $\hat{K}^{\mathbf{e}}$ will have
the desired structure as above.
\end{proof}

One of the main consequences of Lemma \ref{lem:Pcond} is that the equation \eqref{eq:mappinglinear} reveals the structural requirements imposed by a network topology on $\hat{K}^\mathbf{e}$, which is fully satisfied by the block matrix elements of $\hat{P}$. 
Let $\mathbb{P}$ denote the set of all $\hat{P}$ matrices satisfying the network structural requirements on $\hat{K}^{\mathbf{e}}$ such that \eqref{P_intro} is satisfied.
Another main result of this paper is given in the next Proposition providing
sufficient conditions for addressing the suboptimal control design Problem \ref{sec2_scdp} in terms of the solution of a convex optimization problem.
 
\begin{proposition}
\label{sec4_thm7}
Consider the multiagent system \eqref{eq:maserror} with the cost \eqref{eq:costinE}. If the functional $s=\frac{\partial q}{\partial \mathbf{e}} \left( \tilde{A}\mathbf{e} + \tilde{B} \hat{\mathbf{u}} \right)+ \mathbf{e}^T \tilde{Q} \mathbf{e}+ \hat{\mathbf{u}} \hat{R} \hat{\mathbf{u}}$ with \eqref{eq:combmultiuine}-\eqref{P_intro} is strictly convex for a symmetric matrix $\hat{P}$, a positive-definite matrix $\hat{\bar{P}}$, whose solution is obtained by solving
the following convex optimization problem
\begin{align}
      \label{thm7cop1}
     \begin{gathered}
    \min\limits_{\hat{P},\hat{\bar{P}}\succ \mathbf{0}} \eta \\
    \text { s.t. } \hat{P} \in \mathbb{P}, \\
\left[\begin{array}{@{}c|c@{}}
  \begin{matrix}
 \bar{\Gamma}(\hat{{P}})& \hat{{P}}\tilde{B} \\
   \left( \hat{{P}}\tilde{B} \right)^T& \hat{R}
  \end{matrix}
  &  \mathbf{0} \\
\hline
  \mathbf{0} &
  \begin{matrix}
  - \bar{\Gamma}(\hat{\bar{P}}) +\eta I& \left(\hat{P} -\hat{\bar{P}} \right) \tilde{B} \\
   \left( \left(\hat{P} -\hat{\bar{P}} \right) \tilde{B} \right)^T& \hat{R}
  \end{matrix}
\end{array}\right]
\succ \mathbf{0},
    \end{gathered}
\end{align}
where $\bar{\Gamma}(\hat{P}) \triangleq \tilde{A}^T\hat{P}+\hat{P}\tilde{A}+\tilde{Q}$,
such that $\left(  \tilde{A}-\tilde{B} \hat{R}^{-1}\tilde{B}^T \hat{P} \right)$ is Hurwitz,
then the following statements are true.
\begin{enumerate}
\item The function $q=\mathbf{e}^T \left(\hat{P}- \hat{\bar{P}} \right) \mathbf{e}$ is a solving function for the problem.
\item The control input $\hat{\mathbf{u}}= \hat{K}^{\mathbf{e}}\mathbf{e}=-\hat{R}^{-1} \tilde{B}^T \hat{P} \mathbf{e}$ solves the Problem \ref{sec2_scdp} with $\gamma = \mathbf{e}_0^T\left( \hat{\bar{P}}+ \eta \tilde{P}_e \right) \mathbf{e}_0$, where $\tilde{P}_e$ is the unique positive semidefinite solution of the Lyapunov equation
\begin{equation}
   \left( \tilde{A}- \tilde{B} \hat{R}^{-1} \tilde{B}^T \hat{P} \right)^T \tilde{P}_e + \tilde{P}_e \left( \tilde{A}- \tilde{B} \hat{R}^{-1} \tilde{B}^T \hat{P} \right)=-I.
   \label{thm7_pe_eq}
\end{equation}
\end{enumerate}
\end{proposition}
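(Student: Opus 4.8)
The plan is to transport the single-agent argument of Proposition \ref{sec3_prop6} to the reformulated error system \eqref{eq:maserror} with cost \eqref{eq:costinE}, exploiting the block-diagonal structure of the LMI \eqref{thm7cop1} to separate the strict-convexity requirement (first block) from the suboptimality certificate (second block). First I would substitute the proposed law $\hat{\mathbf{u}}=-\hat{R}^{-1}\tilde{B}^T\hat{P}\mathbf{e}$ into \eqref{eq:maserror} and \eqref{eq:costinE}. Since $\hat{P}$ is symmetric, this produces the autonomous closed loop $\dot{\mathbf{e}}=\bar{A}\mathbf{e}$ with $\bar{A}=\tilde{A}-\tilde{B}\hat{R}^{-1}\tilde{B}^T\hat{P}$ and running weight $\tilde{Q}+\hat{P}\tilde{B}\hat{R}^{-1}\tilde{B}^T\hat{P}$. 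Because $\bar{A}$ is assumed Hurwitz, the error dynamics force $\mathbf{e}\to\mathbf{0}$, so consensus is achieved, and by the same argument as in Lemma \ref{sec3_l2} (equivalently, Theorem \ref{sec3_thm3} together with Lemma \ref{sec3_l3}) the cost is finite and equals $\hat{J}=\mathbf{e}_0^T Y_e\mathbf{e}_0$, where $Y_e\succeq\mathbf{0}$ is the unique solution of $\bar{A}^T Y_e+Y_e\bar{A}+\tilde{Q}+\hat{P}\tilde{B}\hat{R}^{-1}\tilde{B}^T\hat{P}=\mathbf{0}$. Taking the Schur complement of the first diagonal block of \eqref{thm7cop1} with respect to $\hat{R}$ gives $\bar{\Gamma}(\hat{P})-\hat{P}\tilde{B}\hat{R}^{-1}\tilde{B}^T\hat{P}\succ\mathbf{0}$, which is exactly the strict convexity of $s$; this, with the Hurwitz hypothesis, certifies that $q=\mathbf{e}^T(\hat{P}-\hat{\bar{P}})\mathbf{e}$ is a solving function, establishing statement 1.

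For statement 2 the remaining task is to prove $\hat{\bar{P}}+\eta\tilde{P}_e\succ Y_e$, since this yields $\hat{J}=\mathbf{e}_0^T Y_e\mathbf{e}_0<\mathbf{e}_0^T(\hat{\bar{P}}+\eta\tilde{P}_e)\mathbf{e}_0=\gamma$. I would take the Schur complement of the second diagonal block of \eqref{thm7cop1} to obtain $\bar{\Gamma}(\hat{\bar{P}})+(\hat{P}-\hat{\bar{P}})\tilde{B}\hat{R}^{-1}\tilde{B}^T(\hat{P}-\hat{\bar{P}})-\eta I\prec\mathbf{0}$, expand the quadratic term, and discard the positive semidefinite summand $\hat{\bar{P}}\tilde{B}\hat{R}^{-1}\tilde{B}^T\hat{\bar{P}}$, which preserves the strict inequality. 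Eliminating $\tilde{Q}+\hat{P}\tilde{B}\hat{R}^{-1}\tilde{B}^T\hat{P}$ via the Lyapunov equation for $Y_e$, grouping the surviving terms around $\hat{\bar{P}}-Y_e$, and using the symmetry of $\hat{P}$, the expression collapses exactly to $\bar{A}^T(\hat{\bar{P}}-Y_e)+(\hat{\bar{P}}-Y_e)\bar{A}-\eta I\prec\mathbf{0}$. This is precisely \eqref{sec3_l4_eq1} with the identifications $P_\eta=\hat{\bar{P}}-Y_e$ and $\bar{A}$ Hurwitz, so invoking Lemma \ref{sec3_l4}(2), with $\tilde{P}_e$ the unique solution of \eqref{thm7_pe_eq}, gives $\hat{\bar{P}}-Y_e+\eta\tilde{P}_e\succ\mathbf{0}$ and hence the claimed bound.

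The constraint $\hat{P}\in\mathbb{P}$ sits orthogonally to this chain of inequalities: by Lemma \ref{lem:Pcond} it guarantees that $\hat{K}^{\mathbf{e}}=-\hat{R}^{-1}\tilde{B}^T\hat{P}$ retains the sparsity and coupling pattern needed to realize a diffusive protocol \eqref{sec2_ocn_ip}, so the computed law genuinely solves Problem \ref{sec2_scdp} rather than a centralized relaxation. That the cost measured in the $\mathbf{e}$-coordinates coincides with $\hat{J}$ of \eqref{sec2_ocn_cost} is furnished by Theorem \ref{thm:x2e}, so the bound on $\mathbf{e}_0^T(\hat{\bar{P}}+\eta\tilde{P}_e)\mathbf{e}_0$ is indeed a bound on the original consensus cost.

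I expect the main obstacle to be the bookkeeping in the middle step: correctly expanding $(\hat{P}-\hat{\bar{P}})\tilde{B}\hat{R}^{-1}\tilde{B}^T(\hat{P}-\hat{\bar{P}})$, matching its cross terms against those generated when the $Y_e$-Lyapunov equation is substituted in, and verifying that the symmetry of $\hat{P}$ makes every off-diagonal contribution reassemble into the factored form $\bar{A}^T(\cdot)+(\cdot)\bar{A}$ with no residual terms left over. Everything else is a faithful, matrix-valued echo of Proposition \ref{sec3_prop6}.
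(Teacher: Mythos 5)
Your proposal is correct and follows essentially the same route as the paper: reduce the second diagonal block of \eqref{thm7cop1} via the Schur complement and the Lyapunov equation for the closed-loop cost weight $Y$ to the inequality $\bar{A}^T(\hat{\bar{P}}-Y)+(\hat{\bar{P}}-Y)\bar{A}-\eta I\prec\mathbf{0}$, then invoke Lemma \ref{sec3_l4} to conclude $\hat{\bar{P}}-Y+\eta\tilde{P}_e\succ\mathbf{0}$ and hence the bound $\gamma$. You merely spell out the algebra (expansion of the quadratic term, discarding $\hat{\bar{P}}\tilde{B}\hat{R}^{-1}\tilde{B}^T\hat{\bar{P}}$, regrouping using the symmetry of $\hat{P}$) that the paper compresses into the phrase ``similar to that in Proposition \ref{sec3_prop6},'' and you correctly get the sign of the final definiteness conclusion, which the paper's text misstates.
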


\begin{proof}
Since $\hat{P} \in \mathbb{P}$, the matrix $\hat{K}^{\mathbf{e}}=-\hat{R}^{-1}\tilde{B}^T \hat{P}$ will have the desired sparsity pattern and structural conditions as per the topology requirements (see Remark \ref{rem1}). The second block of LMI can be reduced to
$\left( \tilde{A}- \tilde{B} \hat{R}^{-1} \tilde{B}^T \hat{P}\right)^T \left(
\hat{\bar{P}}-Y \right)
 +\left(
\hat{\bar{P}}-Y \right)\left( A- \tilde{B} \hat{R}^{-1} \tilde{B}^T \hat{P} \right)- \eta I \prec \mathbf{0}
$
similar to that in Proposition \ref{sec3_prop6}, where $Y$ is the weight matrix associated with the cost $\hat{J}=\mathbf{e}_0^T Y \mathbf{e}_0$. Since $\tilde{A}- \tilde{B} \hat{R}^{-1} \tilde{B}^T \hat{P}$ is Hurwitz, using Lemma \ref{sec3_l4}, we have
$
\hat{\bar{P}}-Y +\eta \tilde{P}_e \prec \mathbf{0} \implies \hat{J}= \mathbf{e}_0^T Y \mathbf{e}_0< \mathbf{e}_0^T\left( \hat{\bar{P}}+ \eta \tilde{P}_e \right) \mathbf{e}_0=\gamma.
$
\end{proof}

The results derived in this subsection shall now be used to provide a solution to Problem \ref{multi_ag_prob}, where a consensus protocol is given \textit{a priori}.
\begin{proposition}
\label{thm8}
For any $\hat{K}^{\hat{\mathbf{x}}} \in \mathcal{K}^N_{m,n}(\mathcal{G})$ which results in consensus, the upper bound $\gamma$ on the cost is $\gamma= \mathbf{e}_0^T\left( \hat{\bar{P}}+ \eta \tilde{P}_e \right) \mathbf{e}_0$ where $\hat{\bar{P}}$ is obtained by solving the following convex optimization problem
\begin{align}
      \label{thm8cop}
     \begin{gathered}
    \min \eta \\
    \text { s.t. } \tilde{B}^T \hat{P}=-\hat{R}\hat{K}^{\mathbf{e}}, \\
\left[\begin{array}{@{}c|c@{}}
  \begin{matrix}
 \bar{\Gamma}(\hat{{P}})& \hat{{P}}\tilde{B} \\
   \left( \hat{{P}}\tilde{B} \right)^T& \hat{R}
  \end{matrix}
  &  \mathbf{0} \\
\hline
  \mathbf{0} &
  \begin{matrix}
  - \bar{\Gamma} \left(\hat{\bar{P}} +\eta I \right)& \left(\hat{P} -\hat{\bar{P}} \right) \tilde{B} \\
   \left( \left(\hat{P} -\hat{\bar{P}} \right) \tilde{B} \right)^T& \hat{R}
  \end{matrix}
\end{array}\right]
\succ \mathbf{  0},
    \end{gathered}
\end{align}
and $\tilde{P}_e$ is obtained by solving
$
 \left( \tilde{A}- \tilde{B} \hat{R}^{-1} \tilde{B}^T \hat{P} \right)^T \tilde{P}_e + \tilde{P}_e \left( \tilde{A}- \tilde{B} \hat{R}^{-1} \tilde{B}^T \hat{P} \right)+I =\mathbf{0}.$
\end{proposition}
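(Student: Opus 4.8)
The plan is to run the argument in parallel with the proof of Proposition \ref{sec4_thm7} (which itself reuses Proposition \ref{sec3_prop6}); the only genuine change is the role played by $\hat{P}$, which is no longer a free design variable ranging over $\mathbb{P}$ but is instead pinned down so as to reproduce the prescribed protocol through the affine equality $\tilde{B}^T\hat{P}=-\hat{R}\hat{K}^{\mathbf{e}}$ appearing in \eqref{thm8cop}. First I would note that, since the given $\hat{K}^{\hat{\mathbf{x}}}\in\mathcal{K}^N_{m,n}(\mathcal{G})$ results in consensus, Theorem \ref{thm:x2e} transfers it to the equivalent gain $\hat{K}^{\mathbf{e}}$ acting on the error system \eqref{eq:maserror}, and consensus ($\mathbf{x}_i-\mathbf{x}_j\to\mathbf{0}$) is equivalent to $\mathbf{e}(t)\to\mathbf{0}$. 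Hence the closed-loop error matrix $\tilde{A}-\tilde{B}\hat{R}^{-1}\tilde{B}^T\hat{P}$ is Hurwitz for every $\hat{P}$ solving the affine equation, whose existence is exactly the solvability of \eqref{P_intro} and is guaranteed under the hypotheses of Lemma \ref{lem:Pcond}.

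Next I would establish finiteness and a quadratic representation of the cost. Because the closed loop is Hurwitz, the functional \eqref{eq:costinE} is finite and, exactly as in the proof of Proposition \ref{sec4_thm7}, takes the form $\hat{J}=\mathbf{e}_0^T Y\mathbf{e}_0$ with $Y\succeq\mathbf{0}$ the unique solution of the Lyapunov equation associated with the closed-loop error dynamics. Moreover the first diagonal block of the LMI in \eqref{thm8cop} is precisely the multiagent lift of the strict-convexity certificate \eqref{sec3_l1_p_lmi}, so it certifies that $q=\mathbf{e}^T(\hat{P}-\hat{\bar{P}})\mathbf{e}$ is a solving function for the error-system problem and, in particular, that $\hat{K}^{\mathbf{e}}$ retains the required sparsity and structural pattern (Remark \ref{rem1}) thanks to the constraint $\tilde{B}^T\hat{P}=-\hat{R}\hat{K}^{\mathbf{e}}$.

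For the bound itself I would mirror the reduction in Proposition \ref{sec3_prop6}: taking the Schur complement of the second diagonal block of \eqref{thm8cop} and substituting the Lyapunov equation satisfied by $Y$ collapses that block to
\begin{equation*}
\left(\tilde{A}-\tilde{B}\hat{R}^{-1}\tilde{B}^T\hat{P}\right)^T\left(\hat{\bar{P}}-Y\right)+\left(\hat{\bar{P}}-Y\right)\left(\tilde{A}-\tilde{B}\hat{R}^{-1}\tilde{B}^T\hat{P}\right)-\eta I\prec\mathbf{0}.
\end{equation*}
Setting $\bar{A}=\tilde{A}-\tilde{B}\hat{R}^{-1}\tilde{B}^T\hat{P}$ and $P_{\eta}=\hat{\bar{P}}-Y$ and invoking Lemma \ref{sec3_l4} then yields $\hat{\bar{P}}-Y+\eta\tilde{P}_e\succ\mathbf{0}$, where $\tilde{P}_e$ is the unique positive semidefinite solution of the Lyapunov equation stated in the proposition. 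Evaluating this strict inequality in the quadratic form at $\mathbf{e}_0$ gives $\hat{J}=\mathbf{e}_0^T Y\mathbf{e}_0<\mathbf{e}_0^T\left(\hat{\bar{P}}+\eta\tilde{P}_e\right)\mathbf{e}_0=\gamma$, which is the claim.

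The step I expect to be the main obstacle is not the bound — which is a near-verbatim transcription of Proposition \ref{sec3_prop6} — but rather the feasibility and mutual consistency of the two constraints in \eqref{thm8cop}: one must argue that, for the prescribed $\hat{K}^{\mathbf{e}}$, there is a \emph{symmetric} $\hat{P}$ simultaneously satisfying the affine equation $\tilde{B}^T\hat{P}=-\hat{R}\hat{K}^{\mathbf{e}}$, rendering the first block positive definite, and keeping $\tilde{A}-\tilde{B}\hat{R}^{-1}\tilde{B}^T\hat{P}$ Hurwitz. The Hurwitz property is handed to us by the consensus hypothesis, but tying it to the symmetry of $\hat{P}$ and to strict convexity requires care. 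I would also flag that the entry $-\bar{\Gamma}(\hat{\bar{P}}+\eta I)$ written in \eqref{thm8cop} should be read as $-\bar{\Gamma}(\hat{\bar{P}})+\eta I$, in order to be the genuine analogue of the optimization \eqref{thm7cop1} and to make the Schur-complement reduction above come out correctly.
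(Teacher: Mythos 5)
Your proposal is correct and follows exactly the route the paper intends: the paper states Proposition \ref{thm8} without an explicit proof, leaving it as the direct analogue of Proposition \ref{sec4_thm7} (itself a transcription of Proposition \ref{sec3_prop6}), with the only change being that $\hat{P}$ is pinned down by the affine constraint $\tilde{B}^T\hat{P}=-\hat{R}\hat{K}^{\mathbf{e}}$ and the Hurwitz property of $\tilde{A}-\tilde{B}\hat{R}^{-1}\tilde{B}^T\hat{P}=\tilde{A}+\tilde{B}\hat{K}^{\mathbf{e}}$ is inherited from the consensus hypothesis rather than imposed. Your reduction of the second diagonal block via the Lyapunov equation for $Y$, the appeal to Lemma \ref{sec3_l4} to get $\hat{\bar{P}}-Y+\eta\tilde{P}_e\succ\mathbf{0}$, and your reading of $-\bar{\Gamma}(\hat{\bar{P}}+\eta I)$ as $-\bar{\Gamma}(\hat{\bar{P}})+\eta I$ (consistent with \eqref{thm7cop1}) are all exactly what the paper's argument requires.
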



\color{black}
The steps for computing the suboptimal consensus protocol and the associated bound for any topology of agent-interaction are summarised in Algorithm \ref{alg2}. The steps for solving Problem \ref{multi_ag_prob} are summarised in Algorithm \ref{alg3}.

\begin{algorithm}
\SetAlgoLined
\caption{Computation of suboptimal consensus protocol}
\label{alg2}
\KwData{$A, B_i, \underline{Q} \succeq \mathbf{0}, \underline{R}  \succ \mathbf{0}, \mathbf{x}_{i0}$}
\KwResult{$\hat{K}^{\mathbf{e}} ,\gamma$}
    Compute the matrices $\tilde{A}$, $\tilde{B}$, 
 $\tilde{Q}$ and $\hat{R}$ following Theorem \ref{thm:x2e}.  \\
Obtain the conditions on the elements of the block matrix $\hat{P}$  
in accordance with the given agent-interaction topology
using \eqref{eq:mappinglinear}.\\
Solve the convex optimization problem \eqref{thm7cop1} in Proposition \ref{sec4_thm7} with the conditions on $\hat{P}$ as determined in Step $2$ above to obtain the triplet $\{\hat{P},\hat{\bar{P}} \succ \mathbf{0}, \eta \}$. \\
Compute the matrix $\hat{K}^{\mathbf{e}}=- \hat{R}^{-1}\tilde{B}^T \hat{P}$ and the corresponding bound $\gamma= \mathbf{e}_0^T \left(\hat{\bar{P}}+ \eta \tilde{P}_e \right) \mathbf{e}_0$
where $\tilde{P}_e$ is obtained by solving \eqref{thm7_pe_eq}.
\end{algorithm}
\begin{algorithm}
\SetAlgoLined
\caption{Computation of the upper bound on the cost for a given consensus protocol}
\label{alg3}
\KwData{$A, B_i, \underline{Q} \succeq \mathbf{0}, \underline{R}  \succ \mathbf{0}, \mathbf{x}_{i0}$, and a consensus protocol $\hat{K}^{\hat{\mathbf{x}}} \in \mathcal{K}^N_{m,n}(\mathcal{G})$.}
\KwResult{$\gamma$}
Compute 
$
    \hat{K}^\mathbf{e} = \hat{K}^{\hat{\mathbf{x}}}\begin{bmatrix}
    \Delta_{n(N-1)}^T & \mathbf{0}^T
\end{bmatrix}^T
$
as per Theorem \ref{thm:x2e}.\\
Solve the convex optimization problem \eqref{thm8cop} and the subsequent equation in Proposition \ref{thm8} to obtain $\gamma = \mathbf{e}_0^T\left( \hat{\bar{P}}+ \eta \tilde{P}_e \right) \mathbf{e}_0$ as the required bound satisfying $\hat{J}<\gamma$.\\
\end{algorithm}

\section{Numerical Results}
In this section, the application of  proposed techniques along with a  comparison with the existing results from the literature is demonstrated through numerical simulations. Firstly the usage of the results for suboptimal LQR design is demonstrated in subsection \ref{sec:suba} through two examples, where Example \ref{ex2} reports a comparison of the proposed approach with the existing result showcasing insights on the boundedness of the cost function for a stable scalar system. Subsequently, subsection \ref{sec:subb} presents numerical simulation results for multiagent systems under the different network topology.
\label{sec5_ne}
\subsection{Suboptimal LQR Design}\label{sec:suba}
\begin{example}[Open-loop Marginally Stable System]
\label{ex1}
Consider the system $\dot{x}=u$ with $x(0)=3$ and the cost $ { J =\int_{0}^{\infty}  (x^2 +u^2) dt }$. Compute $\epsilon$-suboptimal control law. For this example, the optimal cost is $J^*=9$.
\end{example}
\begin{sol*}
\normalfont

The inequalities, as per Proposition \ref{sec3_prop5}, which are needed to be solved for the parameters $p$ and $\bar{p}>0$ (these parameters are denoted in small letters for scalar systems), are given as
\begin{align*}
1-p^2 &>0  \\
-1-(p-\bar{p})^2 &>0.
\end{align*}
It is easily verified that these inequalities do not admit a solution, this is expected because the open-loop system is not stable, and hence convex optimization problem \eqref{sec3_cop_eq1}, as per Proposition \ref{sec3_prop6}, is solved. The solution of problem \eqref{sec3_cop_eq1} is computed to be
\begin{align*}
\eta=1, p=\bar{p}=0.64,  \tilde{p}=0.78.
\end{align*}
Correspondingly, $\epsilon= x_0^2(\bar{p}+\eta \tilde{p})-J^*=3.78$.
Thus, the control law $u=-0.64 x$ is $3.78$-suboptimal. The cost with this control input is computed to be $J=9.89$.
\end{sol*}

As mentioned in Section \ref{sec1_intro}, usually, an upper bound on the cost needs to be specified at the outset for using the existing results, which is based upon the following result from \cite{jiao2019suboptimality}.
\begin{theorem}
\label{sec5_thm8}
Consider the system \eqref{eq:singleagent} with cost functional \eqref{eq:ihlqrcost}.
Assume that the pair $(A,B)$ is stabilizable, and let $\epsilon >0$. Suppose that there exists a positive-definite $P$ satisfying 
\begin{align*}
 A^TP+PA- PBR^{-1} B^TP+Q &\prec \mathbf{0}      \\
 \mathbf{x}_0^T P \mathbf{x}_0 & < J^*+\epsilon.   \nonumber
\end{align*}
Then, the control input $\mathbf{u} \triangleq -R^{-1}B^TP \mathbf{x}$ results in a stable closed-loop system, i.e., $A-BR^{-1}B^TP$ is Hurwitz, and the corresponding cost $J$ is less than $J^*+\epsilon$.
\end{theorem}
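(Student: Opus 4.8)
The plan is to adapt the completion-of-square and Lyapunov-comparison argument already used in the proofs of Propositions \ref{sec3_prop5} and \ref{sec3_prop6}, now applied to the exact algebraic Riccati inequality in the hypothesis. First I would denote the closed-loop matrix $A_{cl} \triangleq A - BR^{-1}B^TP$ and rewrite the given inequality in terms of it. Computing $A_{cl}^TP + PA_{cl} = A^TP + PA - 2PBR^{-1}B^TP$ and substituting for $A^TP + PA$ in the hypothesis, the Riccati inequality becomes $A_{cl}^TP + PA_{cl} + \left(Q + PBR^{-1}B^TP\right) \prec \mathbf{0}$. Since $Q \succeq \mathbf{0}$ and $PBR^{-1}B^TP \succeq \mathbf{0}$, this yields $A_{cl}^TP + PA_{cl} \prec \mathbf{0}$; with $P \succ \mathbf{0}$, the Lyapunov stability theorem immediately gives that $A_{cl}$ is Hurwitz, which is the first assertion of the theorem.

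Next I would compute the cost incurred by the proposed law. Substituting $\mathbf{u} = -R^{-1}B^TP\mathbf{x}$ into \eqref{eq:ihlqrcost} reduces the running cost to $\mathbf{x}^T\bar{Q}\mathbf{x}$ with $\bar{Q} \triangleq Q + PBR^{-1}B^TP \succeq \mathbf{0}$, while the closed-loop dynamics become $\dot{\mathbf{x}} = A_{cl}\mathbf{x}$. Because $A_{cl}$ is Hurwitz, Lemma \ref{sec3_l3} (equivalently, the specialization of Lemma \ref{sec3_l2} and Theorem \ref{sec3_thm3}) guarantees that the cost is finite and equals $J = \mathbf{x}_0^TY\mathbf{x}_0$, where $Y \succeq \mathbf{0}$ is the unique solution of the Lyapunov equation $A_{cl}^TY + YA_{cl} + \bar{Q} = \mathbf{0}$.

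The decisive step is to compare the certificate $P$ with the exact cost matrix $Y$. Writing the Riccati inequality from the first paragraph as $A_{cl}^TP + PA_{cl} + \bar{Q} = -S$ for some $S \succ \mathbf{0}$ and subtracting the Lyapunov equation for $Y$, I obtain $A_{cl}^T(P-Y) + (P-Y)A_{cl} = -S \prec \mathbf{0}$. Since $A_{cl}$ is Hurwitz, Lemma \ref{sec3_l3} forces $P - Y \succ \mathbf{0}$, exactly as in the proof of Proposition \ref{sec3_prop5}. Hence $J = \mathbf{x}_0^TY\mathbf{x}_0 < \mathbf{x}_0^TP\mathbf{x}_0$, and combining with the second hypothesis $\mathbf{x}_0^TP\mathbf{x}_0 < J^* + \epsilon$ yields $J < J^* + \epsilon$, completing the proof. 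I expect the only delicate points to be the sign bookkeeping in the completion of the square and verifying that the semidefiniteness $\bar{Q} \succeq \mathbf{0}$ is enough to preserve the \emph{strict} Lyapunov inequality $A_{cl}^TP + PA_{cl} \prec \mathbf{0}$; the comparison principle itself is routine given the earlier lemmas.
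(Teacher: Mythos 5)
Your proof is correct. Note, however, that the paper itself offers no proof of Theorem~\ref{sec5_thm8}: it is quoted verbatim as an existing result from \cite{jiao2019suboptimality}, introduced purely for comparison with the authors' own conditions, so there is no in-paper argument to measure yours against. What you have written is the standard and complete justification: the identity $A_{cl}^TP+PA_{cl}+\bar{Q}=A^TP+PA-PBR^{-1}B^TP+Q$ with $\bar{Q}=Q+PBR^{-1}B^TP$ turns the strict Riccati inequality directly into $A_{cl}^TP+PA_{cl}\prec-\bar{Q}\preceq\mathbf{0}$, so the strictness worry you raise at the end is a non-issue; the finiteness of the cost and the comparison $P-Y\succ\mathbf{0}$ via Lemma~\ref{sec3_l3} then mirror exactly the mechanism the authors do use in Lemma~\ref{sec3_l2}, Theorem~\ref{sec3_thm3}, and the proof of Proposition~\ref{sec3_prop5}. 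Two cosmetic remarks: the stabilizability of $(A,B)$ enters only to ensure $J^*$ is well defined (you never need it directly), and when $\mathbf{x}_0=\mathbf{0}$ the inequality $J<\mathbf{x}_0^TP\mathbf{x}_0$ degenerates to an equality $0=0$, but the conclusion $J<J^*+\epsilon$ still holds trivially since $\epsilon>0$; it is worth a half-sentence to dispose of that case.
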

\begin{table}[h!]
\caption{Range of parameters for Example \ref{ex2} with $\epsilon=1.27$}
\label{sec3_tab2}
\centering
\resizebox{\columnwidth}{!}{%
\begin{tabular}{ |c| c|c|}
\hline
\textbf{Conditions} & \textbf{Inequalities} & \textbf{Parameter-range} \\
\hline
Under study&
{$\!\begin{aligned} -2p+1-p^2 >0 \\
2\bar{p}-1-(p-\bar{p})^2 >0 \\
9\bar{p} < 5
\end{aligned}$}& {$\!\begin{aligned} p \in (-2.41, 0.41) \\
p \in (\bar{p}-\sqrt{2\bar{p}-1} , \bar{p}+\sqrt{2\bar{p}+1} ) \\
\bar{p} \in (0.50,0.56)
\end{aligned}$} \\
\hline
Existing &{$ \begin{aligned}
-2p+1-p^2 <0 \\
9p<5
\end{aligned}$} &{$\!\begin{aligned} p \in (0.41, \infty) \cup (-\infty, - 2.41) \\
{p} \in (-\infty,0.56)
\end{aligned}$}
\\
\hline
\multicolumn{3}{c}{\textbf{Parameter-range for stabilizing control law }}
\\
\hline
Under study &\multicolumn{2}{c|}{$p \in (0.22, 0.41)$} \\
\hline
Existing &\multicolumn{2}{c|}{$p \in (0.41, 0.56)$} \\
\hline
\end{tabular}
}
\end{table}

\begin{example}[Scalar Stable System]
\label{ex2}
Consider the system $\dot{x}=-x+u$; $x(0)=3$ with the cost $ { J =\int_{0}^{\infty} ( x^2 +u^2 ) dt }$. For this example, the optimal cost is $J^*=3.73$. For comparison purposes, let $\epsilon=1.27$.
\end{example}

\begin{figure}
\centering
\includegraphics[width=1\columnwidth]{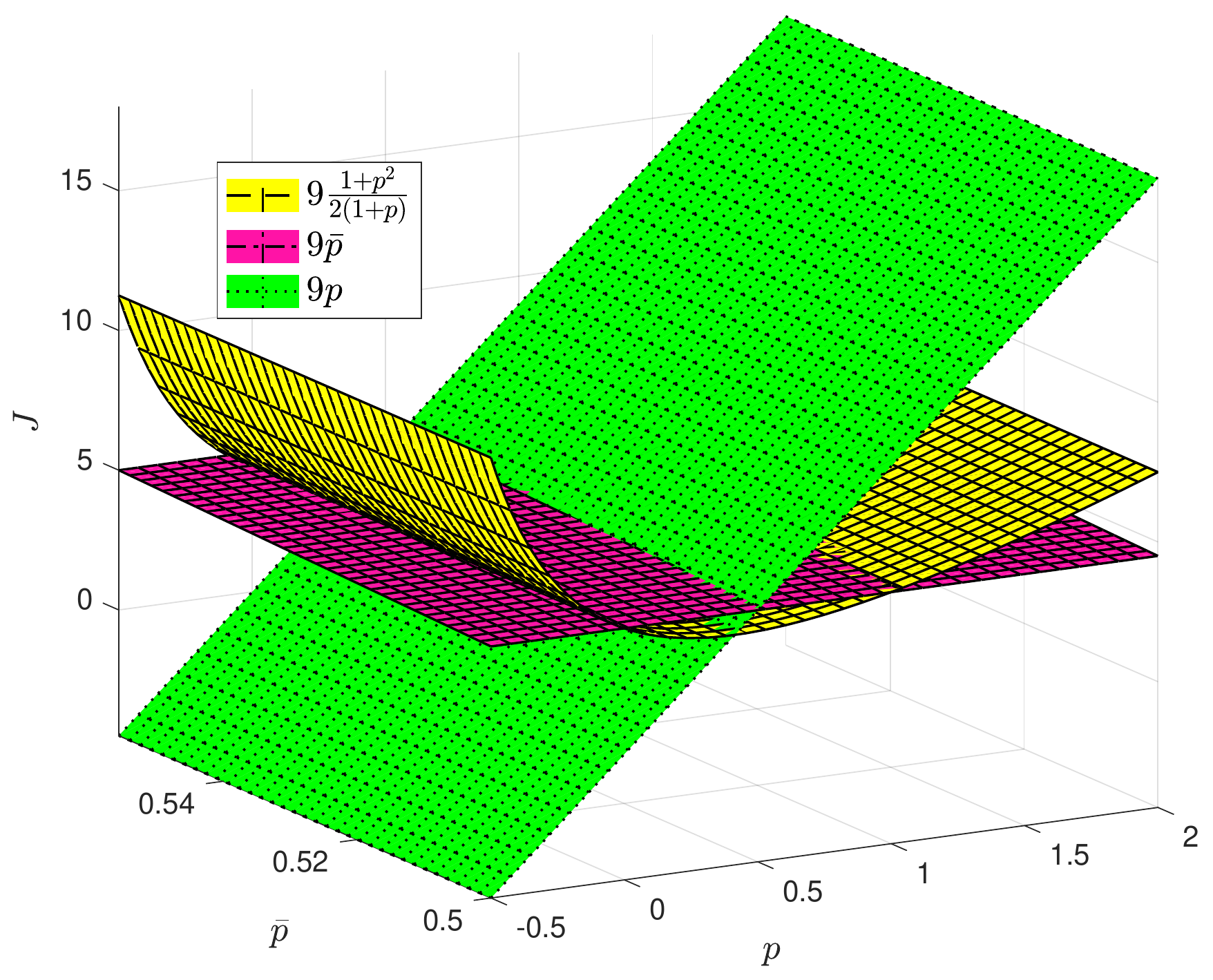}
\caption{Existing and new bounds for Example \ref{ex2}}
\label{sec3_f3}
\end{figure}
\begin{sol*}
\normalfont
The inequalities and the range of parameters $p$ and $\bar{p}$ for the conditions proposed and the existing conditions are detailed in Table \ref{sec3_tab2}. The plots of the quantities $px(0)^2$, $\bar{p}x(0)^2$ and the cost $J$ are shown in Figure \ref{sec3_f3}, where the function $px(0)^2$ is radially unbounded, which requires to specify the value of $\epsilon$ to use the existing results.
On the other hand, using the proposed approach, the bound is computed for any stabilising $p$. 
\end{sol*}
\subsection{Suboptimal Consensus Protocol Design}
\label{sec:subb}
The application of the derived conditions to the suboptimal consensus protocol design is demonstrated by examples in this subsection. An exposition with respect to the existing results is also presented. In example \ref{sec4_ex3}, suboptimal consensus protocols are designed for single-integrator agents communicating over the line topology (see Fig. \ref{sec4_f1_lt}) using the proposed method and state-of-the-art methods. Moreover, for a given consensus protocol, the degree of suboptimality is also computed. The demonstration of the proposed method is then extended to a network of second-order oscillators in example \ref{sec5_ex4}. While other network topologies are considered in examples \ref{sec5_ex5_ring} and \ref{sec4_ex6}.
Finally, application to a practical problem of consensus of rollers in the paper processing machine \cite{mosebach2013optimal} is demonstrated in example \ref{ex7}. All LMI-based problems formulated in this paper are solved using the CVX toolbox for MATLAB \cite{cvx}.

\begin{example}
\label{sec4_ex3}
Consider a four-agent system communicating over the line topology, where an agent is a single-integrator with 
dynamics
$
\dot{x}_i =u_i \ , i =  1,2,3,4
$
with $x_1(0)=0.1, x_2(0)= 0.2, x_3(0)= 0.5$ and $x_4(0)= -0.5$. Design a control law
such that consensus is achieved and compute the upper bound on the cost \eqref{sec2_ocn_cost} with $\underline{Q}=\underline{R}=I$.
\end{example}
\begin{sol*}
\normalfont
For the line topology $\tilde{A}=\mathbf{0}$, $\tilde{B}=\begin{bmatrix}
   1 & -1 &0 &0\\
   0 & 1 &-1 & 0\\
   0 &0& 1 & -1
\end{bmatrix}$ and $\tilde{Q}=\hat{R}=I_{3 \times 3}$.
\begin{enumerate}[(a)]
 

\item\textit{Solution using the Algorithm \ref{alg2}:}

The solution of the convex optimization problem \eqref{thm7cop1} is computed to be $\eta=1$ with $\hat{P}=\hat{\bar{P}}=\mathtt{diag}(0.39,0.37,0.39)$ (see Corollary \ref{cor:linetopo}),  where $\mathtt{diag}(\bullet)$ denotes the diagonal matrix. The corresponding feedback-gain matrix is given as
\begin{equation*}
\hat{K}^{\mathbf{e}}= \begin{bmatrix}
-0.39 &0 & 0\\
0.39 &-0.37 & 0\\
0 &0.37 &-0.39 \\
0 & 0 & 0.39\\
\end{bmatrix},
\end{equation*}
with
$\gamma = \mathbf{e}_0^T \left(\hat{\bar{P}}+ \eta \tilde{P}_e \right) \mathbf{e}_0=1.11 $. Thus, the consensus protocol $\hat{\mathbf{u}} = \hat{K}^\mathbf{e} {\mathbf{e}}$ is $1.11-$suboptimal. 
The cost with this control law is computed to be $\hat{J}=0.89$.
\item {\textit{Comparison with existing results in literature}:}

The design technique proposed in \cite{jiao2019suboptimality} is used for the comparison. As discussed earlier, the upper bound in their technique is \textit{a priori} specified based on which a set of initial conditions of agents is determined. Also, the consensus protocol is designed separately without using the knowledge of the upper bound.  
The eigenvalues of the Laplacian matrix are $ 0.58,2,3.41$. 
Subsequently, the following equation is solved for computing the design parameter $p>0$.
\begin{equation*}
\left(c^2(3.41)^2-2c(3.41)\right)p^2+3.41+0.001=0,
\end{equation*}
with $c= \dfrac{2}{(0.58+3.41)}=0.5$.
It gives $p=2.61$, thus yielding the feedback-gain matrix as 
\begin{equation*}
\hat{K}^{\mathbf{e}}= \begin{bmatrix}
-cp &0 & 0\\
cp &-cp & 0\\
0 &cp &-cp \\
0 & 0 & cp\\
\end{bmatrix},
\end{equation*}
where $cp=1.31$. Assuming $\gamma = 1.11$ as computed above, the initial condition of the multiagent system should satisfy 
$\|\hat{\mathbf{x}}_0\|<\sqrt{1.11/p}=0.65$. However, $\|\hat{\mathbf{x}}_0\| = 0.74$. This implies that the upper bound cannot be selected arbitrarily  in their design procedure. If the upper bound is taken as $\gamma = 1.5$, which is a loose bound in comparison to the above, then $\|\hat{\mathbf{x}}_0\|<\sqrt{1.5/p}=0.76$, which implies $\hat{J}<1.5$. The cost with this feedback gain is computed to be $\hat{J}=0.92$. It is worth noting that the actual cost still satisfies $\hat{J}<1.11$ with the given initial condition of agents.

\item \textit{Solution using the Algorithm \ref{alg3}:
}

Let $\hat{K}^{\mathbf{\hat{x}}}=-\alpha L$, where $\alpha$ is the design parameter, and for any $\alpha>0$, the multiagent system achieves consensus \cite{mesbahi2010graph}.
Using Theorem \ref{thm:x2e}, the matrix $\hat{K}^{\mathbf{e}}$ is expressed as 
\begin{equation*}
    \hat{K}^{\mathbf{e}}=  \alpha\begin{bmatrix}
        -1 &0 & 0\\
        1&-1&0\\
        0& 1&-1\\
        0&0& 1
    \end{bmatrix}.
\end{equation*}
For different values of $\alpha$, the upper bound and the actual cost are listed in Table 
\ref{sec5_b_thm8_res}.
\begin{table}[h!]
\caption{Upper-bound and cost for different values of $\alpha$ in Example \ref{sec4_ex3}}
\label{sec5_b_thm8_res}
\centering
\resizebox{0.6\columnwidth}{!}{%
\begin{tabular}{ |c| c|c|}
\hline
$\alpha$&  Upper bound, $\gamma$ & Cost, $\hat{J}$ \\
\hline
$0.1$& $2.75$&$2.70$
\\
\hline
0.2 &$1.54$&$1.43$\\
\hline
0.3 &$1.21$&$1.04$ \\
\hline
$0.4$&$1.09$ &$0.88$ \\
\hline
0.5 &$1.07$&$0.80$ \\
\hline
\end{tabular}
}
\end{table}
\end{enumerate}

\end{sol*}

\begin{example}
\label{sec5_ex4}
Consider a four-agent system where second-order oscillators communicate over the line topology. The dynamics of each agent is described by
$
    \dot{\mathbf{x}}_i=A \mathbf{x}_i+ \mathbf{b} \mathbf{u}_i, i = 1,2,3,4,
$
with 
$A=\begin{bmatrix}
0 & 1 \\
-1 & 0
\end{bmatrix}$, 
$\mathbf{b}= \mathtt{col}(0,1)
$. Also, $\mathbf{x}_1(0)= \mathtt{col}(0.35,0.15),
\mathbf{x}_2(0)= \mathtt{col}(0.26,0.48),
\mathbf{x}_3(0)= \mathtt{col}(-0.24,-0.22) 
$ 
and $\mathbf{x}_4(0)= \mathtt{col}(-0.30,-0.12) 
$. Compute a control law 
such that consensus is achieved and compute the upper bound on the cost \eqref{sec2_ocn_cost}  with $\underline{Q}=\underline{R}=I$.
 \end{example}
\begin{sol*}
\normalfont
For this case $\tilde{Q}=I_{6 \times 6}$. 
For symmetric and block-diagonal conditions on $\hat{P}$, the solution of convex optimization problem \eqref{thm7cop1} is computed to be
$\eta=1$,
\begin{align*}
\hat{P}&= \mathtt{blkdiag}(\hat{P}_{11}, \hat{P}_{22},\hat{P}_{33}),\\
\hat{\bar{P}}&=\mathtt{diag}(0.44,0.44,0.41,0.41,0.44,0.44),
\end{align*}
where $\hat{P}_{11}=\mathtt{diag}(0.43,0.44), \hat{P}_{22}=\mathtt{diag}(0.40, 0.41), \hat{P}_{33}=\mathtt{diag}(0.43,0.44)$. Subsequently, the feedback-gain matrix, upper bound, and the actual cost are computed as
\begin{align*}
    \hat{K}^{\mathbf{e}}= \begin{bmatrix}
0 &  -0.44         &0        & 0       &  0 &        0\\
   0&    0.44  &0   &-0.41 &      0 &        0 \\
         0 &        0   &0   & 0.41 &0  & -0.44 \\
         0        & 0  &       0     &    0&    0&    0.44
\end{bmatrix},
\end{align*}
$\gamma=2.26$ and $J=2.08$, respectively.
\end{sol*}

\begin{figure}
     \centering
     \begin{subfigure}[c]{0.45\columnwidth}
         \centering
         \includegraphics[width=\textwidth]{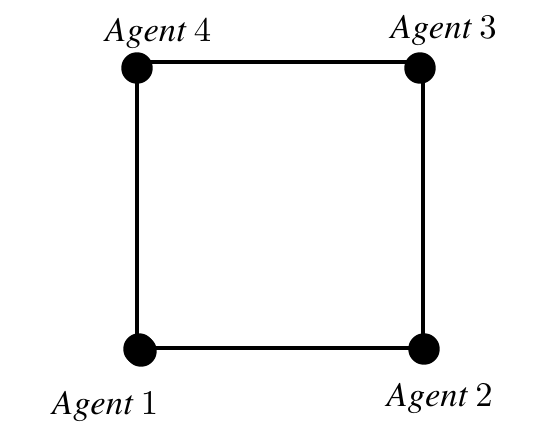}
         \caption{Ring topology}
         \label{sec4_ring}
     \end{subfigure}
     \hfill
     \begin{subfigure}[c]{0.5\columnwidth}
         \centering
         \includegraphics[width=\textwidth]{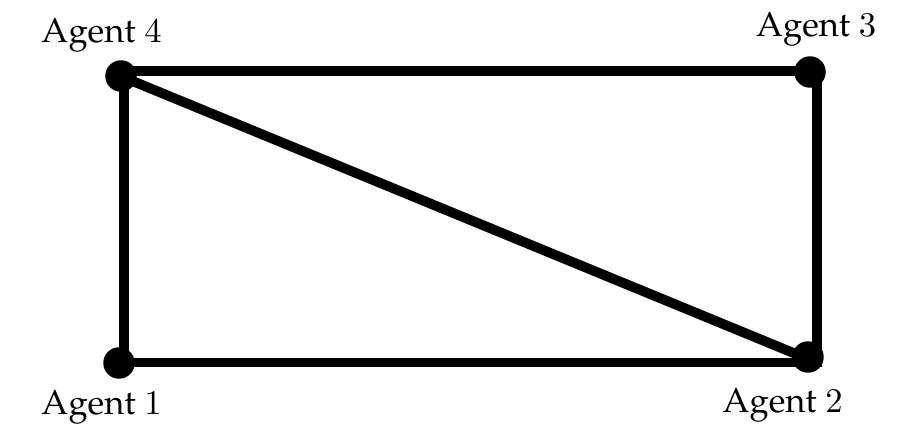}
         \caption{Topology for Example \ref{sec4_ex6}}
         \label{sec5_acomp}
     \end{subfigure}
        \caption{Network topology}
        \label{fig:topology}
\end{figure}

 \begin{example}
\label{sec5_ex5_ring}
Consider a four-agent system with single-integrator agents communicating over the ring topology, as shown in Figure \ref{sec4_ring}. The dynamics and initial condition of each agent are the same as those in example \ref{sec4_ex3}.
Let $\underline{Q}=\underline{R}=I$. Design a suboptimal consensus protocol and the degree of suboptimality. 
\end{example}

\begin{sol*}
\normalfont

To ensure that $\hat{K}^{\mathbf{e}}$ satisfies the network imposed structural requirements, the conditions on the matrix $\hat{P}$ are determined using \eqref{P_intro} as
$
\hat{P}(1,2)=\hat{P}(1,3)=\hat{P}(2,3).
$
Also, for this case $\tilde{Q} =\begin{bmatrix}
2& 1&1 \\
1&2 &1 \\
1& 1& 2
\end{bmatrix}.$
With these conditions, solving the convex optimization problem \eqref{thm7cop1} yields
$\eta=4, \hat{P}=
\begin{bmatrix}
    0.59   & 0.43&    0.43 \\
    0.43   & 0.70 &   0.43 \\
    0.43 &   0.43   & 0.59
\end{bmatrix}
$
and
$
\hat{\bar{P}}=\begin{bmatrix}
     1.01 &   0.27 &   0.16 \\
    0.27  &  1.02 &   0.27 \\
    0.16  &  0.27 &   1.01
\end{bmatrix}
$.
Correspondingly, 
the feedback-gain matrix is
$
\hat{K}^{\mathbf{e}}= \begin{bmatrix}
     -0.59 &  -0.43  & -0.43 \\ 
    0.15   &-0.27   &      0 \\
         0   & 0.27  & -0.16 \\
    0.43   & 0.43 &   0.59
\end{bmatrix}.
$
 Also, $\mathbf{e}_0^T \left({\hat{\bar{P}}}+ \eta \tilde{P}_e \right) \mathbf{e}_0=5.69$ with the actual cost computed to be $\hat{J}=1.52$.
\end{sol*}
\begin{example}
\label{sec4_ex6}
Consider a four-agent system with dynamics and initial condition of each agent similar to that in example \ref{sec4_ex3} communicating over the topology as shown in Figure \ref{sec5_acomp}.
Let $\underline{Q}=\underline{R}=I$. Design a suboptimal consensus protocol and the degree of suboptimality.
\end{example}
\begin{sol*}
\normalfont
Using \eqref{P_intro}, the following condition is imposed on the matrix $\hat{P}$ to ensure that the feedback-matrix is of the desired structure
$
\hat{P}(1,2)=\hat{P}(1,3)=\hat{P}(3,1)=\hat{P}(2,1).
$ Also, 
$
\tilde{Q}=\begin{bmatrix}
    2 &1 &1 \\
    1 & 3 & 2 \\
    1 & 2 & 3
\end{bmatrix}.
$
With these conditions, the solution to the convex optimization problem \eqref{thm7cop1} is computed to be
$\eta=5.56$ with $\hat{P}=
\begin{bmatrix}
    0.53  &  0.42 &   0.42 \\
    0.42   & 0.92 &   0.69 \\
    0.42  &  0.69  &  0.82
\end{bmatrix}
$
and
$
\hat{\bar{P}}=
\begin{bmatrix}
    1.19 &   0.31 &   0.15 \\
    0.32  &  1.28   & 0.39 \\
    0.16   & 0.39   & 1.27
\end{bmatrix}.
$ Correspondingly,
$\hat{K}^{\mathbf{e}}= \begin{bmatrix}
     -0.53 &  -0.42 &  -0.42 \\
    0.11  & -0.50 &  -0.27 \\
         0   & 0.22  & -0.12\\
    0.42  &  0.69&    0.82
\end{bmatrix}
, \gamma=\mathbf{e}_0^T \left({\hat{\bar{P}}}+ \eta \tilde{P}_e \right) \mathbf{e}_0=6.72
$ and the cost is computed to be $J=1.61$.
\end{sol*}

\begin{example}{(Roller Consensus Problem)}
\label{ex7}
\begin{figure}
    \centering
    \includegraphics[width=0.7\columnwidth]{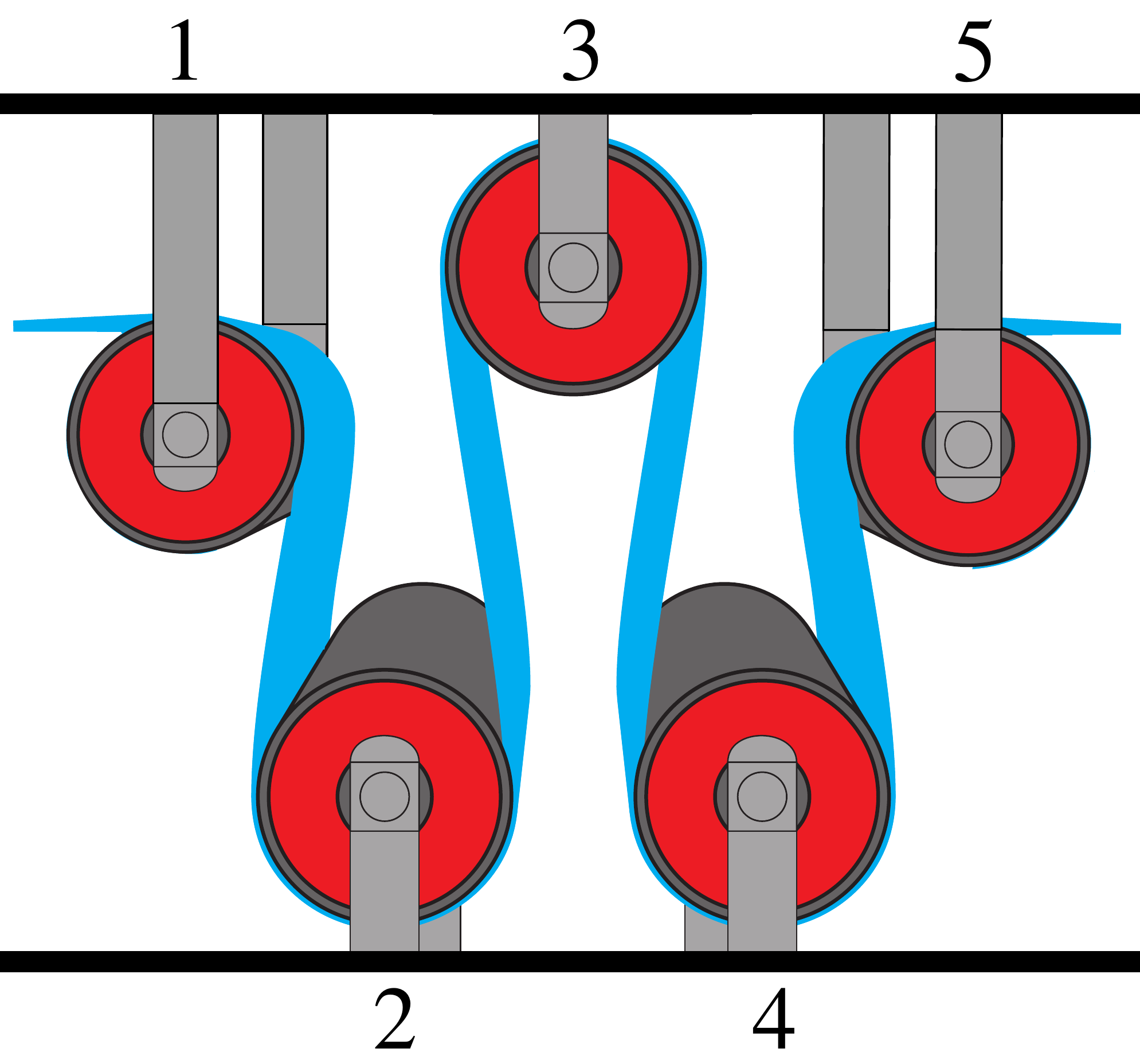}
    \caption{Drying section of a paper processing machine (adopted with modifications from \cite{mosebach2013optimal})}
    \label{fig:papermachine}
\end{figure}
Consider the drying section of the paper processing machine as shown in Figure \ref{fig:papermachine} consisting of five rollers, where roller $1$ exchanges information with roller $2$, roller $2$ with roller $1$ and roller $3$ and so on.  
The dynamics of each roller is governed by  
$
\dot{\mathbf{x}}_i= A \mathbf{x}_i+ \mathbf{b}_i \mathbf{u}_i
$, where $A = \begin{bmatrix}
0 & 1 & 0\\
0 & -0.01 & 0.2\\
0 & 0& -125
\end{bmatrix}, b_1= \mathtt{col}(0,0,20),
b_2= \mathtt{col}(0,0,18), 
b_3= \mathtt{col}(0,0,16),
b_4= \mathtt{col}(0,0,14), 
b_5=\mathtt{col}(0,0,12)
$
and initial condition as 
$
\mathbf{x}_1(0)= \begin{bmatrix}
   0.02 &
         0&
    0.01
\end{bmatrix}^T$, $ 
\mathbf{x}_2(0) =\begin{bmatrix}
           0.01&
    0.01&
   -0.01
\end{bmatrix}^T$, $
\mathbf{x}_{3}(0)=\begin{bmatrix}
     0.05 &
    0.01&
    0.01
\end{bmatrix}^T$, $
\mathbf{x}_4(0)=\begin{bmatrix}
        0.04&
    0.02&
    0.02
\end{bmatrix}^T$ \text{ and }
$\mathbf{x}_5(0)=\begin{bmatrix}
  0.07&   0&    0
\end{bmatrix}^T$.
The objective is to compute the diffusive control law such that all  states reach an agreement as $t \rightarrow \infty$ and minimize the cost functional 
$
\hat{J}=\int_{0}^{\infty}10\sum_{i =1}^{5}\sum_{j \in \mathcal{N}_i} 
\left(\mathbf{x}_i-\mathbf{x}_j \right)^T  \left(\mathbf{x}_i - \mathbf{x}_j \right)
+ 0.001 \sum_{i=1}^{5}\mathbf{u}_i^T \mathbf{u}_i  dt.
$
Also, compute the upper bound on $\hat{J}$.
\label{roller_prob}
\end{example}
\begin{sol*}
\normalfont
\begin{figure}
    \centering
    \includegraphics[width=0.7\columnwidth]{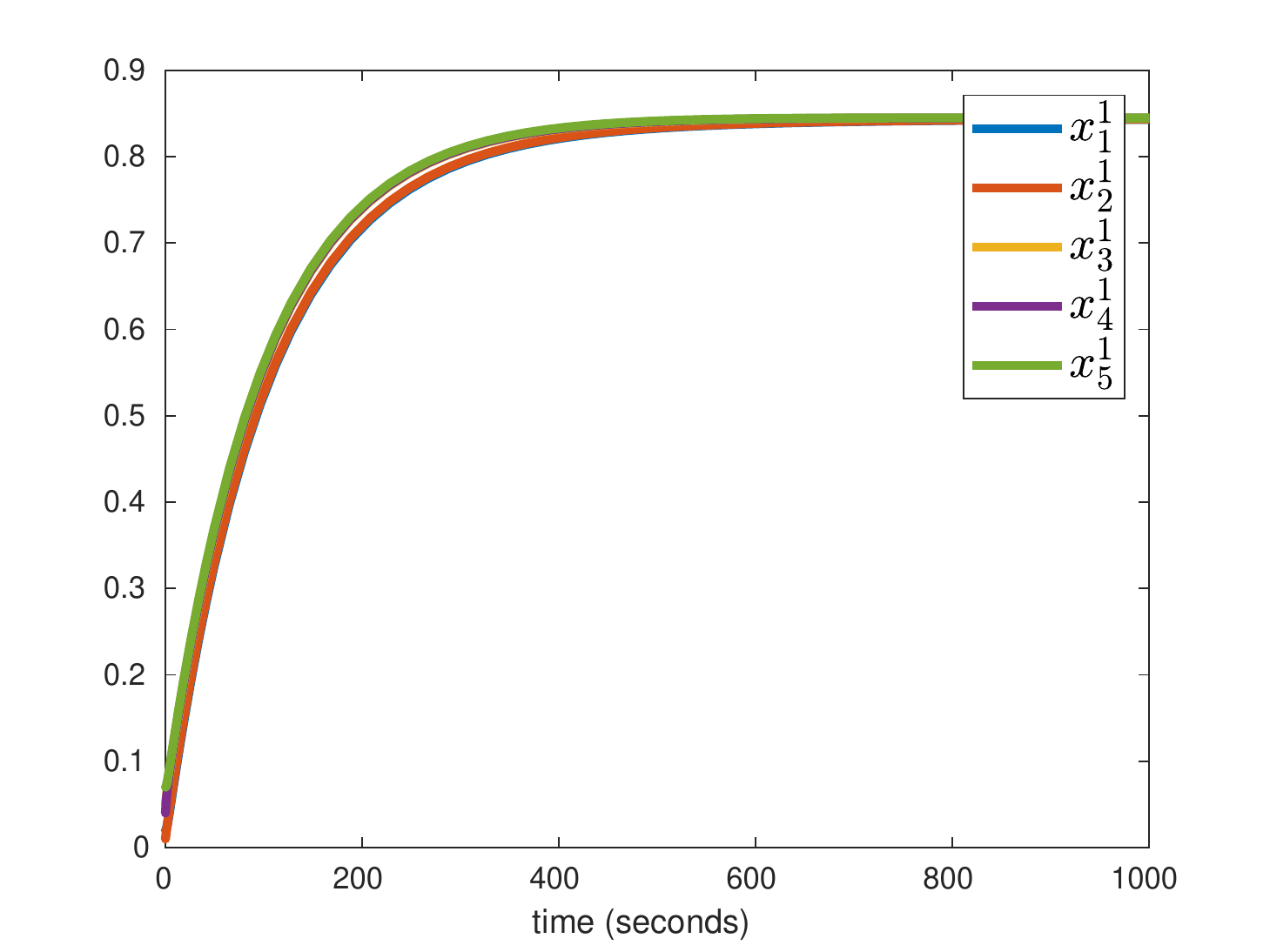}
    \caption{Consensus of first state of agents in Example \ref{roller_prob}}
    \label{fig:roller_s1}     
\end{figure}
For this problem $\tilde{Q}=10I_{12 \times 12}$. A solution to the convex optimization problem \eqref{thm7cop1} with symmetry and block-diagonal conditions on $\hat{P}$ matrix is obtained with $\eta=10$. Subsequently, the feedback-gain matrix is computed as 
given in \eqref{K_roller}. Correspondingly, $\gamma=2.49$ and cost $J=2.48$.
The closed-loop state trajectories are shown in Figures \ref{fig:roller_s1}, \ref{fig:roller_s2} and \ref{fig:roller_s3}.


\begin{figure*}[ht]
	{\small
		\begin{align}\label{K_roller}
			\hat{K}^{\mathbf{e}}= \begin{bmatrix}
				-0.48&-131.40&-32.69&0&0&0&0&0&0&0&0&0\\
				0.43&118.26&29.42&-0.43&-118.83&-28.17&0&0&0&0&0&0\\
				0&0&0&0.38&105.62&25.047&-0.44&-121.09&-28.20&0&0&0\\
				0&0&0&0&0&0&0.38&105.96&24.68&-0.51&-135.99&-30.81\\
				0&0&0&0&0&0&0&0&0&0.44&116.57&26.40\\
			\end{bmatrix}
		\end{align}
	}
\end{figure*}

\begin{figure}
    \centering
    \includegraphics[width=0.7\columnwidth]{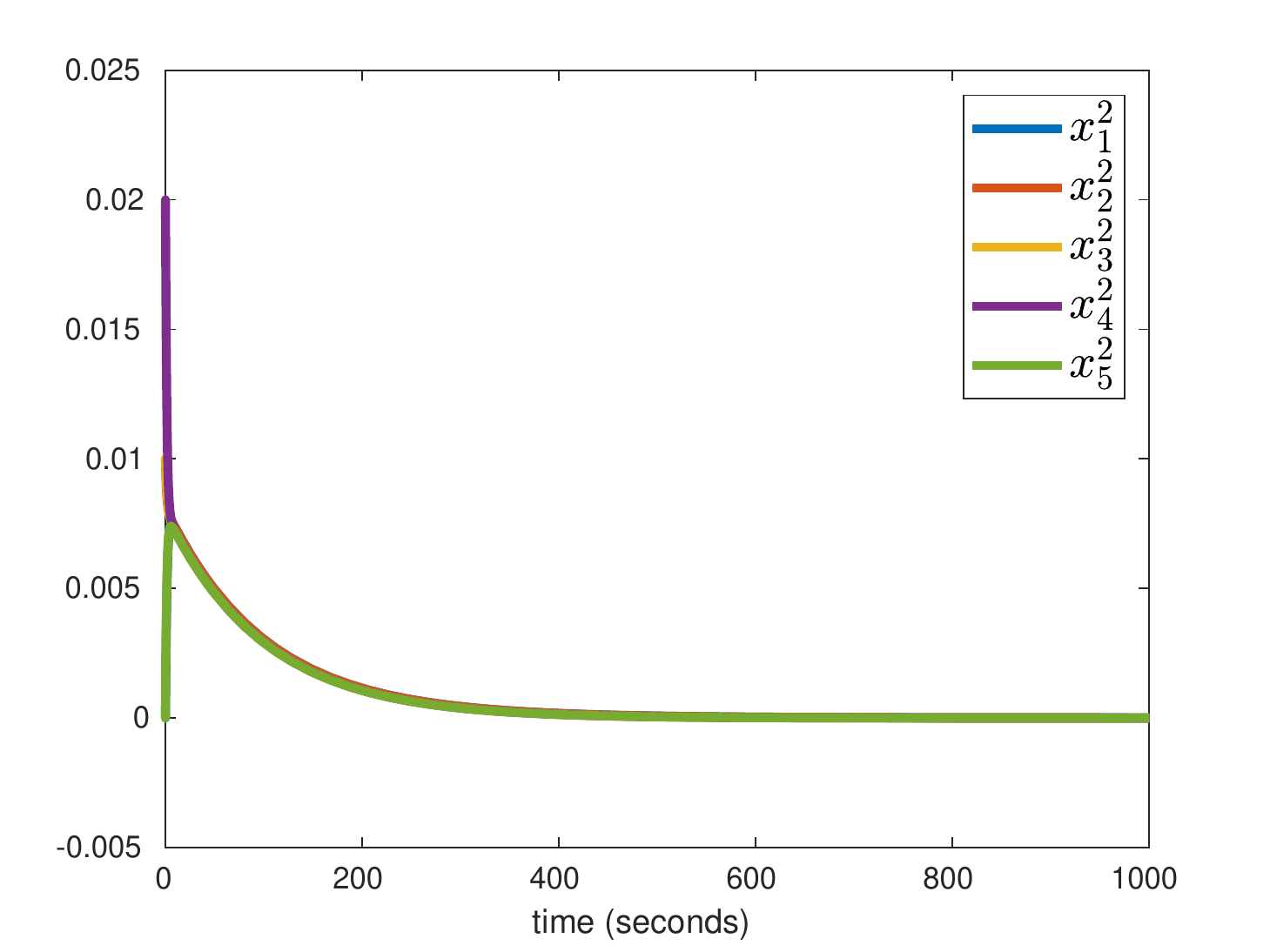}
    \caption{Consensus of second state of agents in Example \ref{roller_prob}}
    \label{fig:roller_s2}
\end{figure}

\begin{figure}
    \centering
    \includegraphics[width=0.7\columnwidth]{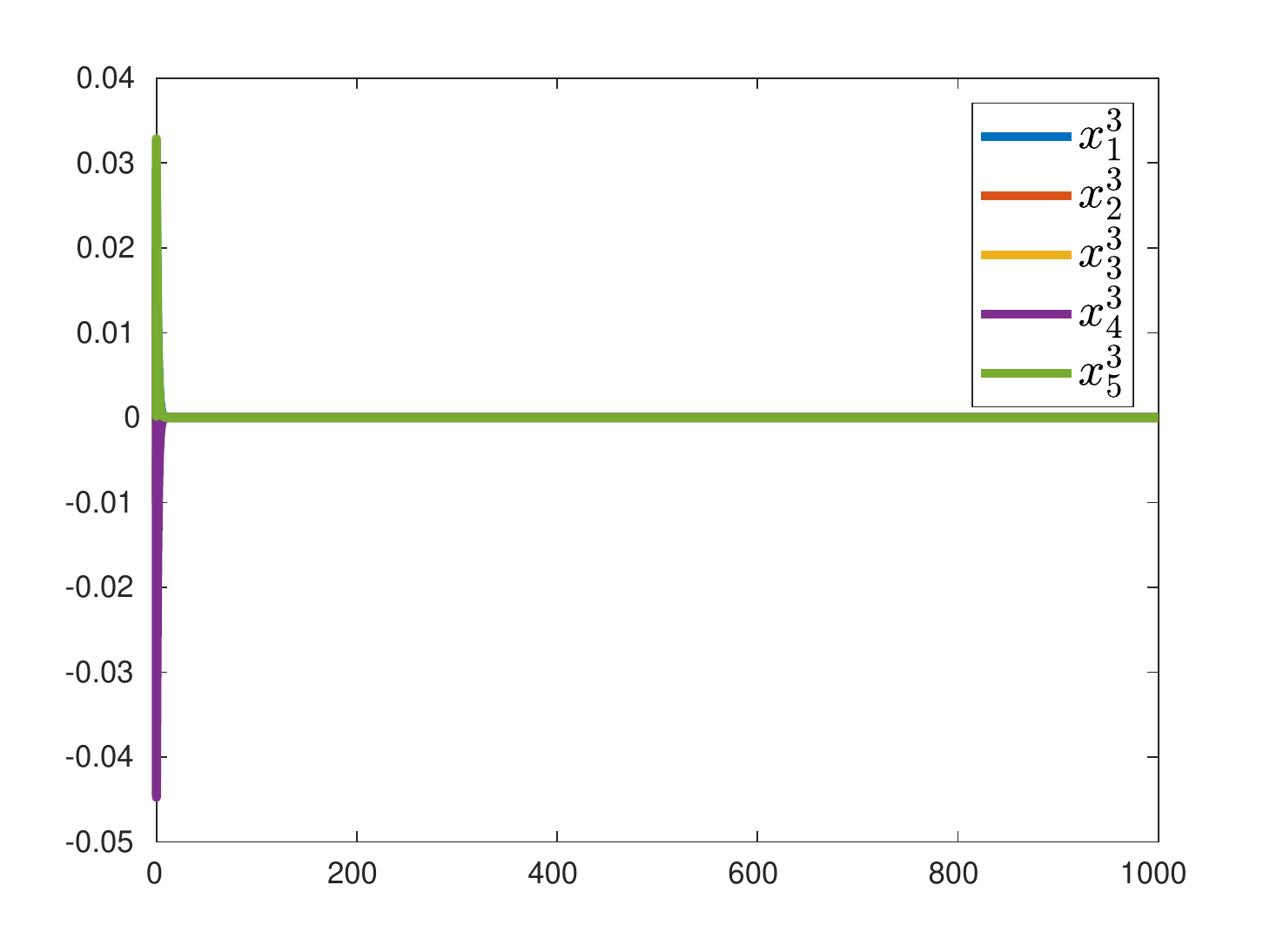}
    \caption{Consensus of third state of agents in Example \ref{roller_prob}}
    \label{fig:roller_s3}
\end{figure}

\end{sol*}
\section{Conclusion}
\label{sec6_conc}
This work presents a new design method for the suboptimal consensus protocol for   
a class of multiagent systems. The design method is derived using the Krotov sufficient conditions for global optimality. One of the striking advantages of this design method is that it also computes the upper bound on the cost functional compared to existing methods in the literature, where it is \textit{a priori} given and may render an infeasible solution. As a consequence, the degree of suboptimality with respect to the linear-quadratic cost functional is also determined for any consensus protocol. It is well-known that although the optimal control problems formulated in Krotov framework may be easier to solve, the selection of the solving functions is not trivial. Consequently, iterative algorithms are typically used to solve these problems. With the proposed solving functions, a direct solution to the above problems is obtained. Moreover, while designing suboptimal consensus protocols, we have also addressed the suboptimal LQR problem for linear systems using the Krotov sufficient conditions. A new algorithm is proposed for computing the feedback-gain matrix. This algorithm gives new insights facilitating the design of suboptimal control laws. The former problem  
is recast as an error-regulation problem, and the network-imposed structural requirements on the control input are translated on another design matrix in the appropriately formulated convex optimization problem. The solution of this convex optimization problem is then used to compute the consensus protocol and bound on the corresponding cost. The proposed approach does not assume identical gain matrices even for homogeneous agents. Several numerical examples are considered to  demonstrate the significance and usability of the proposed techniques.

Future work includes analysing the issue of computing asymmetric matrices in the formulated convex optimization problem. Furthermore, the objective function in the formulated convex optimization problem can be chosen to meet other design specifications (for example, minimization of the norm of control input) by exploiting the matrix variables available in the set defined by the linear matrix inequalities.

\bibliographystyle{ieeetr}
\bibliography{kro_ref}

\begin{IEEEbiography}
[{\includegraphics[width=1in,height=1.25in,clip,keepaspectratio]{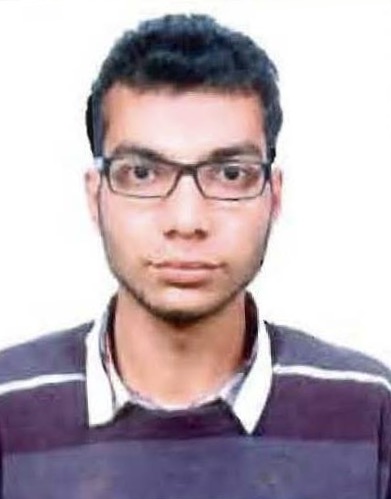}}]
{Avinash Kumar} was born in Hamirpur, Himachal Pradesh, India in 
1993. He received the B.Tech. degree in electronics and communication engineering from Rayat Institute of Engineering and Information Technology (RIEIT), Ropar affiliated to Punjab Technical University, Jalandhar, India in $2014$. He received the M.Tech. degree in electrical engineering with specialisation in signal processing and control from national institute of technology, Hamirpur, Himachal Pradesh, India in $2016$. Currently, he is a PhD scholar in the school of computing and electrical engineering (SCEE) at Indian Institute of Technology (IIT), Mandi, Himachal Pradesh, India. His research interests include optimal control, linear matrix inequalities, convex optimization, nonlinear systems and multi-agent systems.
\end{IEEEbiography}

\begin{IEEEbiography}
[{\includegraphics[width=1in,height=1.25in,clip,keepaspectratio]{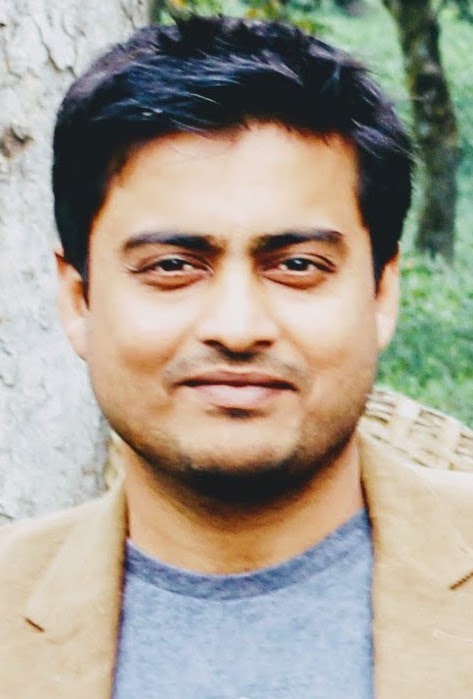}}]{Tushar Jain} received the degree of Doctor in Control, Identification and Diagnostic from Université de Lorraine, Nancy, France in 2012. He previously received the degree of M.Tech. in System modeling and control from Indian Institute of Technology (IIT) Roorkee in 2009. From 2013 to 2014 and 2014 to 2015, he was a Post-doc researcher and Academy of Finland researcher respectively in the Research Group of Process Control at Aalto University, Finland. Since 2015, he is with the School of Computing and Electrical Engineering, IIT Mandi. During these last years, his research interest has been mainly concentrated on mathematical control theory, fault-tolerant control, fault diagnosis with applications to renewable energy systems. He has received thrice the best paper award for his research work. He has authored a book entitled Active Fault-Tolerant Control Systems: A Behavioral System Theoretic Perspective.
\end{IEEEbiography}

\end{document}